\newcommand{\R}{{\mathbb{R}}}
\newcommand{\reals}{{\mathbb{R}}}
\newcommand{\Z}{{\mathbb{Z}}}
\newcommand{\integers}{{\mathbb{Z}}}
\newcommand{\Q}{{\mathbb{Q}}}
\newcommand{\rationals}{{\mathbb{Q}}}
\newcommand{\K}{{\mathbb{K}}}
\newcommand{\PP}{{\mathbb{P}}}
\newcommand{\Su}{{\mathbf{S}}}
\renewcommand{\S}{\Su}  
\newcommand{\M}{{\mathbf{M}}}
\DeclarePairedDelimiter\ceil{\lceil}{\rceil}
\DeclarePairedDelimiter\floor{\lfloor}{\rfloor}
\newcommand{\newword}[1]{\textbf{\textit{#1}}}
\newcommand{\g}{{\mathbf{g}}}
\renewcommand{\b}{{\mathbf{b}}}
\newcommand{\x}{{\mathbf{x}}}
\newcommand{\y}{{\mathbf{y}}}
\newcommand{\Sp}{{\mathbb{S}}}
\newcommand{\T}{{\mathbb{T}}}
\newcommand{\F}{{\mathcal{F}}}
\newcommand{\A}{{\mathcal{A}}}
\newcommand{\set}[1]{{\lbrace#1\rbrace}}
\newcommand{\arc}{\operatorname{arc}}
\newcommand{\curve}{\operatorname{curve}}
\newcommand{\univ}{{\operatorname{univ}}}
\newcommand{\notch}{^{\scalebox{0.6}{$\mathrel \blacktriangleright \joinrel \mathrel \blacktriangleleft$}}}
\newcommand{\br}[1]{{\langle #1 \rangle}}
\renewcommand{\r}{\mathtt{r}}
\renewcommand{\t}{\mathtt{t}}
\newcommand{\w}{{w}}
\newcommand{\cT}{\mathcal{T}}
\newcommand{\bullets}{\begin{itemize}}
\newcommand{\stellub}{\end{itemize}}
\newtheorem{theorem}{Theorem}[section]
\newtheorem{corollary}[theorem]{Corollary}
\newtheorem{lemma}[theorem]{Lemma}
\newtheorem{proposition}[theorem]{Proposition}
\newtheorem{conjecture}[theorem]{Conjecture}
\theoremstyle{definition}
\newtheorem{definition}[theorem]{Definition}
\newtheorem{remark}[theorem]{Remark}
\newtheorem{example}[theorem]{Example}
\newcommand{\margincolor}{red}      
\definecolor{darkgreen}{rgb}{0,0.7,0}
\newcounter{margincounter}
\newcommand{\marginnum}{
\ifnum\value{margincounter}<10
\textcolor{\margincolor}{\begin{picture}(0,0)\put(2.2,2.4){\circle{9}}\end{picture}\footnotesize\arabic{margincounter}}
\else\ifnum\value{margincounter}<100
\textcolor{\margincolor}{\begin{picture}(0,0)\put(4.256,2.5){\circle{11}}\end{picture}\footnotesize\arabic{margincounter}}
\else
\textcolor{\margincolor}{\begin{picture}(0,0)\put(6.8,2.5){\circle{14}}\end{picture}\footnotesize\arabic{margincounter}}
\fi\fi
}
\thanks{Barnard, Meehan, and Viel were supported in part by NSF grants DMS-0943855 and CCF-1017217.
Reading was supported in part by NSF grant DMS-1101568.} 
\author[Barnard \and Meehan \and Reading \and Viel ]{Emily Barnard \and Emily Meehan \and Nathan Reading \and Shira Viel } 
\title[Coefficients for the four-punctured sphere]{Universal geometric coefficients for the four-punctured sphere}
\begin{document}
\maketitle
\begin{abstract}
We construct universal geometric coefficients for the cluster algebra associated to the four-punctured sphere and obtain, as a by-product, the $\g$-vectors of cluster variables.
We also construct the rational part of the mutation fan.
These constructions rely on a classification of the allowable curves (the curves which can appear in quasi-laminations).
The classification allows us to prove the Null Tangle Property for the four-punctured sphere, thus adding this surface to a short list of surfaces for which this property is known.
The Null Tangle Property then implies that the shear coordinates of allowable curves are the universal coefficients.
We compute shear coordinates explicitly to obtain universal geometric coefficients.
 \end{abstract}

\setcounter{tocdepth}{2}
\tableofcontents

\section{Introduction}\label{sec: introduction}
The fundamental combinatorial datum specifying a cluster algebra is an \newword{exchange matrix}: an $n\times n$ skew-symmetrizable (and in the special case considered here, skew-symmetric) integer matrix $B$.
The remaining data specifying a cluster algebra can be thought of as a choice of coefficients.
We consider the broad class of cluster algebras of geometric type, in which the choice of coefficients is a collection of vectors (\newword{geometric coefficients}), each with $n$ entries.
The usual definition of cluster algebras of geometric type is \cite[Definition~2.12]{ca4}, but we work with the broader definition given in \cite[Definition~2.8]{universal}, reviewed below in Section~\ref{cluster sec}.
The relationship between the two definitions is described in \cite[Remark~2.9]{universal}.

Universal geometric coefficients are a choice of coefficients that are universal in the sense that any other choice of geometric coefficients can be obtained from the universal geometric coefficients by coefficient specialization.
Constructing geometric coefficients is equivalent to constructing a basis, in the sense of \newword{mutation-linear algebra} with respect to $B$.
That is, we find a collection of vectors that is independent and spanning in a certain strong sense.
Closely related to this notion of basis is the \newword{mutation fan} $\F_B$, a complete fan that encodes the combinatorics and geometry of \newword{mutations} of $B$.
In particular, in many cases one can read off this basis from the rays of $\F_B$.

Certain cluster algebras are modeled by the combinatorics/geometry/topology of triangulated surfaces.
The surfaces model was advanced in \cite{fst, ft}, building on earlier work in \cite{gsv, fg1, fg2}.
More recently, explicit combinatorial formulas for the cluster variables in such algebras were given in \cite{msw}.
The goal of this paper is to construct universal geometric coefficients and describe the mutation fan for the exchange matrix $B$ encoded by a certain triangulation $T_0$ of a $2$-sphere. 
The triangulation $T_0$, a combinatorial tetrahedron, is shown in Figure~\ref{tri and mat}, 
along with its signed adjacency matrix $B=B(T_0)$.
\begin{figure}\centering
\scalebox{1.05}{
\raisebox{-30pt}{\includegraphics{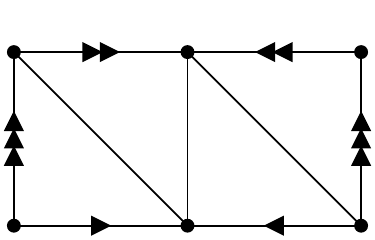}
\begin{picture}(0,0)(106,0)
\put(20,-6){\small$\gamma_1$}
\put(71,-6){\small$\gamma_1$}
\put(-14,27){\small$\gamma_2$}
\put(103,27){\small$\gamma_2$}
\put(17,23){\small$\gamma_6$}
\put(20,61){\small$\gamma_4$}
\put(71,61){\small$\gamma_4$}
\put(39,30){\small$\gamma_5$}
\put(67,23){\small$\gamma_3$}
\end{picture}}}
\qquad\quad
\raisebox{-1pt}{$
\begin{bmatrix*}[r]
0 & 1 & -1 & 0 & 1 & -1 \\
-1 & 0 & 1 & -1 & 0 & 1 \\
1 & -1 & 0 & 1 & -1 & 0 \\
0 & 1 & -1 & 0 & 1 & -1 \\
-1 & 0 & 1 & -1 & 0 & 1 \\
1 & -1 & 0 & 1 & -1 & 0
\end{bmatrix*}
$}
\caption{A triangulation $T_0$ of the four-punctured sphere and its signed adjacency matrix}
\label{tri and mat}
\end{figure}
(See Definition~\ref{def: signed adj}.)
Arrows in the picture indicate edge-identifications, and the arcs defining edges of $T_0$ are labeled $\gamma_i$.
We write $B$ with rows and columns indexed by $\gamma_1,\gamma_2,\gamma_3,\gamma_4,\gamma_5,\gamma_6$ in that order.
The four vertices of the triangulation are called punctures, because cluster algebras associated to $B$
can be understood in terms of the geometry of the four-punctured sphere, with $T_0$ taken to be an \emph{ideal} triangulation.
Universal coefficients for exchange matrices arising from triangulated surfaces are considered in general in \cite{unisurface}, 
and computed in the particular case of the once-punctured torus in \cite{unitorus}. 
The exchange matrix associated to the four-punctured sphere is the smallest example not yet addressed.

Geometric coefficients are encoded by the interactions between two closely related classes of curves: tagged arcs and allowable curves.
Both classes are equipped with a notion of compatibility.
Seeds in the cluster algebra correspond to tagged triangulations---maximal collections of pairwise compatible tagged arcs.
One example is the triangulation shown in Figure~\ref{tri and mat}.
The intersections between an allowable curve $\lambda$ and the arcs in a tagged triangulation $T$ are encoded in a vector $\b(T,\lambda)$ called the shear coordinates or shear coordinate vector of $\lambda$ with respect to $T$.
The rational quasi-lamination fan $\F_\rationals(T)$ is the fan whose cones are the nonnegative spans of the shear coordinate vectors of collections of pairwise compatible allowable curves with respect to $T$. 
Assuming a property of the surface called the Null Tangle Property, universal coefficients are the shear coordinates of allowable curves and the rational cones in the mutation fan $\F_{B(T)}$ coincide with the  
cones of $\F_\rationals(T)$. Our main results are the following theorems.
\begin{theorem}\label{sphere NTP}
The four-punctured sphere has the Null Tangle Property.
\end{theorem}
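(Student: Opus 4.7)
The plan is to proceed by contradiction: suppose a nontrivial null tangle $\Xi = \sum_i c_i \lambda_i$ exists, where the $\lambda_i$ are pairwise distinct allowable curves and not all coefficients $c_i \in \Z$ vanish. By definition, $\sum_i c_i \b(T,\lambda_i) = 0$ for \emph{every} tagged triangulation $T$ of the four-punctured sphere, so the strategy is to exploit this hypothesis across a carefully chosen family of triangulations obtained by mutation from $T_0$ in order to force each $c_i$ to be zero.

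The first step is to invoke the classification of allowable curves promised in the abstract. This should reduce the problem to a manageable list of curve families: tagged arcs between punctures (with the four possible tagging patterns at the two endpoints) and essential simple closed curves, which on the four-punctured sphere fall into three mapping-class-group orbits indexed by the partitions of the four punctures into two pairs. Because $\Xi$ has finite support, the geometric intersection numbers of the $\lambda_i$ with each arc of $T_0$ are uniformly bounded, which in turn bounds the combinatorial complexity of the curves that can appear. For each candidate $\lambda$ in $\Xi$, I would then construct a tagged triangulation $T_\lambda$ (obtained by a finite sequence of mutations from $T_0$) together with a coordinate functional such that $\b(T_\lambda, \lambda)$ is linearly independent from $\b(T_\lambda, \lambda')$ for every other candidate $\lambda'$; vanishing of the shear coordinate vector of $\Xi$ at $T_\lambda$ then forces $c_\lambda = 0$. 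The three-fold symmetry of the sphere under its mapping class group, together with the invariance of the Null Tangle Property under the mapping class group action, should compress the case analysis substantially.

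The main obstacle, I expect, will be the closed-curve contribution to $\Xi$. Closed curves and certain combinations of notched arcs can produce identical shear coordinates on a given triangulation, so simply reading off coefficients at $T_0$ is not enough; one must use mutation sequences that break this coincidence. Moreover, shear coordinates of closed curves record only signed intersections with a triangulation, which makes distinguishing two closed curves in different mapping-class-group orbits delicate when their algebraic intersection patterns with $T_0$ happen to agree. My expectation is that the proof will proceed by first using symmetry to reduce the closed-curve families to a single representative, then inducting on the maximum intersection number of the supporting curves with a cleverly chosen edge, each step producing either a cancellation that shrinks the support of $\Xi$ or a mutation that exposes a nonzero coordinate, thereby contradicting the null tangle hypothesis.
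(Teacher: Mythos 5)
Your proposal shares the broad outline of the paper's proof — isolate and kill the weights of non-closed curves first, then tackle the closed curves — but it is missing the key idea that makes the second half work, and the criterion proposed for the first half is not quite the right one.

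For the non-closed curves, the paper does not seek a triangulation $T_\lambda$ at which $\b(T_\lambda,\lambda)$ is \emph{linearly independent} of the other shear coordinate vectors. Instead, it invokes a ``separation'' result (Proposition~\ref{separation}, quoted from \cite{unisurface}): if there is a tagged arc $\gamma$ in some triangulation $T$ with $b_\gamma(T,\lambda)>0$ while $b_\gamma(T,\nu)\le 0$ for every other $\nu$ in the tangle, then $w_\lambda=0$. The triangulation is then built from a carefully chosen Farey-1 triple (Lemma~\ref{Farey_triple}) so that $\lambda$'s slope sits at the boundary of the triple and only $\lambda$ can produce a positive coordinate at the distinguished arc. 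Your linear-independence criterion would not suffice — a positive shear coordinate for $\lambda$ can coexist with positive coordinates for other curves even when the vectors are independent, and the null-tangle hypothesis only gives a single vanishing linear combination per triangulation, not injectivity.

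The genuine gap is the closed-curve case. Your ``induction on maximum intersection number'' and ``reduce to a single representative by MCG symmetry'' are both vague, and neither can be made to work as stated: the shear coordinates of all closed curves, at any fixed triangulation, lie in a two-dimensional plane (the vectors $[-b,a,b-a,-b,a,b-a]$), so no triangulation can linearly isolate a closed curve once three or more are present, and the MCG cannot simultaneously normalize several curves appearing together in a tangle. The paper's resolution is the covering-space reduction: a closed-curve tangle in $\Sp-4$ is transported to a tangle in the once-punctured torus $\T-1$ via the covering maps of Section~\ref{covering sec}, and Proposition~\ref{sphere_to_torus} shows that shear coordinates of the lifted closed curves with respect to matching triangulations are literally a projection of the original ones. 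Hence the lifted tangle is null in $\T-1$, and the Null Tangle Property for the once-punctured torus (already established in \cite{unitorus}) finishes the argument. Without this reduction, your plan has no mechanism for killing the closed-curve weights, which you yourself flagged as the hard part.
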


\begin{theorem}\label{unisphere thm}
Let $B$ be the exchange matrix shown in Figure~\ref{tri and mat}, arising from a triangulation of the four-punctured sphere.
Then the universal geometric coefficients for $B$ (over $\integers$ or $\rationals$) consist of all coordinate permutations by the group $\br{(14),\, (25),\, (36),\,  (123)(456)}$ of the vectors listed below, with $b/a$ varying over all standard forms of rational slopes with $0 < \frac{b}{a}\le\infty$.  
\[\begin{array}{ll}
1. &
\begin{bmatrix} -\floor*{\frac{b-1}{2}},& \floor*{\frac{a}{2}}+1,& \floor*{\frac{b-a}{2}},& -\floor*{\frac{b}{2}},& \floor*{\frac{a+1}{2}},& \floor*{\frac{b-a-1}{2}}  \end{bmatrix}
\\[4pt]
2. &
\begin{bmatrix} -\floor*{\frac{a}{2}}-1,& \floor*{\frac{b-1}{2}}, &\floor*{\frac{a-b+1}{2}},& -\floor*{\frac{a+1}{2}},& \floor*{\frac{b}{2}},& \floor*{\frac{a-b}{2}}+1  \end{bmatrix}
\\[4pt]
3. &
\begin{bmatrix}  -\floor*{\frac{b}{2}}, \floor*{\frac{a+1}{2}}, \floor*{\frac{b-a+1}{2}}, -\floor*{\frac{b+1}{2}}, \floor*{\frac{a}{2}}, \floor*{\frac{b-a}{2}}   \end{bmatrix}
\\[4pt]
4. &
\begin{bmatrix} -b,& a,& b-a,& -b,& a,& b-a \end{bmatrix}
\end{array}\]
\end{theorem}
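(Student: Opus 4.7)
The plan is to combine the Null Tangle Property (Theorem~\ref{sphere NTP}) with the general framework of \cite{unisurface} to reduce the statement to a concrete shear-coordinate computation, and then to carry out that computation case by case, exploiting the symmetries of $T_0$.

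By \cite{unisurface}, the Null Tangle Property for a surface implies that, for any exchange matrix $B(T)$ arising from a tagged triangulation $T$, the universal geometric coefficients are exactly the shear coordinate vectors $\b(T,\lambda)$ as $\lambda$ ranges over the allowable curves (up to the obvious rescaling). Combining this with Theorem~\ref{sphere NTP} reduces the theorem to two tasks: (i) listing the allowable curves on the four-punctured sphere up to the symmetries of $T_0$, and (ii) computing $\b(T_0,\lambda)$ for one representative of each orbit, as a function of the slope parameter $b/a$.

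For task (i), I use the classification of allowable curves already established earlier in the paper (the same classification that drives the proof of Theorem~\ref{sphere NTP}). The combinatorial tetrahedron $T_0$ has a symmetry group generated by $180^\circ$ rotations through pairs of opposite edges and a $120^\circ$ rotation fixing a vertex; this group acts on the arcs $\gamma_1,\ldots,\gamma_6$ as the permutation group $\br{(14),\,(25),\,(36),\,(123)(456)}$, so up to coordinate permutation it is enough to list one representative of each orbit. I expect the four items in the theorem to index the four orbit types: item~$4$ is the single family of essential closed curves (one per slope $b/a$), whose shear coordinates are linear in $a$ and $b$, and items~$1$--$3$ are three families of tagged arcs whose differing parity behavior at endpoints produces the floor functions. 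Standard-form rational slopes appear because curves on the four-punctured sphere (viewed as hyperelliptic images of lines of rational slope on the double-cover torus) are parameterized by $\Q\cup\{\infty\}$.

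For task (ii), the cleanest setup is the universal cover: lifting $T_0$ via the torus double cover of the four-punctured sphere gives a periodic triangulation of the plane, and every allowable curve $\lambda$ of slope $b/a$ lifts to a family of parallel lines of that slope. Each entry of $\b(T_0,\lambda)$ is then a signed count of crossings between $\lambda$ and a lift of the corresponding arc in a fixed fundamental domain, and the definition of shear coordinates turns this into a step function in $a$ and $b$ that jumps each time the lifted line passes a puncture. Tabulating these jumps produces expressions of the form $\floor*{(a\pm 1)/2}$, $\floor*{(b\pm 1)/2}$, and $\floor*{(b-a\pm 1)/2}$, which assemble exactly into the four formulas in the theorem. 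The item~$4$ formula drops out of the same computation because a closed curve lifts to two parallel lines (one in each preimage of the hyperelliptic involution) and needs no puncture-endpoint correction.

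The main obstacle is the bookkeeping in task (ii): the shear coordinate at a puncture where $\lambda$ has an arc endpoint depends delicately on the tagging there and on the parities of $a$ and $b$, so the computation must be done with a definite choice of fundamental domain, a consistent sign convention, and explicit care for each of the four orbit representatives, which differ precisely in how their endpoints sit relative to the punctures. A secondary check, which I expect to be more bookkeeping than difficulty, is that the $\br{(14),(25),(36),(123)(456)}$-orbits of the four families cover every allowable curve without repetition beyond the $(a,b)\mapsto(ka,kb)$ rescaling that is absorbed into the standard-form convention.
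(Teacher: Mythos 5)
Your proposal follows essentially the same route as the paper: reduce via the Null Tangle Property and the results of \cite{unisurface} to computing shear coordinates of allowable curves, classify those curves via the covering by $\reals^2-\integers^2$ (with the torus double cover as an intermediate stage), compute shear coordinates by tracking signed crossings of lifted lines with the lifted triangulation, and then use the symmetry group acting on coordinates to list all vectors from a few orbit representatives. The paper implements the crossing-count step via an explicit word $\w(\lambda)$ in letters $\r,\t$ (Lemmas~\ref{lem: horiz. and vert. shear} and~\ref{lem: shear diagonals}) and then uses plane symmetries (reflection about $y=x$, translations by $[0,1],[1,0],[1,1]$, and linear maps $[a,b]\mapsto[a+b,-a]$ and $[a,b]\mapsto[b,-a-b]$) to generate the coordinate permutations, which is the precise mechanism behind your looser ``symmetry group of the tetrahedron'' description. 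One small imprecision in your sketch: shear coordinates are not computed by restricting to a fundamental domain; rather one takes a single lift of $\lambda$ (a whole line or a line segment) and sums signed contributions from all its crossings with lifts of $\gamma_i$ throughout the plane, the sum being finite because contributions vanish away from the lift. You also acknowledge that the parity bookkeeping at spiral points is the hard part and haven't carried it out; the paper's Proposition~\ref{prop: positive shear} together with Theorem~\ref{thm: shear} and Proposition~\ref{same list} is precisely that bookkeeping. So: same approach, correct at the level of strategy, details deferred.
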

Theorem~\ref{unisphere thm} has been stated for maximum simplicity, but does not list the universal geometric coefficients with maximum efficiency, because each vector listed has a stabilizer, under the permutation group $\br{(14),\, (25),\, (36),\,  (123)(456)}$, varying in size from $2$ to $8$.
Thus, while the group has $24$ elements, it produces only $6$ distinct permutations of the vectors arising from Item $1$ in the theorem and only $6$, $12$, and $3$ permutations, respectively, of the vectors arising from Items $2$--$4$. 
We list the vectors in a more complicated but more efficient way when we compute shear coordinates in Theorem~\ref{thm: shear}.
One can verify (for example, by modifying the easy argument in the last paragraph of the proof of \cite[Proposition~5.1]{unitorus}) that the vectors arising from Item 4 in the theorem are exactly the integer vectors in the plane given by the equations $x_1+x_2+x_3=0$, $x_1=x_4$, $x_2=x_5$, and $x_3=x_6$ whose entries have no common integer factors. 

To prove Theorems~\ref{sphere NTP} and~\ref{unisphere thm}, we classify tagged arcs and describe pairwise compatibility of tagged arcs.
The tagged arcs correspond bijectively to non-closed allowable curves, so we use the classification of tagged arcs to classify non-closed allowable curves and describe their compatibility. 
We then separately classify closed allowable curves and extend the description of compatibility.  
As an aside to the proofs of Theorems~\ref{sphere NTP} and~\ref{unisphere thm}, we also characterize tagged triangulations of the four-punctured sphere.

\begin{remark}\label{much appears}
Much of this curve/triangulation classification appears in the literature.
In \cite{FloydHatcher}, Floyd and Hatcher describe isotopy classes of arcs in the four-punctured sphere and describe maximal collections of non-intersecting curves. 
In \cite{tubular}, Barot and Geiss parametrize arcs in the four-punctured sphere by a rational slope and by the endpoints of the arcs.
Using this description of arcs, they then characterize tagged arc compatibility.
Also, \cite[Proposition~2.1]{KeenSeries} gives a parametrization of closed curves that coincides with ours, and describes that parametrization as well-known.  
(See also \cite{KomoriSeries}.)
Despite this literature, we give a different proof of the classification of arcs here, because it is essential for us to relate arcs in the four-punctured sphere with their lifts in the plane with respect to a certain covering map that will be described in Section~\ref{covering sec}.
The classification of closed curves from \cite{KeenSeries} already uses the covering map, but we give a proof here because the complete details are  not given in \cite{KeenSeries}, and because it is an easy consequence of the classification of arcs.
\end{remark}

Finally, we determine the shear coordinates of allowable curves.
Having explicit shear coordinates allows us to prove the Null Tangle Property (Theorem~\ref{sphere NTP}).
That property implies that our explicit shear coordinates are universal coefficients over~$\integers$ and $\rationals$, and we obtain Theorem~\ref{unisphere thm}.

Explicit shear coordinates and the description of compatibility among allowable curves lead to an explicit description of the rational part of the mutation fan for the four-punctured sphere.
(This is a fan in $\reals^6$.)
The \newword{$\g$-vector fan} for the cluster algebra associated to $B^T$ appears explicitly as a subfan of the mutation fan.  
Specifically, the $\g$-vectors of cluster variables for $B^T$ are written in terms of the shear coordinates of allowable non-closed curves and each cone of the $\g$-vector fan is given by the nonnegative linear span of a set of pairwise compatible allowable non-closed curves \cite[Proposition 5.2]{unisurface}.  
As a result, we obtain the following theorem.

\begin{theorem}\label{sphere g thm}
Let $B$ be the matrix of Theorem~\ref{unisphere thm}.
Then the $\g$-vectors of cluster variables associated to the transpose $B^T$ are the vectors described in Items $1$--$3$ of Theorem~\ref{unisphere thm}.
\end{theorem}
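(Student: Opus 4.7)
The plan is to derive Theorem~\ref{sphere g thm} as a direct consequence of Theorem~\ref{unisphere thm} (in particular, the identification of the universal geometric coefficients with shear coordinates of allowable curves) together with \cite[Proposition~5.2]{unisurface}. The cited proposition asserts that, once the surface satisfies the Null Tangle Property, the $\g$-vectors of cluster variables for $B^T$ are exactly the shear coordinates of the non-closed allowable curves with respect to $T_0$, and each cone of the $\g$-vector fan appears as the nonnegative linear span of the shear coordinates of a pairwise compatible collection of such curves. Since Theorem~\ref{sphere NTP} establishes the Null Tangle Property for the four-punctured sphere, this proposition applies directly in our situation.

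The substantive task, then, is to separate the list of vectors appearing in Theorem~\ref{unisphere thm} into those arising from non-closed allowable curves and those arising from closed ones. My plan is to appeal to the explicit shear coordinate computation in Theorem~\ref{thm: shear}: the vectors in Items 1--3, which are governed by a standard form rational slope $b/a$ and written in terms of floor functions, are precisely the shear coordinates of tagged arcs (the non-closed allowable curves), while the vectors in Item 4 record the shear coordinates of the closed allowable curves. The latter identification is corroborated by the observation immediately after Theorem~\ref{unisphere thm} that the Item 4 vectors are exactly the primitive integer vectors in the three-dimensional subspace cut out by $x_1=x_4$, $x_2=x_5$, $x_3=x_6$, and $x_1+x_2+x_3=0$, which is the expected parameter space for isotopy classes of essential closed curves equipped with a primitivity condition, not for tagged arcs.

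Once this separation is made, Theorem~\ref{sphere g thm} follows immediately by retaining only those shear coordinate vectors that come from non-closed allowable curves, namely the vectors of Items 1--3 together with all their coordinate permutations under the group $\br{(14),\, (25),\, (36),\, (123)(456)}$. There is essentially no obstacle in this final step; all the real work has already been absorbed into the proofs of Theorem~\ref{sphere NTP}, Theorem~\ref{unisphere thm}, the curve classification, and the explicit shear coordinate formulas carried out elsewhere in the paper. The only thing left to verify is that the permutation group action preserves the non-closed/closed distinction, which is clear because the group is induced by the symmetries of $T_0$ and thus sends tagged arcs to tagged arcs and closed curves to closed curves.
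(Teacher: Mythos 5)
Your proposal is correct and matches the paper's own reasoning, which (in the paragraph preceding the theorem in the introduction) derives Theorem~\ref{sphere g thm} by citing \cite[Proposition~5.2]{unisurface} and then identifying Items 1--3 of Theorem~\ref{unisphere thm} with the non-closed curves and Item 4 with the closed curves, exactly as you do. The paper does not spell out a separate proof beyond this citation-and-identification, so your write-up is the same argument made explicit, and your reading of which items come from closed curves (Item 4, curves $\curve(a,b)$, yielding the vectors in the plane $x_1=x_4$, $x_2=x_5$, $x_3=x_6$, $x_1+x_2+x_3=0$) versus non-closed ones (Items 1--3, curves $\curve(a,b,E)$) is supported by Proposition~\ref{prop: positive shear} and Theorem~\ref{thm: shear}.
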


By determining the rational quasi-lamination fan, we know the rational cones of the mutation fan.
Conspicuously open is the question of what additional (irrational) cones exist in the mutation fan.
We conjecture that these irrational cones are all rays and that they are all located on one $2$-dimensional plane (Conjecture~\ref{irrat rays}).
That conjecture is the only missing piece in the proof of another conjecture (Conjecture~\ref{univ real}), which gives explicit universal coefficients over $\reals$ as well. 

We briefly review some of the background material related to geometric cluster algebras in Section~\ref{cluster sec}.
However, as already discussed, we work exclusively, in this paper, with surfaces, tagged arcs and allowable curves.
Thus we leave most of the details on cluster algebras of geometric type, universal geometric coefficients, mutation-linear algebra, mutation fans, etc.\ to \cite{universal} and most of the details of the connection between these concepts and surfaces to \cite{unisurface}.

The remainder of the paper is organized as follows.
In Section~\ref{defs sec}, we give the definitions and results required to construct universal coefficients and the mutation fan for the four-punctured sphere.
In Section~\ref{covering sec}, we describe, in general terms, how key topological facts about the four-punctured sphere, namely its relationship to the once-punctured and four-punctured tori and to the $\integers^2$-punctured plane, are used in this paper.
In Section~\ref{arc sec}, we classify and parametrize tagged arcs in the sphere and establish conditions for their pairwise compatibility in terms of this parametrization. 
In Section~\ref{tagged tri}, we catalog the tagged triangulations in the sphere, organized into six combinatorial types.
In Section~\ref{curve sec}, we use our work from the previous two sections to classify allowable curves in the sphere and describe maximal compatible collections thereof, corresponding to maximal ($6$-dimensional or $5$-dimensional) cones in the rational quasi-lamination fan $\F_\rationals(T_0)$.  
We also describe adjacencies among these maximal cones.
In Section~\ref{shear sec}, we explicitly compute shear coordinates of allowable curves with respect to $T_0$. 
In Section~\ref{NTP sec}, we prove that the four-punctured sphere has the Null Tangle Property.
Section~\ref{global sec} is devoted to a conjecture on the support of the rational quasi-lamination fan and the structure of the mutation fan, as well as a related conjecture on universal geometric coefficients over~$\reals$.


\section{Geometric cluster algebras}\label{cluster sec}
We provide a quick sketch of definitions from \cite[Section 2]{universal} for readers unfamiliar with cluster algebras of geometric type. 
Additional details can be found in \cite{ca4}, although the definition of geometric type here and in \cite{universal} is broader than in~\cite{ca4}.
(Taking $I$ to be finite and $R=\Z$ in what follows recovers \cite[Definition~2.2]{ca4}.) Throughout this section, $[n]$ stands for $\{1,2, \ldots, n\}$. 

Fix a positive integer $n$, an indexing set $I$, and choose an \newword{underlying ring} $R$ to be  $\Z, \Q$, or $\R$.
Define $\PP= \text{Trop}_R(u_i : i \in I)$ to be the \newword{tropical semifield} whose elements are formal products of the form $\prod_{i \in I} u_i^{a_i}$ for $a_i \in R$. Note that $\PP$ is determined up to isomorphism by $R$ and the cardinality of $I$.

A \newword{cluster algebra of geometric type} is a subring of the field $\K$ of rational functions in $n$ independent variables with coefficients in $\Q\PP$. 
The cluster algebra $\A_R(\x, \tilde{B})$ is uniquely defined by the choice of an initial
\newword{(labeled) geometric seed} $(\x, \tilde{B})$ consisting of an $n$-tuple $\x = (x_1, \ldots, x_n)$ of algebraically independent elements of $\K$ which generate the field and 
a function $\tilde{B}$ from $([n] \cup I) \times [n]$ to $R$ whose first $n$ rows form an exchange matrix $B$. 
Although $I$ may be infinite, for convenience we refer to $\tilde{B}$ as a matrix and call it an \newword{extended exchange matrix}. 
The rows indexed by $I$ are called \newword{coefficient rows} 
and define an $n$-tuple $\y=(y_1, \ldots, y_n)$ of elements in the coefficient semifield $\PP$ via $y_j = \prod_{i \in I}u_i^{b_{ij}}$.

The $n$-tuple $\x$ is called a \newword{cluster} and the rational functions $x_i$ are called \newword{cluster variables}. Each $k \in [n]$ defines an involution $\mu_k$ on the set of labeled geometric seeds of rank $n$, known as \newword{seed mutation}. 
Setting $\mu_k(\x,\tilde{B}) = (\x', \tilde{B}')$, the new cluster $(x_1', \ldots, x_n')$ has $x_j'=x_j$ for all $j \neq k$, 
defines $x_k'$ in terms of an \newword{exchange relation} involving $\x$ and $\y$, and defines $\tilde{B}'$ via \newword{matrix mutation} of $\tilde{B}$. 
In particular, each new cluster variable is a rational function (in fact, a Laurent polynomial) in the original cluster variables $x_i$ with coefficients in $\Q\PP$. 
The cluster algebra $\A_R(\x, \tilde{B})$ is then defined to be the subalgebra of $\K$ generated by all of the cluster variables in all of the seeds arising from performing all possible sequences of mutations on the initial seed $(\x, \tilde{B})$. 
Up to isomorphism, the cluster algebra is determined entirely by $\tilde{B}$, so we also refer to it as $\A_R(\tilde{B})$. 
 
Let $\A_R(\tilde{B})$ and $\A_R(\tilde{B}')$ be two cluster algebras with the same exchange matrix $B$ over the same underlying ring $R$. A ring homomorphism between them is called a \newword{coefficient specialization} if it maps the cluster variables and coefficients at each seed in $\A_R(\tilde{B})$ to the corresponding cluster variables and coefficients in $\A_R(\tilde{B}')$ while satisfying an additional technical requirement. 
(Namely, we require that the map be continuous with respect to a certain topology on $\PP$. 
This is, for example, the discrete topology if $I$ is finite, or the formal power series topology if $I$ is countably infinite.)
If for any geometric cluster algebra $\A_R(\tilde{B}')$ with exchange matrix $B$ there exists a unique coefficient specialization from $\A_R(\tilde{B})$ to $\A_R(\tilde{B}')$, then we say $\A_R(\tilde{B})$ is a 
\newword{universal geometric cluster algebra} over $B$ and call $\tilde{B}$ a \newword{universal extended exchange matrix}. The rows of $\tilde{B}$ are called 
\newword{universal geometric coefficients} for $B$ over $R$.

A ``more universal'' version of universal geometric coefficients was introduced in \cite[Section~12]{ca4} for cluster algebras of finite type.
Also introduced in \cite{ca4} is another special class of coefficients, the \newword{principal coefficients}.
Both universal and principal coefficients give rise to cluster algebras that can be considered ``largest,'' in some sense, among all (geometric) cluster algebras $\A$ with the same exchange matrix $B$. 
 In particular, \cite[Theorem~4.6]{ca4} states that the \newword{exchange graph} of any such $\A$ is covered by the exchange graph of the principal-coefficients cluster algebra $\A_\bullet(B)$.
Furthermore, \cite[Theorem~3.7]{ca4} describes an arbitrary cluster variable in $\A$ in terms of the corresponding cluster variable in $\A_\bullet(B)$. 
This description uses the ``auxiliary addition'' in the semifield $\PP$ associated to $\A$.
On the other hand, the cluster algebra $\A_\univ(B)$ with universal coefficients is ``largest'' in the sense that every $\A$ is a homomorphic image of $\A_\univ(B)$ by a homomorphism that respects the exchange graph.

 An important feature of universal geometric coefficients is the connection with mutation-linear algebra, and in particular with the mutation fan, which the authors believe to be a fundamental object.
This complete, usually infinite, fan consists of convex cones which are essentially the domains of linearity of the action of matrix mutation on coefficients.
As mentioned in Section~\ref{sec: introduction}, the mutation fan contains the $\g$-vector fan for $B^T$ as a subfan. 
In many cases, universal geometric coefficients may be obtained by taking one nonzero vector in each ray of the mutation fan.
These ideas are developed in detail in \cite{universal}.


\section{Surfaces, arcs, and curves}\label{defs sec}
In this section, we review the basic definitions from \cite{fst,ft,unisurface} of marked surfaces, arcs, triangulations, allowable curves, and so forth, as well as the results linking these constructions to the mutation fan and to universal coefficients. 
In this paper we consider three marked surfaces, the once-punctured torus, the four-punctured torus, and the four-punctured sphere, all marked surfaces without boundary. 
We quote the basic definitions and results as they apply to marked surfaces without boundary and then specialize to these surfaces.  
The reader interested in marked surfaces with boundary should consult the original definitions. 

In general, a \newword{marked surface} without boundary is a pair $(\Su, \M)$ where $\Su$ is a compact orientable surface without boundary and $\M$ is a set of points in $\Su$ called \newword{marked points}.
The pair $(\Su, \M)$ must satisfy certain conditions which we do not review here (see \cite[Definition~2.1]{fst}, \cite[Definition~3.1]{unisurface}). The constructions we consider are essentially concerned with the topology of the complement $\Su\setminus\M$, so, as usual, we call the marked points \newword{punctures} and refer to the sphere with 4 marked points as the \newword{four-punctured sphere}, which we denote by $\Sp-4$. 

In \cite{fst,ft}, there are two notions of arcs, namely tagged arcs and (ordinary) arcs, with slightly different topological constraints in their definitions.
For us, the ordinary arcs will not be needed.  
Instead, we define \emph{taggable} arcs and tagged arcs. 
The taggable arcs are those arcs which can be made into tagged arcs by adding taggings.

\begin{definition}
\label{def: arcs}  
A \newword{taggable arc} in $(\Su, \M)$ is a curve $\gamma$ in $\Su$ such that:
\bullets
\item The endpoints of $\gamma$ lie in the set $\M$.
\item $\gamma$ is non-self-intersecting, except possibly at coinciding endpoints. In the case when its endpoints coincide, we call $\gamma$ a \newword{loop}. \item $\gamma$ has no punctures in its interior.  (That is, $\gamma$ intersects $\M$ only at its endpoints.)  
\item $\gamma$ does not bound an unpunctured monogon or a once-punctured monogon.
\stellub
We consider taggable arcs up to isotopy relative to $\M$.
Two taggable arcs are \newword{compatible} if there is an isotopy representative of each such that the two representatives do not intersect, except possibly at their endpoints.
\end{definition}

\begin{lemma}\label{no loops}
There are no taggable loops in the four-punctured sphere.
\end{lemma}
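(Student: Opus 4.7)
The plan is to derive a contradiction from the assumption that a taggable loop $\gamma$ exists, by using the Jordan curve theorem on the sphere together with the simple pigeonhole observation that $3$ punctures cannot be split into two groups each of size at least $2$.

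First, I would recall the structure forced by Definition~\ref{def: arcs}. A taggable loop $\gamma$ in $\Sp-4$ has coinciding endpoints at some puncture $p\in\M$, is non-self-intersecting except at $p$, contains no puncture of $\M$ in its interior, and does not bound an unpunctured monogon or a once-punctured monogon. Viewing $\gamma$ as a subset of the sphere $\Sp$ (not just $\Sp\setminus\M$), it is a simple closed curve passing through the single point $p$.

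Next, by the Jordan curve theorem applied in $\Sp$, the curve $\gamma$ separates $\Sp$ into two open disks $D_1$ and $D_2$. Each closed disk $\overline{D_i}$, together with the identification of its boundary circle into a single basepoint at $p$, is a monogon bounded by $\gamma$. The remaining three punctures of $\M\setminus\{p\}$ lie in $D_1\cup D_2$, since $\gamma$'s interior contains no puncture. Let $k_i$ denote the number of punctures in $D_i$, so $k_1+k_2=3$.

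Finally, I would run the pigeonhole step: the unordered pair $\{k_1,k_2\}$ is either $\{0,3\}$ or $\{1,2\}$. In the first case one side is an unpunctured monogon bounded by $\gamma$; in the second case one side is a once-punctured monogon bounded by $\gamma$. Either outcome contradicts the fourth bullet of Definition~\ref{def: arcs}. Hence no taggable loop exists in $\Sp-4$.

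No step here is a genuine obstacle; the only thing to be careful about is the customary subtlety that "bounds a (once-)punctured monogon" should be interpreted up to isotopy in $\Sp\setminus\M$, but since $\gamma$ together with either closed complementary disk literally is such a monogon in $\Sp$, the condition applies directly. The argument is essentially a one-line consequence of the fact that $|\M|-1=3$ is odd and small enough to force one side of any separating loop to have at most one puncture.
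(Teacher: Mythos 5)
Your proof is correct and follows essentially the same approach as the paper's: a loop separates the sphere into two disks, the three remaining punctures must distribute as $\{0,3\}$ or $\{1,2\}$, and either case produces an unpunctured or once-punctured monogon, contradicting the definition of a taggable arc. Your write-up is simply a more detailed elaboration of the paper's one-line argument.
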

\begin{proof}
A loop separates the sphere into two disks, one of which contains $0$ or $1$ punctures, but both possibilities are specifically excluded for taggable arcs in Definition~\ref{def: arcs}.
\end{proof}

\begin{definition}
\label{def: tag}
A \newword{tagged arc} $\gamma$ in $(\Su, \M)$ is a taggable arc in $(\Su,\M)$ 
together with a designation (or ``tagging'') of each end of the arc as either \newword{plain} or \newword{notched}.  
If the arc is a loop, then it must have the same tagging at both ends.
Two tagged arcs are \newword{compatible} if one of the following conditions holds:   
\bullets
\item The two underlying taggable arcs are the same (up to isotopy), and their tagging agrees at one endpoint but not the other. 
\item The two underlying taggable arcs are distinct and compatible (up to isotopy), and any shared endpoints have the same tagging.
\stellub
A \newword{tagged triangulation} of $(\Su,\M)$ is a maximal collection of distinct pairwise compatible tagged arcs. 
\end{definition}


\begin{definition}\label{def: signed adj}
Suppose $T$ is a tagged triangulation of $(\S,\M)$.
The \newword{signed adjacency matrix} $B(T)$ encodes the structure of $T$.
The rows and columns of $B(T)$ are indexed by the arcs in $T$.
Here, it is only necessary to define $B(T)$ in the case where all of the arcs of $T$ are tagged plain everywhere (for a general treatment, see \cite[Definition~4.1]{fst}). In this case, $T$ defines a decomposition of $\Su$ into triangles, with each arc shared by two different triangles.
For each pair $\alpha,\beta$ of arcs in $T$, the $\alpha\beta$-entry of $B(T)$ is the sum of contributions from each triangle of $T$.
A triangle contributes $+1$ if $\alpha$ and $\beta$ are arcs of the triangle with $\alpha$ immediately preceding $\beta$ in a clockwise traversal of the triangle, or contributes $-1$ if the same is true for a counterclockwise traversal.
The contribution is $0$ otherwise.
An example of a tagged triangulation, with all arcs tagged plain, and its corresponding signed adjacency matrix appear in Figure~\ref{tri and mat}.
(We will see in Proposition~\ref{all tri} that there is only one other combinatorial type of triangulation of the four-punctured sphere with all tags plain.)
\end{definition}

The signed adjacency matrix $B(T)$ is skew-symmetric, and thus each tagged triangulation encodes the exchange matrix of a cluster algebra of geometric type.
Passing from one tagged triangulation to another by ``flips'' of tagged arcs corresponds to matrix mutation.
(This is \cite[Lemma~9.7]{fst}.
See the discussion following Proposition~\ref{prop: maximal curves} for more on flipping tagged arcs.)
Another class of curves called allowable curves encodes the choice of coefficient rows.
We review their definition below.

\begin{definition}\label{def: allow}
An \newword{allowable curve} in $(\Su, \M)$ is a non-self-intersecting curve in $\Su \setminus \M$ that either  
\bullets
\item spirals into a puncture at both ends, or 
\item is closed, 
\stellub  
and 
\bullets
\item is not contractible in $\Su\setminus\M$, 
\item is not contractible to a point in $\M$, and
\item if it spirals into the same point at both ends, then it does not cut out a once-punctured disk.
\stellub

We consider allowable curves up to isotopy.
A curve may spiral into a puncture in a clockwise or counterclockwise manner, and the spiral direction may be chosen independently at each spiral point.
If the curve spirals into the same puncture at both ends, then the requirement that the curve not intersect itself forces both spirals to be in the same direction.  
Curves that coincide except for spiral directions are considered to be different curves.
Two allowable curves are \newword{compatible} either if there is an isotopy representative of each such that the two representatives do not intersect, 
or if the two curves are the same (up to isotopy) except that their spiral direction disagrees at exactly one end.
An \newword{(integral) quasi-lamination} is a collection of pairwise compatible allowable curves with positive integer weights. 
\end{definition}

Given a tagged arc $\alpha$ in $(\Su, \M)$ there is a unique allowable curve $\kappa(\alpha)$ which coincides with $\alpha$ except that, at each endpoint $v$, if $\alpha$ is tagged plain at $v$, then $\kappa(\alpha)$ spirals clockwise into $v$, and if $\alpha$ is tagged notched at $v$, then $\kappa(\alpha)$ spirals counterclockwise into $v$. 
It is immediate from the definitions that $\kappa$ is a bijection from tagged arcs to allowable curves that are not closed, and that two tagged arcs $\alpha$ and $\gamma$ are compatible if and only if the allowable curves $\kappa(\alpha)$ and $\kappa(\gamma)$ are compatible. 
(See also \cite[Definition~17.2]{fst} and \cite[Lemma~5.1]{unisurface}.)
Thus $\kappa$ takes tagged triangulations to maximal collections of non-closed pairwise compatible allowable curves.
Despite this partial correspondence via the map $\kappa$, it is useful to consider allowable curves and quasi-laminations separately from tagged arcs and tagged triangulations in order to define shear coordinates. 

\begin{definition}\label{shear def}
Given a quasi-lamination $L$ and a tagged triangulation $T$,  the \newword{shear coordinate vector} (or \newword{shear coordinates}) of $L$ with respect to $T$ is a vector $\b(T,L)=(b_\gamma(T,L):\gamma\in T)$ indexed by the tagged arcs $\gamma$ in $T$.
We define shear coordinates here only in the case when all arcs incident to a single vertex of $T$ are tagged identically and begin with the case when all of the tagged arcs are tagged plain everywhere.
The shear coordinates record how a 
quasi-lamination interacts with each arc and the two triangles that share that arc.
Each allowable curve $\lambda\in L$ contributes a quantity $\b(T,\lambda)$, and $\b(T,L)$ is the sum $\sum_{\lambda\in L}w_\lambda \b(T,\lambda)$, where $w_\lambda$ is the weight of $\lambda$ in $L$.
To compute $b_\gamma(T,\lambda)$, we apply an isotopy to $\lambda$ so as to minimize the number of intersection points of $\lambda$ with~$\gamma$.
Then $b_\gamma(T,\lambda)$ is the sum, over each intersection of $\lambda$ with $\gamma$, of a number that is~$-1$, $0$, or $1$.
The number is $\pm1$ as shown in Figure~\ref{shear fig} if $\lambda$ intersects the two triangles as shown, and $0$ otherwise.
The arc $\gamma$ is the diagonal of the square pictured and part of the curve $\lambda$ is represented by a vertical or horizontal dashed line.

If $T$ has some arcs notched (still assuming that for each vertex, all taggings at that vertex agree), then $\b(T,L)$ is defined to equal $\b(T',L')$, where $T'$ and $L'$ are a new tagged triangulation and a new 
quasi-lamination that we now define.
The triangulation $T'$ agrees with $T$ except that every notched tagging has been changed to plain.
The quasi-lamination $L'$ agrees with $L$ except that, for every vertex $v$ that had notched taggings in $T$, the spiral directions of all curves in $L$ incident to $v$ are reversed at $v$.
We then calculate $\b(T',L')$ as described in the previous paragraph.  
\end{definition}
\begin{figure}[ht]\centering
\scalebox{0.7}{\raisebox{41 pt}{\large$+1$}\,\,\,\,\includegraphics{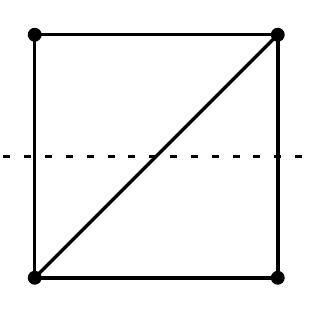}}
\qquad\qquad
\scalebox{0.7}{\includegraphics{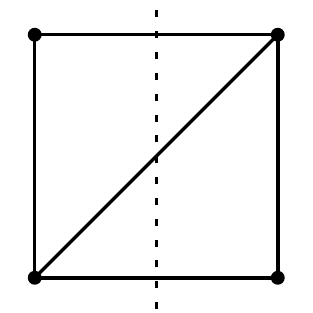}\,\,\raisebox{41 pt}{\large$-1$}}
\caption{Shear coordinates}
\label{shear fig}
\end{figure}

The following is \cite[Theorem~4.4]{unisurface}, which is a straightforward modification of \cite[Theorem~13.6]{ft}.

\begin{theorem}\label{q-lam bij}
Fix a tagged triangulation $T$.
Then the map $L\mapsto\b(T,L)$ is a bijection between integral (resp. rational) quasi-laminations and $\integers^n$ (resp.~$\rationals^n$).
\end{theorem}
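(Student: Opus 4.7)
The plan is to establish Theorem~\ref{q-lam bij} by first reducing to the plain-tagged case and then constructing an explicit inverse map. By the second paragraph of Definition~\ref{shear def}, if $T$ has some notched arcs, replacing each notched tagging by plain and reversing the spiral directions of all curves of $L$ at the affected punctures produces $T',L'$ with $\b(T',L') = \b(T,L)$. This tag-and-spiral swap is an involution on both triangulations of the given vertex-tag pattern and quasi-laminations, so it suffices to prove the bijection for a triangulation with all arcs tagged plain everywhere.

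With $T$ all-plain, each arc $\gamma \in T$ is the diagonal of a quadrilateral formed by the two adjacent triangles, and the shear contribution from an allowable curve $\lambda$ is the signed count of ``S-crossings'' of $\gamma$ of the two types shown in Figure~\ref{shear fig}. The first step toward surjectivity is to use $\b \in \integers^n$ to build a concrete quasi-lamination: in each quadrilateral place $|b_\gamma|$ parallel segments crossing the diagonal $\gamma$, all of the same local shape dictated by $\mathrm{sgn}(b_\gamma)$. Inside each triangle there are then prescribed numbers of segment-ends on each of its three sides; the crucial combinatorial lemma is that these multiplicities can always be matched in a planar, non-crossing way, uniquely up to sending any unmatched ends to spirals at the triangle's vertices. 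The resulting segments concatenate to give curves that either close up or terminate in spirals at punctures, and one checks they satisfy the allowability conditions of Definition~\ref{def: allow}.

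Injectivity is then a matter of showing that the construction above is forced. Given any quasi-lamination $L$ realizing the shear vector $\b$, one isotopes each curve to minimize intersections with $T$. Inside each triangle the arcs of $L$ decompose into finitely many isotopy classes of arcs connecting pairs of sides (or sides and spirals at vertices), and the shear coordinates record exactly the multiplicities of the ``transverse'' classes; the ``cornering'' classes (which contribute $0$ to shear) are then determined by the requirement that segments glue compatibly across each arc and that the global curves be non-self-intersecting allowable curves. Consequently two quasi-laminations with identical shear coordinates must be isotopic, proving injectivity. The rational statement follows formally: clearing denominators in $\b \in \rationals^n$ gives an integer vector, an integral quasi-lamination, and then scaling the weights back yields the rational quasi-lamination; uniqueness transfers from the integral case.

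The main obstacle I expect is the local matching step inside each triangle: one needs to verify that any triple of nonnegative integers on the three sides, together with prescribed signs from the shear contributions on adjacent diagonals, can be realized by a non-crossing system of arcs together with spirals at the triangle's corners, and that this realization is compatible across all shared edges. This is the combinatorial heart of \cite[Theorem~13.6]{ft} and its adaptation in \cite[Theorem~4.4]{unisurface}, and a careful bookkeeping argument—essentially, that ``excess'' at any puncture can always be absorbed by additional turns of a spiral without creating self-intersections—is what closes the proof.
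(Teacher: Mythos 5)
The paper does not prove Theorem~\ref{q-lam bij} at all: it is quoted verbatim as \cite[Theorem~4.4]{unisurface}, which in turn is described as ``a straightforward modification of \cite[Theorem~13.6]{ft}.'' So there is no in-paper proof to compare against; your proposal is an attempt to reconstruct the argument from the cited literature, and your high-level outline (reduce to the plain-tagged case by the tag-and-spiral swap; build an inverse by a local, per-triangle construction; argue injectivity by showing the local data is forced; obtain the rational case by clearing denominators) does match the strategy of \cite{ft} and \cite{unisurface}. You have correctly located where the real work lies.

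That said, the sketch leaves precisely that work undone, and you acknowledge as much. Two specific points where the argument is incomplete rather than merely compressed: (a)~For surjectivity, one must actually write down, for each triangle, the number of elementary arcs joining each pair of its sides as an explicit (signed) linear function of the $b_\gamma$'s, verify nonnegativity or explain how a deficit is absorbed by spirals at the corners, and then show that the glued-up curves are non-self-intersecting and satisfy all the exclusions in Definition~\ref{def: allow}; saying the multiplicities ``can always be matched'' asserts exactly what has to be proved. (b)~For injectivity, the claim that the zero-shear ``cornering'' contributions are ``determined by gluing compatibility'' needs a genuine argument: one must rule out closed components or spiral behavior that change the quasi-lamination without changing any $b_\gamma$; in the literature this is where the allowability exclusions (no curve contractible in $\Su\setminus\M$ or to a puncture, no once-punctured monogon) are actually used. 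A smaller caution: Definition~\ref{shear def} as stated in this paper only defines $\b(T,L)$ when every arc at a given vertex carries the same tag, so your first-paragraph reduction covers only that case; for a general tagged triangulation (for instance one containing a coinciding pair) one must invoke the full definition from \cite{ft}. In short, the route is right and matches the cited sources, but the proposal is a roadmap, not a proof, of the step the paper chose to import wholesale.
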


\begin{definition}\label{rat q-lam fan def}
Fix a tagged triangulation $T$.
For any collection $\Lambda$ of pairwise compatible allowable curves, 
let $C_\Lambda$ be the nonnegative $\reals$-linear span of the shear coordinates $\set{\b(T,\lambda):\lambda\in\Lambda}$.
Let $\F_\rationals(T)$ be the collection of all cones $C_\Lambda$.
It is shown in \cite[Theorem~4.10]{unisurface} that $\F_\rationals(T)$ is a rational simplicial fan.
We call $\F_\rationals(T)$ the \newword{rational quasi-lamination fan} for $T$.
\end{definition}

The key property of a marked surface that enables results about the mutation fan and  universal coefficients is the Null Tangle Property, which we now define.
\begin{definition}
\label{def tangle}
A \newword{tangle} in $(\Su,\M)$ is an arbitrary finite collection $\Xi$ of allowable curves, each with an integer weight.
In contrast with the definition of a quasi-lamination, there is no requirement of pairwise compatibility of curves and no requirement of positivity of weights.
The shear coordinates $\b(T,\Xi)$ of a tangle $\Xi$ with respect to a tagged triangulation $T$ are computed as the weighted sum of the shear coordinates, with respect to $T$, of the curves in the tangle.
The \newword{support} of a tangle is the set of curves appearing in the tangle with non-zero weight.
A tangle is \newword{trivial} if its support is empty.
A \newword{null tangle} is a tangle $\Xi$ such that, for any tagged triangulation $T$, the shear coordinates $\b(T,\Xi)$ are zero.
The \newword{Null Tangle Property} is the property of a marked surface that if $\Xi$ is a null tangle then it is trivial.
\end{definition}
The Null Tangle Property was proved in \cite[Theorem~7.4]{unisurface} for a family of surfaces coincidentally equal to the surfaces of finite growth (see \cite[Section~11]{fst}), and in \cite{unitorus} for the once-punctured torus.
We prove it in this work for the four-punctured sphere (Theorem~\ref{sphere NTP}).

The following theorem is the combination of parts of \cite[Theorem~4.10]{unisurface}, \cite[Proposition~6.10]{universal}, \cite[Theorem~8.7]{universal}, \cite[Proposition~7.10]{fst}, \cite[Theorem~7.3]{unisurface}, and \cite[Theorem~4.4]{universal}, along with a simple argument that we give below.
For the relevant definitions, see \cite[Section~ 2]{unisurface}.

\begin{theorem}\label{Null Tangle consequences}
Suppose $(\S,\M)$ has the Null Tangle Property and let $T$ be any tagged triangulation of $(\S,\M)$.
Then  
\bullets
\item The fan $\F_\rationals(T)$ is the rational part of the mutation fan $\F_{B(T)}$.  In particular:  
\bullets
\item  The support of $\F_\rationals(T)$ contains $\rationals^n$.  
\item  Every rational cone in $\F_{B(T)}$ 
is a cone in $\F_\rationals(T)$.
\item  The full-dimensional cones in $\F_{B(T)}$ are exactly the full-dimensional cones in $\F_\rationals(T)$.
\item  Every codimension-1 cone in $\F_\rationals(T)$ is a cone in $\F_{B(T)}$. 
\stellub
\item The $\g$-vector fan of the transpose $B(T)^T$ is the set of all cones $C_\Lambda$ such that $\Lambda$ is a collection of pairwise compatible \emph{non-closed} allowable curves.  (This statement must be modified if $(\S,\M)$ is a marked surface without boundary having exactly one puncture, but we will not need that case here.)
\item The shear coordinates of allowable curves constitute universal geometric coefficients for $B(T)$. 
\stellub
\end{theorem}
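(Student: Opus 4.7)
The plan is to assemble the theorem from its cited constituent pieces and then supply the short argument that promotes the basic identifications to the full roster of sub-bullets. The starting ingredient is \cite[Theorem~4.10]{unisurface}, which gives that $\F_\rationals(T)$ is a rational simplicial fan whose maximal cones are spanned by shear coordinates of maximal compatible collections of allowable curves. Assuming the Null Tangle Property, I would invoke \cite[Theorem~7.3]{unisurface}, together with the mutation-coordinate interpretation of shear coordinates in \cite[Proposition~7.10]{fst}, to conclude that the shear coordinates of allowable curves form a basis in the sense of mutation-linear algebra for $B(T)$: the Null Tangle Property is precisely the linear-independence half, and spanning follows from the bijection of Theorem~\ref{q-lam bij}. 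From this basis the last bullet of the theorem then follows immediately from \cite[Theorem~4.4]{universal}, which says that such a basis is exactly a set of universal geometric coefficients.

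With the basis established, the mutation-fan bullets follow from \cite[Proposition~6.10]{universal} and \cite[Theorem~8.7]{universal}. The former places the basis rays among the rays of $\F_{B(T)}$, while the latter equates the rational cones of $\F_{B(T)}$ with nonnegative spans of compatible subfamilies of basis elements; together these yield that every rational cone of $\F_{B(T)}$ is a cone of $\F_\rationals(T)$ and that $\rationals^n$ is contained in the support of $\F_\rationals(T)$. The $\g$-vector bullet comes from \cite[Proposition~5.2]{unisurface} and the discussion preceding the theorem: under the bijection $\kappa$, tagged triangulations correspond to maximal compatible collections of non-closed allowable curves, so the cones of the $\g$-vector fan for $B(T)^T$ are exactly the cones $C_\Lambda$ with $\Lambda$ a compatible collection of such curves.

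The short additional argument I would write fills in the statements about full-dimensional and codimension-$1$ cones. Full-dimensionality: by simpliciality and the basis property, every full-dimensional cone of $\F_\rationals(T)$ is the nonnegative span of $n$ rays of $\F_{B(T)}$; conversely, because a full-dimensional cone of $\F_{B(T)}$ must contain basis rays in its interior, it is a rational cone and so is covered by the previous bullet. For codimension-$1$ cones of $\F_\rationals(T)$, simpliciality means each such cone is the intersection of two adjacent full-dimensional cones $C_\Lambda$ and $C_{\Lambda'}$; these are full-dimensional cones of $\F_{B(T)}$ by the previous step, and matrix mutation in the flip direction between them is linear on $C_\Lambda \cup C_{\Lambda'}$ by \cite[Proposition~7.10]{fst}, so the common wall is itself a domain of linearity and therefore a cone of $\F_{B(T)}$.

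The main obstacle I anticipate is not geometric but purely bookkeeping: matching the precise formulation of ``basis'' and ``mutation-linear span'' used in \cite{universal} against what the Null Tangle Property literally delivers, and keeping the $B$ versus $B^T$ convention straight in the $\g$-vector bullet (since the $\g$-vector fan for $B(T)^T$ sits inside the mutation fan for $B(T)$). No further input about the particular surface is needed at this stage; the content of the theorem is the formal passage from a null-tangle statement to a complete description of the rational part of the mutation fan, and that passage is essentially a diagram-chase through the cited results once the basis identification is in hand.
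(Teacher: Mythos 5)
Your assembly of the theorem from the cited results matches the paper's strategy: every bullet except the codimension-$1$ assertion is attributed directly to the cited theorems, and the short finishing argument is about that last bullet. So the framing is right.

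However, your argument for the codimension-$1$ bullet has a genuine gap. You write that simpliciality forces each codimension-$1$ cone of $\F_\rationals(T)$ to be the intersection of two adjacent full-dimensional cones $C_\Lambda$ and $C_{\Lambda'}$, and you then argue from linearity of mutation across the flip. That inference is valid only when the fan is \emph{complete}, i.e.\ when every codimension-$1$ cone is a shared wall of two maximal cones of top dimension. A simplicial fan whose support is not all of $\reals^n$ need not have this property, and $\F_\rationals(T)$ is precisely such a fan for the four-punctured sphere: the type-VII cones constructed in Section~\ref{curve sec} are \emph{maximal} cones of $\F_\rationals(T_0)$ of dimension $n-1$, spanned by the shear coordinate vector of a closed curve together with non-closed curves compatible with it, and they are not faces of any full-dimensional cone. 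Your argument leaves these entirely untouched, yet they are exactly the cones for which the codimension-$1$ assertion is needed (in Section~\ref{curve sec} the paper invokes this bullet to conclude that the type-VII cones are maximal cones of $\F_B$).

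The paper handles this by arguing from the definition of the ``$R$-part'' of a fan rather than from wall-crossing. Given a codimension-$1$ cone $C$ of $\F_\rationals(T)$, let $C'$ be the smallest cone of $\F_{B(T)}$ containing $C$. If $C'$ has codimension $1$, then by property (iii) in the definition of ``rational part,'' $C$ is the largest cone of $\F_\rationals(T)$ in $C'$ and contains all rational points of $C'$; having the same dimension, $C=C'$. If instead $C'$ is full-dimensional, then by the previously established full-dimensional bullet, $C'$ is already a cone of $\F_\rationals(T)$; but then $C$, being in $\F_\rationals(T)$ and contained in $C'$, is a face of $C'$, hence already a cone of $\F_{B(T)}$ strictly smaller than $C'$, contradicting minimality of $C'$. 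This argument requires no assumption about $C$ being a wall between two chambers and so covers the type-VII cones. You should replace your wall-crossing argument by this one (or note explicitly that the wall-crossing argument only covers codimension-$1$ cones that are faces of two full-dimensional cones, and supply a separate argument for codimension-$1$ maximal cones).

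A smaller quibble: in the converse direction of your full-dimensional-cones argument you pass from ``$C'$ contains basis rays in its interior'' to ``$C'$ is a rational cone.'' That implication is not automatic (an irrational cone can contain rational points in its interior); the correct route is via $\rationals^n\subseteq|\F_\rationals(T)|$ and a density-plus-closure argument, but since the paper attributes this bullet entirely to the citations, the discrepancy is not essential.
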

Every part of this theorem follows immediately from the results listed above, except for the assertion that every codimension-1 cone in $\F_\rationals(T)$ is a cone in $\F_{B(T)}$.
The latter assertion follows from the others by an easy argument that we give here.
Saying that $\F_\rationals(T)$ is the rational part of $\F_{B(T)}$ is saying that (i) every cone in $\F_\rationals(T)$ is a rational polyhedral cone, (ii) each cone $C$ of $\F_\rationals(T)$ is contained in a cone $C'$ in $\F_{B(T)}$, and (iii) for every cone $C'$ of $\F_{B(T)}$, there is a unique largest (in the sense of containment) cone $C$ among cones in $\F_\rationals(T)$ contained in $C'$, and $C$ contains all rational points of $C'$.
If $C$ is a codimension-1 cone in $\F_\rationals(T)$, let $C'$ be the smallest cone in $\F_{B(T)}$ containing $C$.
If $C'$ is of codimension-1, then $C$ is the largest cone of $\F_\rationals(T)$ contained in $C'$, so $C$ contains every rational point in $C'$.
Since $C'$ and $C$ are of the same dimension, we conclude that they are equal.
If $C'$ is full-dimensional, then it is a cone in $\F_\rationals(T)$.  
But then $C$ is a face of $C'$, so we chose $C'$ incorrectly.
We conclude that $C$ is a face of $\F_{B(T)}$.
(The analogous argument holds in the context of \cite[Proposition~6.10]{universal}, so this is a general property of the ``$R$-part'' of a fan as defined in \cite[Section~6]{universal}.)

\section{Sphere, torus, and plane}\label{covering sec} 

Among the main tools in our arguments are some well-known covering spaces of the four-punctured sphere $\Sp-4$.
We will use these covering spaces to borrow results from the classification of arcs given in \cite{unitorus}.

First, $\Sp-4$ is doubly-covered by the four-punctured torus $\T-4$.
One way to picture the covering is to skewer a donut so that the surface of the donut is punctured in four places and so that rotating the skewer through a half-turn is a symmetry of the donut.
The quotient of the four-punctured torus modulo this rotation is a four-punctured sphere.

Second, the $\integers^2$-punctured plane $\reals^2-\integers^2$ is a covering space of $\Sp-4$. 
The covering of $\Sp-4$ by $\reals^2-\integers^2$ has the following description, which also relates to the covering of $\Sp-4$ by $\T-4$.
Consider the group $G$ of rigid motions on $\reals^2$ generated by half-turns about integer points.
This group consists of all such rotations together with all even-integer translations (i.e.\ translations taking $(0,0)$ to points with even-integer coordinates).
The even-integer translations constitute a normal subgroup $H$ of $G$ of index $2$.  
The quotient $(\reals^2-\integers^2)/H$ is $\T-4$ and the quotient $(\reals^2-\integers^2)/G$ is $\Sp-4$.
In each case, the natural map to the quotient is a covering map.  
The quotient group $G/H$  has two elements.
Its nontrivial element $\sigma$ acts on $\T-4$ by the rotation of the ``skewer,'' as discussed above, so that  $(\T-4)/(G/H)$ is $\Sp-4$. 
Moreover, the covering map from $(\reals^2-\integers^2)/G$ to $\Sp-4$ factors as the composition of the covering map from $\reals^2-\integers^2$ to $\T-4$ followed by the covering map from $\T-4$ to $\Sp-4$. 

\begin{figure}[ht]\centering
\scalebox{1}{\includegraphics{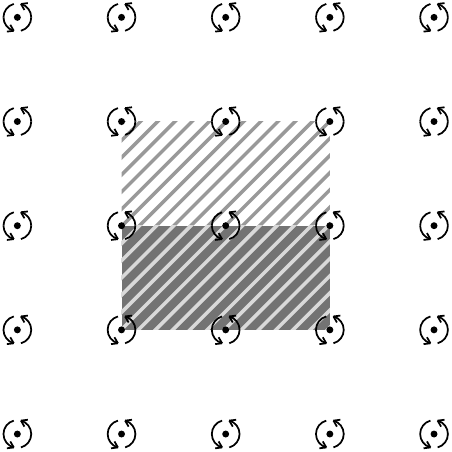}}
\caption{Fundamental domains for $G$ and $H$ on $\reals^2-\integers^2$}
\label{domain fig}
\end{figure}

Each covering map extends continuously to a map between unpunctured surfaces, but the extended maps to $\Sp-4$ are branched coverings, not coverings.
Although we will avoid explicitly dealing with branched covers, we will implicitly use the extended maps to make reference to images and preimages of punctures under the covering maps.
Using the (unbranched) covering maps is consistent with thinking of the punctures as ``deleted points,'' whereas using the extended maps is consistent with thinking of the punctures as ``marked points'' as they are formally called. 
(See Section~\ref{defs sec}.)
As there should be no confusion, we pass freely between these two points of view.

These constructions are pictured in Figures~\ref{domain fig}, \ref{modH fig}, and~\ref{modG fig}.
Figure~\ref{domain fig} shows $\reals^2$ with the points $\integers^2$ marked by dots and an indication of half-turns.
A fundamental domain for the action of $G$ is indicated by a dark gray rectangle, and a fundamental domain for $H$ is indicated by the entire striped square (including the dark gray rectangle).
Figure~\ref{modH fig} depicts $(\reals^2-\integers^2)/H$ as the striped square with edge identifications marked with arrows.
The points marked by dots are deleted and, after the edge identifications, these points become the four punctures of the torus labeled as $v_{00}, v_{01}, v_{10},$ and $v_{11}$. 
(For $ij\in\set{00,01,10,11}$, the preimage of $v_{ij}$ under the covering map is the set of points $[a,b]\in\integers^2$ with $[a,b]\equiv[i,j]$ modulo~ 2.)
The unlabeled center point in Figure~\ref{modH fig} is $v_{11}$.
As indicated in the picture, the unique non-identity element of $G/H$ acts on $(\reals^2-\integers^2)/H$ by a half-turn of the square. 
This action coincides with a half-turn about any of the punctures, as is apparent in the skewered-torus picture or by observing that $(\reals^2-\integers^2)/H$ can be pictured as a square, with identifications, centered at any of the punctures.
Figure~\ref{modG fig} depicts~$(\reals^2-\integers^2)/G$ as the gray rectangle with edge identifications marked with arrows and deleted points, which become the punctures, again labeled $v_{00}$, $v_{01}$, $v_{10}$, and~$v_{11}$.

\begin{figure}[ht]\centering
\begin{minipage}{240pt}\centering
\vspace{5pt}
\scalebox{1}{\includegraphics{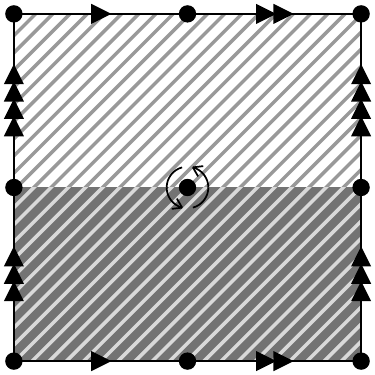}
\begin{picture}(0,0)(104,0)
\put(-14,110){$v_{00}$}
\put(42, 110){$v_{10}$}
\put(99,108){$v_{00}$}
\put(-14,-4){$v_{00}$}
\put(-17,59){$v_{01}$}
\put(42,-6){$v_{10}$}
\put(99,-4){$v_{00}$}
\put(99,58){$v_{01}$}
\end{picture}}
\caption{$(\reals^2-\integers^2)/H$ and a fundamental domain for $G/H$}
\label{modH fig}
\end{minipage}
\qquad
\begin{minipage}{145pt}\centering
\vspace{40pt}
\scalebox{1}{\includegraphics{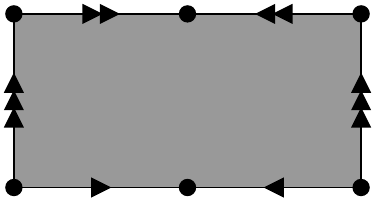}
\begin{picture}(0,0)(104,0)
\put(-14,-4){$v_{00}$}
\put(-14,59){$v_{01}$}
\put(42.5,60){$v_{11}$}
\put(42.5,-6){$v_{10}$}
\put(99,-4){$v_{00}$}
\put(99,58){$v_{01}$}
\end{picture}}
\vspace{5pt}
\caption{$(\reals^2-\integers^2)/G$}
\label{modG fig}
\end{minipage}
\end{figure}

\section{Taggable arcs and tagged arcs}

In this section we show that tagged arcs are specified by, among other data, a \newword{rational slope}, i.e., a rational number or $\infty$.  
Every rational slope has a unique \newword{standard form} which expresses the slope as a fraction $b/a$ 
where $a$ is a nonnegative integer and $b$ is an integer subject to the requirements that $b=1$ if $a=0$ and that $\gcd(a,b)=1$ if $a>0$. 
(By convention, if $a>0$, then $\gcd(a,0)=a$ and $\gcd(a,b)=\gcd(a,-b)$ for $b<0$.)
Consider an integer point $[p,q]\in\integers^2$ and the standard form $b/a$ of a rational slope.
Because $b/a$ is in standard form, the straight line segment connecting $[p,q]$ to $[p+a,q+b]$ intersects no other integer points.
Since any image of this line segment under the action of $G$ is a straight line segment connecting some point $[r,s]$ to the point $[r+a,s+b]$, and such line segments intersect at most at shared endpoints, this line projects to a non-self-intersecting curve in the four-punctured sphere with no punctures in its interior.  
Since in particular $a$ and $b$ are not both multiples of~2, the points $[p,q]$ and $[p+a,q+b]$ do not project to the same puncture, so the projected curve is not a loop.
Thus the projected curve is a taggable arc in the sense of Definition~\ref{def: arcs}.

We write $\arc(a,b,\set{v_{pq},v_{p+a, q+b}})$ for this taggable arc, interpreting the subscripts on $v_{pq}$ and $v_{p+a,q+b}$ modulo $2$.
The notation is well-defined because two line segments connecting integer points in $\R^2$ are related by the action of $G$ if and only if they have the same slopes and their endpoint sets agree modulo 2.
The set braces in the notation remind us that $v_{pq}$ and $v_{p+a,q+b}$ appear symmetrically in the construction, because the line segment connecting $[p+a,q+b]$ to $[p+2a,q+2b]$ projects to the same arc in $\mathbb{S}-4$ as the line segment connecting $[p,q]$ to $[p+a,q+b]$.
For any standard form $b/a$, there are exactly two choices for the set $\set{v_{pq},v_{p+a,q+b}}$. 
For example, if $b/a=3/2$, then $\set{v_{pq},v_{p+a,q+b}}$ is either $\set{v_{00},v_{01}}$ or $\set{v_{10},v_{11}}$.

 We use a similar notation for tagged arcs by attaching a superscript $\notch$ to the endpoints $v_{pq}$ and/or $v_{p+a,q+b}$ to indicate notched tagging of the arc at that endpoint.
The four tagged arcs whose underlying taggable arc is $\arc(a,b,\set{v_{pq},v_{rs}})$
are  \[\arc(a,b,\set{v_{pq},v_{rs}}), \, \arc(a,b,\set{v\notch_{pq},v_{rs}}), \, \arc(a,b,\set{v_{pq},v\notch_{rs}}), \, \arc(a,b,\set{v\notch_{pq},v\notch_{rs}}).\] 
Let $A$ be the set of triples $(a,b,E)$ such that $b/a$ is the standard form for a rational slope and $E$ is $\set{v_{pq},v_{rs}}$, $\set{v\notch_{pq},v_{rs}}$, $\set{v_{pq},v\notch_{rs}}$, or $\set{v\notch_{pq},v\notch_{rs}}$ with $[p,q]+[a,b]=[r,s]$ modulo 2. 

The following proposition is the main result of this section.
\begin{proposition}\label{prop: tagged arcs}
The map $(a,b,E)\mapsto\arc(a,b,E)$ is a bijection from $A$ to the set of tagged arcs in the four-punctured sphere. 
\end{proposition}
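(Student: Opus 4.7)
The plan is to verify that the map $(a,b,E) \mapsto \arc(a,b,E)$ is well-defined with distinct images on distinct inputs, and then to show every tagged arc is in its image, using the covering map $\pi \colon \R^2 - \Z^2 \to \Sp-4$ of Section~\ref{covering sec} as the central tool. The discussion immediately preceding the proposition already shows that each $(a,b)$ in standard form produces a taggable arc and that the four choices of tagging on a given endpoint set yield four tagged arcs, so only injectivity and surjectivity remain to check.

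For injectivity, I would suppose $\arc(a,b,E)$ and $\arc(a',b',E')$ are the same tagged arc. Their underlying taggable arcs agree, so the corresponding straight line segments in $\R^2$ differ by an element of $G$. Since elements of $G$ are compositions of even-integer translations and half-turns about integer points, they preserve the unordered pair $\{\pm(a,b)\}$ of slope vectors and the endpoint residues modulo $2$. Standard form picks out a unique representative of $\{(a,b),(-a,-b)\}$, so $(a,b) = (a',b')$, and the endpoint set together with its taggings is also preserved, so $E = E'$.

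For surjectivity, I would start with any tagged arc $\gamma$ and its underlying taggable arc $\gamma_0$. Choose a lift $\tilde\gamma_0$ of $\gamma_0$ to $\R^2 - \Z^2$; this is a simple curve connecting two integer points $[p,q]$ and $[r,s]$ and meeting $\Z^2$ only at those endpoints. The crux is to show that $\tilde\gamma_0$ is isotopic, rel endpoints in $\R^2 - \Z^2$, to the straight line segment from $[p,q]$ to $[r,s]$. This will force $\gcd(r-p, s-q) = 1$, since otherwise the straight segment meets interior lattice points and cannot be isotopic to a simple arc avoiding them. Setting $(a,b) = \pm(r-p, s-q)$ in standard form and reading the taggings of $\gamma$ into the endpoint set then produces the required preimage in $A$.

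The main obstacle is the straightening step. My preferred route is geometric: endow $\Sp-4$ with the flat pillowcase metric descended from the quotient $\R^2 / G$, and invoke the classical fact that every isotopy class of simple arcs with endpoints at punctures contains a unique straight geodesic representative, whose lift to $\R^2 - \Z^2$ is a straight line segment. A more combinatorial alternative would induct on the number of intersections of $\tilde\gamma_0$ with the integer horizontal and vertical lines, removing innermost bigons in $\R^2 - \Z^2$ and using simplicity of $\tilde\gamma_0$ to rule out any nontrivial wrapping around an interior puncture. Either way, I expect this step to require the most care, with the passage from straightened lifts to the bijection being essentially bookkeeping.
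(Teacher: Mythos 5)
Your plan differs from the paper's: you work directly with lifts to $\R^2-\Z^2$, whereas the paper reduces to $\sigma$-symmetric arcs in the once-punctured torus via the double cover (Propositions~\ref{bij to symmetric arcs}, \ref{sigma isotopic}, and~\ref{isotopic = sigma isotopic}) so as to import the already-proved classification from \cite{unitorus}. Working directly in the plane is not unreasonable, but as written there are two genuine gaps. The injectivity argument is circular: from $\arc(a,b,E)=\arc(a',b',E')$ as isotopy classes, you infer that ``the corresponding straight line segments in $\R^2$ differ by an element of $G$,'' but this is precisely what injectivity asserts and must itself be proved. Two straight segments that are not $G$-equivalent could a priori project to isotopic arcs; ruling this out requires exactly the uniqueness half of the straightening result that you defer to surjectivity. (The paper obtains injectivity from the known injectivity of the slope parametrization in $\T-1$, Proposition~\ref{unitorus taggable arcs}, transported by Proposition~\ref{sigma isotopic}.)

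The surjectivity argument defers the entire technical content to an unproved straightening step. You name two strategies. The geodesic route (``every isotopy class of simple arcs with endpoints at punctures contains a unique straight geodesic representative'') is not a formally automatic fact for the flat pillowcase orbifold with cone angles $\pi$; it is true, but it is a theorem requiring a proof or a precise citation, and the paper uses neither. The bigon-removal route is closer to what the paper actually does, but the real difficulty there is keeping the deformation compatible with the $G$-action at every stage so that the intermediate curves continue to project to simple arcs in $\Sp-4$; the paper addresses exactly this by running the straightening from \cite{unitorus} centrally symmetrically (Proposition~\ref{isotopic = sigma isotopic}), and your sketch does not engage with it. Relatedly, the assertion that ``$\gcd(r-p,s-q)=1$ is forced'' is only a corollary of a successful straightening; when $\gcd>1$ the straight segment is not even a curve in $\R^2-\Z^2$, so that observation does not tell you how to eliminate such a lift unless you already have the straightening theorem in hand. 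In short, the part you flag as routine bookkeeping is indeed routine, but the steps you flag as ``requiring the most care'' are essentially the whole proposition and are not carried out.
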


To prove Proposition~\ref{prop: tagged arcs}, we relate taggable arcs in $\Sp-4$ to taggable arcs in $\T-1$ that are symmetric with respect to the action of the unique non-identity element $\sigma$ in $G/H$.
Since the puncture $v_{00}$ is fixed by $\sigma$, the action of $\sigma$ on taggable arcs in $\T-1$ is well-defined.
Taggable arcs in $\T-1$ were classified in \cite[Proposition~4.1]{unitorus}.
In that paper, $\T-1$ is realized by modding out the integer-punctured plane by the group of integer translations.
In this paper, we realize $\T-1$ as $(\R^2-(2\Z)^2)/H$.
Informally, $\T-1$ is obtained from $\T-4$ by ``filling in'' the punctures $v_{01}$, $v_{11}$, and $v_{10}$, leaving only the puncture $v_{00}$.
The following proposition is a straightforward translation of \cite[Proposition~4.1]{unitorus}, from a statement about the once-punctured torus given by $\R^2-\Z^2$ modulo integer translations to a statement about the once-punctured torus given by $(\R^2-(2\Z)^2)/H$.
(Note that in the case of $\T-1$, the definitions of (ordinary) arcs and taggable arcs coincide.)

\begin{proposition}\label{unitorus taggable arcs}  
The taggable arcs in the once-punctured torus $(\R^2-(2\Z)^2)/H$ are the images, under the covering map, of straight line segments connecting $[0,0]$ to integer points $[2a,2b]$ such that $b/a$ is the standard form of a rational slope.
This map from standard forms of rational slopes to taggable arcs is a bijection.
\end{proposition}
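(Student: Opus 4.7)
The plan is to deduce the statement directly from \cite[Proposition~4.1]{unitorus} by exhibiting an explicit homeomorphism between the two models of the once-punctured torus. In \cite{unitorus}, $\T-1$ is presented as $(\R^2 - \Z^2)/T$, where $T$ is the group of integer translations; here it is presented as $(\R^2 - (2\Z)^2)/H$, where $H$ is the group of even-integer translations. So the task is essentially to rescale.

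First I would define the scaling map $\phi : \R^2 \to \R^2$ by $\phi(x) = 2x$. This homeomorphism carries $\Z^2$ bijectively onto $(2\Z)^2$, so it restricts to a homeomorphism $\R^2 - \Z^2 \to \R^2 - (2\Z)^2$. Writing $\tau_v$ for translation by $v \in \Z^2$, the identity $\phi \circ \tau_v = \tau_{2v} \circ \phi$ exhibits $\phi$ as equivariant along the group isomorphism $T \to H$ given by $\tau_v \mapsto \tau_{2v}$. Hence $\phi$ descends to a homeomorphism $\bar\phi : (\R^2 - \Z^2)/T \to (\R^2 - (2\Z)^2)/H$ between the two realizations of $\T-1$, carrying the puncture to the puncture. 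Since $\phi$ sends the straight line segment from $[0,0]$ to $[a,b]$ to the straight line segment from $[0,0]$ to $[2a,2b]$, preserving slopes, the two parameterizations of arcs by integer endpoints correspond under $\bar\phi$.

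Finally, I would transport the bijection from \cite[Proposition~4.1]{unitorus}, which asserts that taggable arcs in $(\R^2-\Z^2)/T$ are in bijection with standard forms $b/a$ via the image of the segment from $[0,0]$ to $[a,b]$, through $\bar\phi$ to obtain the claimed bijection with images of segments from $[0,0]$ to $[2a,2b]$ in $(\R^2-(2\Z)^2)/H$. The ``taggable arc'' conditions are purely topological (non-self-intersecting, no interior punctures, not bounding an unpunctured or once-punctured monogon) and are therefore preserved by the homeomorphism $\bar\phi$, so no content of the original statement is lost in the translation. I do not anticipate a genuine obstacle here; the only point requiring any care is the equivariance identity $\phi \circ \tau_v = \tau_{2v} \circ \phi$, which is immediate from the linearity of $\phi$.
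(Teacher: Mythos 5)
Your proof is correct and matches the paper's intended approach: the paper dismisses this as a ``straightforward translation'' of \cite[Proposition~4.1]{unitorus} from the $\R^2-\Z^2$-mod-integer-translations model to the $\R^2-(2\Z)^2$-mod-even-translations model, and your scaling map $\phi(x) = 2x$, together with the equivariance check $\phi \circ \tau_v = \tau_{2v}\circ\phi$, is exactly what that translation amounts to.
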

The line segment connecting $[0,0]$ to $[2a,2b]$ is centrally symmetric about its midpoint $[a,b]$.
When we project to $\T-1$, this central symmetry becomes symmetry with respect to the action of $\sigma$. 

\begin{definition}\label{sigma-symmetric}
A taggable arc $\alpha$ in the once-punctured torus is \newword{$\sigma$-symmetric} if there is a parametrization of $\alpha$ as a map from $[0,1]$ to the once-punctured torus such that for each $s\in [0,1]$, $\alpha(s) = \sigma(\alpha(1-s))$.
We will call such a parametrization a \newword{$\sigma$-symmetric parametrization}.
We will use the notation $f_t(s)$ for an isotopy, parametrized by $(s,t)\in[0,1]\times [0,1]$, between the curves $f_0$ and $f_1$.
A \newword{$\sigma$-symmetric isotopy} is an isotopy between two $\sigma$-symmetric arcs in the once-punctured torus such that for each $(s,t) \in [0,1]\times [0,1]$, $f_t(s)=\sigma\left(f_t(1-s)\right)$.
That is, each $f_t$ is a $\sigma$-symmetric curve with a $\sigma$-symmetric parametrization $f_t(s)$.
Two $\sigma$-symmetric arcs are \newword{$\sigma$-symmetrically isotopic} if there is a $\sigma$-symmetric isotopy between them.
\end{definition}

The following proposition is immediate.
\begin{proposition}\label{centrally symmetric}
A taggable arc in $\T-1$ is $\sigma$-symmetric if and only if each of its lifts to $\R^2-(2\Z)^2$ (or equivalently one of its lifts) is centrally symmetric with respect to some integer point.
\end{proposition}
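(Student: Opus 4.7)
The plan is to exploit the covering map $p\colon \R^2 - (2\Z)^2 \to \T - 1$, whose deck transformation group is $H$ (the even-integer translations) and for which the lifts of $\sigma$ to self-homeomorphisms of $\R^2-(2\Z)^2$ are precisely the elements of the nontrivial coset $G \setminus H$, namely the half-turns about integer points. Once this identification is in hand, the proposition will follow directly from the standard lifting correspondence for covering spaces: central symmetry of a lifted curve about an integer point will correspond to the action of some lift of $\sigma$, which after projection becomes $\sigma$-symmetry.

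For the forward direction I would argue as follows. Suppose $\alpha$ admits a $\sigma$-symmetric parametrization, so $\alpha(s) = \sigma(\alpha(1-s))$. Fix any lift $\tilde\alpha$ of $\alpha$ and any lift $\tilde\sigma \in G \setminus H$ of $\sigma$. The curve $s \mapsto \tilde\sigma(\tilde\alpha(1-s))$ also projects to $\alpha$, so it is a second lift of $\alpha$ and hence differs from $\tilde\alpha$ by some deck transformation $h \in H$. Rearranging gives
\[
\tilde\alpha(s) \;=\; (h^{-1}\tilde\sigma)\bigl(\tilde\alpha(1-s)\bigr).
\]
Since $h^{-1}\tilde\sigma$ again lies in $G \setminus H$, it is a half-turn $r_{[a,b]}$ about some integer point $[a,b]$, exhibiting $\tilde\alpha$ as centrally symmetric about $[a,b]$. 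The backward direction is immediate: if $\tilde\alpha(s) = r_{[a,b]}(\tilde\alpha(1-s))$, then applying $p$ to both sides yields $\alpha(s) = \sigma(\alpha(1-s))$, establishing $\sigma$-symmetry of $\alpha$.

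For the parenthetical equivalence between ``each lift'' and ``one lift,'' I would observe that any two lifts of $\alpha$ differ by a deck transformation $h \in H$. If one lift is centrally symmetric about $[a,b]$, then conjugating the half-turn by $h$ shows that $h \cdot \tilde\alpha$ is centrally symmetric about the integer point $h([a,b])$, so the property passes from one lift to every lift.

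The only subtlety—scarcely an obstacle, which is why the authors call the statement immediate—is bookkeeping for the three groups $G$, $H$, and $G/H$ and the observation that elements of $G \setminus H$ are exactly the half-turns about integer points (versus the even-integer translations comprising $H$). Once this identification is clear, both directions of the biconditional are one-line consequences of the lifting correspondence, and no properties of $\alpha$ beyond continuity are used.
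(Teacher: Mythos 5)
Your proof is correct and is almost certainly the argument the authors have in mind: the paper does not actually give a proof of this proposition, declaring it ``immediate,'' so your job was to fill in what the authors took for granted. The key identification you make — that the lifts of $\sigma$ under the covering $p\colon \R^2-(2\Z)^2\to\T-1$ (whose deck group is $H$) are exactly the elements of the nontrivial coset $G\smallsetminus H$, i.e.\ the half-turns about integer points — is the whole content, and from there both directions and the ``each lift iff one lift'' equivalence fall out by deck-transformation bookkeeping exactly as you describe. One tiny point worth noting: the paper's notion of ``centrally symmetric'' is set-wise invariance under a half-turn, whereas your backward direction starts from the parametrized identity $\tilde\alpha(s)=r_{[a,b]}(\tilde\alpha(1-s))$; for an arc whose endpoints are interchanged by the half-turn these are equivalent (parametrize symmetrically about the unique fixed point of the involution restricted to the arc), but that reparametrization step is implicitly used. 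Given that the paper treats the whole statement as immediate, this level of pedantry is certainly not required, and your argument is sound.
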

To relate taggable arcs in $\Sp-4$ to $\sigma$-symmetric arcs in $\T-1$, we will construct a map $\phi$ which sends a curve in $\Sp-4$ with one endpoint at $v_{00}$, satisfying the conditions of Definition~\ref{def: arcs}, to a $\sigma$-symmetric taggable arc in $\T-1$.
Given such a curve $\alpha$, let $\alpha_1$ and $\alpha_2$ be its two lifts to $\T-4$.
Since punctures lift to punctures and the covering map is a local homeomorphism, both $\alpha_1$ and $\alpha_2$ are taggable arcs in $\T-4$.
We concatenate $\alpha_1$ with the reverse of $\alpha_2$ to form a loop $\overline\alpha$ in $\T-1$ with both endpoints at $v_{00}$.  
Since $\alpha_1$ and $\alpha_2$  meet the conditions of Definition~\ref{def: arcs} in $\T-4$, $\overline{\alpha}$ has no self-intersections, no interior punctures, and is not contractible to the puncture in $\T-1$.
Since no arc based at $v_{00}$ can bound a once-punctured monogon in $\T-1$, we conclude that $\overline{\alpha}$ meets the conditions of Definition~\ref{def: arcs}.   
Since the two lifts $\alpha_1$ and $\alpha_2$ in $\T-4$ are related by the action of $\sigma$, the arc $\overline{\alpha}$ is $\sigma$-symmetric.

\begin{proposition}\label{bij to symmetric arcs}
The map $\phi: \alpha\mapsto \overline{\alpha}$ is a bijection between curves with an endpoint at $v_{00}$ meeting the conditions of Definition~\ref{def: arcs} in the four-punctured sphere and $\sigma$-symmetric taggable arcs in the once-punctured torus.
\end{proposition}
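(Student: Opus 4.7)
My plan is to define an explicit inverse map $\psi$ on $\sigma$-symmetric parametrized representatives, verify that $\phi$ and $\psi$ are mutual inverses at the representative level, and then address well-definedness on isotopy classes. Given a $\sigma$-symmetric taggable arc $\beta$ in $\T-1$ with a $\sigma$-symmetric parametrization, the midpoint $\beta(1/2) = \sigma(\beta(1/2))$ is a fixed point of $\sigma$; since $v_{00}$ has been removed from $\T-1$, this midpoint must be one of $v_{01}, v_{10}, v_{11}$. Embeddedness of $\beta$ combined with $\sigma$-symmetry forces $\beta|_{(0, 1/2)}$ to avoid every $\sigma$-fixed point, because a passage $\beta(s_0) = v_{fixed}$ at $s_0 \ne 1/2$ would force $\beta(1-s_0) = \sigma(\beta(s_0)) = \beta(s_0)$, violating non-self-intersection. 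Hence $\beta|_{(0, 1/2)}$ lies in $\T-4 \subset \T-1$, and I set $\psi(\beta)$ to be the image of $\beta|_{[0, 1/2]}$ under the double cover $\T-4 \to \Sp-4$. A parallel argument confirms that $\psi(\beta)$ is non-self-intersecting in $\Sp-4$ with no interior punctures, and it joins the distinct punctures $v_{00}$ and the projection of $\beta(1/2)$, so it satisfies Definition~\ref{def: arcs}.

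Writing $\alpha_1$ and $\alpha_2 = \sigma \circ \alpha_1$ for the two lifts of $\alpha$ to $\T-4$, the curve $\overline{\alpha}$ is $\alpha_1$ concatenated with the reverse of $\alpha_2$; its first half, after reparametrization, is $\alpha_1$ itself, which projects back to $\alpha$, giving $\psi \circ \phi = \mathrm{id}$ on representatives. Conversely, setting $\alpha_1(s) = \beta(s/2)$ as a lift of $\psi(\beta)$, the $\sigma$-symmetry of $\beta$ gives $\sigma(\alpha_1(s)) = \beta(1-s/2)$, so concatenating $\alpha_1$ with the reverse of $\sigma \circ \alpha_1$ reproduces $\beta$ exactly, giving $\phi \circ \psi = \mathrm{id}$. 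Well-definedness of $\phi$ on isotopy classes is immediate by path-lifting: an isotopy $\alpha \sim \alpha'$ in $\Sp-4$ lifts through the double cover to compatible isotopies of the two lifts in $\T-4$, whose concatenation gives $\overline{\alpha} \sim \overline{\alpha'}$ in $\T-1$.

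The main obstacle is well-definedness of $\psi$ on isotopy classes: given two $\sigma$-symmetric representatives $\beta_0, \beta_1$ of one isotopy class in $\T-1$, I must show $\psi(\beta_0) \sim \psi(\beta_1)$ in $\Sp-4$. My plan is to replace the given ordinary isotopy from $\beta_0$ to $\beta_1$ with a $\sigma$-equivariant one; its restriction to $[0, 1/2]$ then remains in $\T-4$ (the midpoint is pinned to a $\sigma$-fixed point throughout) and projects through $\T-4 \to \Sp-4$ to the required isotopy $\psi(\beta_0) \sim \psi(\beta_1)$. To build the $\sigma$-equivariant isotopy, I would lift both $\beta_i$ to $\R^2 - (2\Z)^2$ starting at $[0, 0]$; by Proposition~\ref{unitorus taggable arcs}, their common standard-form slope $b/a$ forces endpoints at $[2a, 2b]$, and the $\sigma$-symmetric parametrization forces each lift to be invariant under the half-turn $g_{[a, b]}$ about the midpoint $[a, b]$. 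Both lifts are ordinary-isotopic (rel endpoints) to the $g_{[a, b]}$-invariant straight line from $[0, 0]$ to $[2a, 2b]$, and an equivariant straightening of each lift (obtained by straightening one half in $\R^2 - (2\Z)^2$ to the corresponding half of the straight line, and extending to the other half via $g_{[a, b]}$) produces a $g_{[a, b]}$-equivariant isotopy between the two lifts that descends to the desired $\sigma$-equivariant isotopy in $\T-1$.
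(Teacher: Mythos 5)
Your first two paragraphs are correct and follow the paper's approach precisely: the paper likewise constructs the inverse $\psi$ by choosing a $\sigma$-symmetric parametrization, using that the only $\sigma$-fixed points are the punctures to locate $\beta(1/2)$ at a puncture distinct from $v_{00}$, invoking embeddedness to rule out any other passage through a fixed point, and projecting $\beta|_{[0,1/2]}$ through $\T-4\to\Sp-4$; the composite identities you check are implicit in the paper.

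Your third paragraph, however, addresses something Proposition~\ref{bij to symmetric arcs} does not assert. As the paper notes immediately after the proof, this proposition is a statement about \emph{individual curves} rather than isotopy classes, and the isotopy-level correspondence is deliberately postponed to Propositions~\ref{sigma isotopic} and~\ref{isotopic = sigma isotopic}. The ``equivariant straightening'' you sketch is essentially the content of Proposition~\ref{isotopic = sigma isotopic}, where the assertion that both lifts are plane-isotopic to the straight segment is the hard input (Proposition~\ref{unitorus taggable arcs}, i.e.\ \cite[Proposition~4.1]{unitorus}) rather than an incidental observation, and where the equivariance of the straightening is argued in detail. So that paragraph is not a gap in your proof of the stated proposition; it is extra material duplicating, in sketch form, a later and more delicate result.
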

\begin{proof}
We construct a map $\psi$ that is an inverse for $\phi$.
Given a $\sigma$-symmetric curve $\alpha$ in $\T-1$ meeting the conditions of Definition~\ref{def: arcs} (in $\T-1$), choose a $\sigma$-symmetric parametrization of $\alpha$. 
Restrict $\alpha$ to $\T-4$, and observe that its restriction has no points of self-intersection.
Since the punctures are the only $\sigma$-fixed points, $\alpha(1/2)$ is a puncture $v' \neq v_{00}$.
Moreover, $\alpha$ intersects no punctures except $v_{00}$ (at $s=0$ and $s=1$) and $v'$ (at s=1/2), because otherwise its $\sigma$-symmetry implies that it has a point of self-intersection, a contradiction.  
The restriction of $\alpha$ to the interval $[0,1/2]$ is a curve in $\T-4$ that meets the conditions of Definition~\ref{def: arcs} (in $\T-4$), and $\psi(\alpha)$ is the projection of this curve to $\Sp-4$.
Note that $\psi$ does not depend on the direction of the $\sigma$-symmetric parametrization of $\alpha$ since the restriction of $\alpha$ to $[0,1/2]$ projects to the same curve in $\Sp-4$ (but parametrized in the opposite direction) as the restriction of $\alpha$ to $[1/2,1]$.  
\end{proof}
Proposition~\ref{bij to symmetric arcs} was about ``curves meeting the conditions of Definition~\ref{def: arcs}'' rather than simply about ``taggable arcs,'' because taggable arcs are considered up to isotopy, whereas Proposition~\ref{bij to symmetric arcs} is a statement about individual curves.
We now show that we can consider taggable arcs (up to isotopy as usual) by considering their images under $\phi$ up to $\sigma$-symmetric isotopy.
\begin{proposition}\label{sigma isotopic}
Two curves in the four-punctured sphere meeting the conditions of Definition~\ref{def: arcs}, both with an endpoint at $v_{00}$, are isotopic if and only if their images under $\phi$ are $\sigma$-symmetrically isotopic in the once-punctured torus.
\end{proposition}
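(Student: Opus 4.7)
The plan is to extend each direction of the bijection $\phi$ from Proposition~\ref{bij to symmetric arcs} from a statement about individual curves to a statement about one-parameter families. The forward direction uses the homotopy lifting property for the unbranched cover $\T-4 \to \Sp-4$, and the backward direction exploits the discreteness of the set of $\sigma$-fixed points in $\T-1$.

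\emph{Forward direction.} Let $f_t$ be an isotopy in $\Sp-4$ from $\alpha = f_0$ to $\beta = f_1$, so each $f_t$ is a curve satisfying Definition~\ref{def: arcs} with $f_t(0) = v_{00}$. Applying the homotopy lifting property to the unbranched cover $\T-4 \to \Sp-4$ produces a continuous family $(f_t)_1$ of lifts in $\T-4$ starting at $v_{00}$, and the second family of lifts is $(f_t)_2 = \sigma \circ (f_t)_1$. Concatenating $(f_t)_1$ with the reverse of $(f_t)_2$ yields a continuous family $\overline{f_t}$ in $\T-1$ with $\sigma$-symmetric parametrizations varying continuously in $t$. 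By construction $\overline{f_0} = \overline{\alpha}$ and $\overline{f_1} = \overline{\beta}$, so this is a $\sigma$-symmetric isotopy provided each $\overline{f_t}$ is a taggable arc. An interior self-intersection of $\overline{f_t}$ would project either to a self-intersection of $f_t$, which is excluded, or force the two lifts to meet at a $\sigma$-fixed point, which is excluded because the $\sigma$-fixed points of $\T-4$ are exactly its punctures and $f_t$ has no punctures in its interior. The conditions forbidding bounded monogons are stable under isotopy of loops.

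\emph{Backward direction.} Let $F_t$ be a $\sigma$-symmetric isotopy from $\overline{\alpha}$ to $\overline{\beta}$ in $\T-1$ with $\sigma$-symmetric parametrizations $F_t(s)$. For every $t$, the midpoint $F_t(1/2)$ is fixed by $\sigma$. The $\sigma$-fixed set of $\T-1$ consists of the four isolated points $v_{00}, v_{01}, v_{10}, v_{11}$, so continuity forces $F_t(1/2)$ to equal a single constant $v'$. Since $\overline{\alpha}$ has no self-intersections apart from the coincidence of its two endpoints at $v_{00}$, we have $v' \ne v_{00}$. Restricting each $F_t$ to $s \in [0, 1/2]$ then yields a continuous family of curves in $\T-4$ from $v_{00}$ to $v'$, which projects under $\T-4 \to \Sp-4$ to an isotopy from $\psi(\overline{\alpha}) = \alpha$ to $\psi(\overline{\beta}) = \beta$. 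The taggable-arc conditions for the projected intermediate curves follow from those of $F_t$, using that the covering map is a local homeomorphism on the punctured surfaces.

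The main obstacle I anticipate is the routine but careful bookkeeping required to verify that every intermediate member of each one-parameter family is a genuine taggable arc, rather than merely a continuous map into the surface. The central ingredients making this work are the identification of the $\sigma$-fixed points of $\T-1$ with $\{v_{00}, v_{01}, v_{10}, v_{11}\}$, together with the fact that $\T-4 \to \Sp-4$ is unbranched on the punctured surfaces.
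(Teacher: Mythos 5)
Your proof is correct and follows essentially the same route as the paper: lift via the homotopy lifting property for the cover $\T-4 \to \Sp-4$, take $\overline{f_t}$ by concatenating the two lifts for the forward direction, and restrict to $s\in[0,1/2]$ then project for the backward direction. The only cosmetic difference is that the paper explicitly restricts each $f_t$ to the open interval $(0,1)$ before lifting (since $v_{00}$ is a puncture and lies outside the unbranched cover) and then extends the lift continuously to $[0,1]$, a small technical step you leave implicit; conversely you spell out the verification that intermediate curves remain taggable arcs, which the paper leaves to the reader.
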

\begin{proof}
Fix an isotopy $f:[0,1]\times[0,1]\to \Sp-4$ between curves $\alpha$ and $\beta$ with endpoints at $v_{00}$ and $v''$ (where $v'' \in  \M\setminus \{v_{00}\}$) that satisfy the conditions of Definition~\ref{def: arcs}. 
We restrict each $f_t$ to the open interval $(0,1)$ and appeal to the homotopy lifting property to lift this restriction of $f$, obtaining an isotopy $g: (0,1)\times [0,1] \to \T-4$.
Since for each $t\in [0,1]$, $g_t$ limits to $v_{00}$ as $s$ goes to $0$ and limits to $v''$ as $s$ goes to $1$, we can continuously extend $g$ so that it is defined on $[0,1]\times [0,1]$.
By acting on $g$ by $\sigma$, we obtain another lift $h$.   
As in our construction of the map $\phi$ above, we concatenate $g$ with the reverse of $h$.
We denote the result by $\overline{f}$, and observe that for each $t$, $\overline{f_t}$ is a $\sigma$-symmetric arc in $\T-1$ that can be given a $\sigma$-symmetric parametrization so that the projection of $\overline{f_t}(s)$ to $\Sp-4$ is $f_t(2s)$. 
The map $(s,t)\mapsto\overline{f_t}(s)$ is an isotopy from $\overline{\alpha}$ to $\overline{\beta}$.

Suppose that $f:[0,1]\times [0,1]\to \T-1$ is a $\sigma$-symmetric isotopy between $\sigma$-symmetric curves meeting the conditions of Definition~\ref{def: arcs}.
For each $t\in [0,1]$ and $s\le 1/2$, the composition of $f_t$ and the covering map from $\T-1$ to $\Sp-4$ is precisely the curve $\psi(f_t)$, which satisfies Definition~\ref{def: arcs}.
Thus we obtain an isotopy between $\psi(f_0)$ and $\psi(f_1)$ in $\Sp-4$. 
\end{proof}

Now we have reduced the classification of taggable arcs in $\Sp-4$ to the classification of $\sigma$-symmetric isotopy classes of $\sigma$-symmetric taggable arcs in $\T-1$.
The next result shows that $\sigma$-symmetric arcs are $\sigma$-symmetrically isotopic to the projection of a straight-line segment connecting even integer points.

\begin{proposition}\label{isotopic = sigma isotopic}
Each $\sigma$-symmetric curve $\alpha$ in $\T-1$ satisfying the conditions of Definition~\ref{def: arcs} is $\sigma$-symmetrically isotopic to the projection of a straight line segment with rational slope $b/a$ in standard form connecting the integer points $[0,0]$ and $[2a,2b]$.
\end{proposition}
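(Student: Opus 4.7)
My plan is to reduce the statement to the classification in Proposition~\ref{unitorus taggable arcs} and then upgrade the resulting ordinary isotopy to a $\sigma$-symmetric one. First, I would fix a $\sigma$-symmetric parametrization of $\alpha$ with $\alpha(0) = v_{00}$ and lift $\alpha$ to a curve $\tilde\alpha$ in $\reals^2 - (2\integers)^2$ starting at $[0,0]$. Since $\alpha(1) = v_{00}$, the lifted endpoint $\tilde\alpha(1)$ is some $[2a,2b] \in (2\integers)^2$. By Proposition~\ref{centrally symmetric}, $\tilde\alpha$ is centrally symmetric about an integer point, which must be the midpoint $\tilde\alpha(1/2) = [a,b]$. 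The requirements of Definition~\ref{def: arcs} applied to $\alpha$, combined with $\alpha(0) = \alpha(1) = v_{00}$, force $\alpha(1/2) \neq v_{00}$, hence $[a,b] \notin (2\integers)^2$, so at least one of $a, b$ is odd.

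Next, I would apply Proposition~\ref{unitorus taggable arcs} to $\alpha$: its ordinary isotopy class in $\T-1$ corresponds to some standard slope $b'/a'$ via the projection $\beta$ of the line segment from $[0,0]$ to $[2a',2b']$. Lifting this ordinary isotopy in $\reals^2 - (2\integers)^2$ starting from $\tilde\alpha$ preserves endpoints, so $[2a',2b'] = [2a,2b]$ and $b/a$ is itself in standard form. Moreover, $\beta$ is $\sigma$-symmetric because its lift is a straight segment centrally symmetric about $[a,b]$, so $\beta$ is exactly the candidate arc named in the statement.

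The essential step is then to promote the ordinary isotopy between the two $\sigma$-symmetric arcs $\alpha$ and $\beta$ to a $\sigma$-symmetric one. My approach would be to extend the ordinary isotopy $f_t$ to an ambient isotopy $F_t$ of $\T-1$ fixing $v_{00}$ (via the isotopy extension theorem), and then to replace $F_t$ by an equivariant version $\tilde F_t$ satisfying $\tilde F_t \circ \sigma = \sigma \circ \tilde F_t$: concretely, one lifts the ambient isotopy to $\reals^2 - (2\integers)^2$, modifies it inside a fundamental domain for the full group $G$, and extends by $\tau$-symmetry across the half-turn-fixed integer points, using that $\tilde\alpha$ and $\tilde\beta$ already coincide pointwise with their $\tau$-images. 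The composition $s \mapsto \tilde F_t(\alpha(s))$ then gives the required $\sigma$-symmetric isotopy. The principal obstacle is this final equivariantization; the argument ultimately appeals to the equivariant isotopy extension theorem for involutions on surfaces with isolated fixed points, which ensures that an ordinary isotopy between $\sigma$-invariant simple arcs whose parametrizations are $\sigma$-symmetric can always be taken $\sigma$-equivariantly.
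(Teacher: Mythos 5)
The key gap is in the final ``equivariantization'' step. The equivariant isotopy extension theorem asserts that a $\sigma$-\emph{equivariant} isotopy of a $\sigma$-invariant submanifold extends to a $\sigma$-equivariant ambient isotopy; it does \emph{not} say that an arbitrary, non-equivariant (ambient) isotopy between two $\sigma$-invariant arcs can be replaced by an equivariant one. What you actually need is the stronger claim that two $\sigma$-symmetric arcs which are merely \emph{ordinarily} isotopic in $\T-1$ are automatically $\sigma$-symmetrically isotopic, and that is precisely the nontrivial content of the proposition --- invoking the name of the theorem does not prove it. Your concrete mechanism (lift the ambient isotopy, modify it inside a fundamental domain for $G$, extend by the half-turn symmetry) is also not workable as stated: the lifted ambient isotopy will in general fail to preserve the boundary of any chosen fundamental domain, so you cannot restrict to the fundamental domain and extend by symmetry without first solving a matching problem along the boundary, which you neither raise nor address.

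The paper sidesteps this difficulty entirely: rather than trying to equivariantize a pre-existing isotopy, it reruns the explicit straightening construction from the proof of Proposition~\ref{unitorus taggable arcs} and checks, deformation by deformation, that each step can be carried out centrally symmetrically in $\reals^2-(2\integers)^2$, using that the lift $\alpha'$ of $\alpha$ and the target segment $\lambda$ are both centrally symmetric about $[a,b]$ (your Proposition~\ref{centrally symmetric}). The resulting centrally symmetric isotopy then projects to a $\sigma$-symmetric isotopy in $\T-1$. Your first two paragraphs --- identifying the slope $b/a$, noting that at least one of $a,b$ is odd, and observing that $\beta$ is $\sigma$-symmetric --- are correct and consistent with the paper. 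The problem is isolated to the upgrading step, which in your write-up is a one-line appeal to a theorem that does not deliver what you need. If you wish to avoid redoing the straightening constructively, one genuine alternative is to work with a $\sigma$-invariant hyperbolic metric on $\T-1$ and unique geodesic representatives, but that would be a materially different argument and you would still owe details on how uniqueness of the geodesic yields a $\sigma$-symmetric isotopy from $\alpha$ (rather than merely a $\sigma$-invariant endpoint of some isotopy).
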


\begin{proof}
Fix a $\sigma$-symmetric curve $\alpha$ satisfying the conditions of Definition~\ref{def: arcs} in $\T-1$.
By Proposition~\ref{unitorus taggable arcs}, $\alpha$ is isotopic to the projection of a straight line segment~$\lambda$ connecting $[0,0]$ to $[2a,2b]$ with rational slope $b/a$ (in standard form).
 Denote by~$\alpha'$ the lift of $\alpha$ to a curve in $\R^2-(2\Z)^2$ with endpoints $[0,0]$ and $[2a,2b]$.

By Proposition~\ref{centrally symmetric}, both of the curves $\lambda$ and $\alpha'$ are centrally symmetric about $[a,b]$. 
We will construct an isotopy between $\lambda$ and $\alpha'$ that is centrally symmetric, and compose this with the covering map to obtain a $\sigma$-symmetric isotopy in $\T-1$.  
The construction is essentially the same as the construction in the proof of \cite[Proposition~4.1]{unitorus} (our Proposition~\ref{unitorus taggable arcs}).
We begin by revisiting the steps in that construction, and then we argue that there is a way to make these deformations in a centrally symmetric manner.
As in \cite[Proposition~4.1]{unitorus}, we assert that our deformations may be made so that $\alpha'$ remains disjoint from all of its even-integer translates.  
Thus, the resulting isotopy projects to an isotopy of taggable arcs in $\T-1$. 

We consider the $2\times 2$ unit squares with even-integer vertices that $\alpha'$ visits between $[0,0]$ and $[2a,2b]$.
If $\alpha'$ does not visit any square, it forms an edge of the square containing $[0,0]$ and $[2,2]$. 
In that case, it is a straight line segment, with rational slope either zero or infinity, so it must be equal to $\lambda$. 
Thus we assume that $\alpha'$ enters a square. 
Up to rotational symmetry, we can assume that $b/a$ is non-negative and finite.
First we minimize the number of squares that $\alpha'$ visits. 
If $\alpha'$ enters and exits a square out of the same edge, then we can deform it so that it never enters the square at all.

In the case where $b/a$ is zero, after making the deformations above, $\alpha'$ intersects the interior of the line segment connecting $[0,0]$ to $[2,0]$ in at most one point and passes through at most two squares: the square containing $[1,1]$ and the square containing $[1,-1]$.
In particular, $\alpha'$ and $\lambda$ are not separated by any even integer points.

Suppose that $b/a$ is strictly positive.
Then the deformed $\alpha'$ exits the square containing $[0,0]$ through an edge that does not contain $[0,0]$ as a vertex, and the first square that $\alpha'$ visits is the square containing $[1,1]$.
Since $\alpha'$ is disjoint from its even-integer translates, it exits out of the top or right side of each square it visits, and ends at the top-right vertex of the last square it enters.

Next we ``straighten'' $\alpha'$ to a straight line segment in the interior of each square it visits (still assuming $b/a>0$) as follows. 
We have deformed $\alpha'$ so that for each square it passes through, it intersects exactly two edges, and it intersects each edge once.
Consider the first square that our curve visits.
Let $p_0$ be the point where the curve intersects an edge of the square upon entering (in this case $[0,0]$), and let $p_1$ be the point where it intersects an edge upon leaving.  
We alter $\alpha'$ by deforming its intersection with this square to the straight line segment connecting $p_0$ to $p_1$. 
We move $p_1$ to avoid intersecting the interior puncture of the square, if necessary, and so that $\alpha'$ is monotonically increasing in that square.
We do this for each each square that $\alpha'$ visits.
The final result is a monotonic polygonal path in $\R^2-(2\Z)^2$. 

We have already remarked that when $b/a$ is zero, $\alpha'$ and $\lambda$ are not separated by any even integer points.
Thus, there is an isotopy taking $\alpha'$ to $\lambda$.
In the proof of \cite[Proposition~4.1]{unitorus} (our Proposition~\ref{unitorus taggable arcs}), it is shown that for $b/a$ positive, $\alpha'$ and $\lambda$ are not separated by any even integer points by showing that the sequence of top and right edges that $\alpha'$ passes through is the same as the corresponding sequence given by $\lambda$.
Thus, the map that sends each point on $\alpha'$ along a straight line to the closest point to it on $\lambda$ is an isotopy in $\R^2-(2\Z)^2$. 
This completes our construction of an isotopy between $\alpha'$ and $\lambda$.

We now argue that deformations taking $\alpha'$ to $\lambda$ described in the previous paragraphs can be made centrally symmetrically. 
Since $\alpha'$ is centrally symmetric about the point $[a,b]$, any deformations we need to make as it passes through $2\times 2$ unit squares (with even-integer vertices) between $[0,0]$ and $[a,b]$ also need to be made for its path between $[a,b]$ and $[2a,2b]$.
Specifically, any deformation that we make to $\alpha'$ between $[0,0]$ and $[a,b]$ so that it enters each square once, will also be made symmetrically to its image between $[a,b]$ and $[2a,2b]$.  
Thus, the intersections of the curve with edges of squares are situated centrally symmetrically about the point $[a,b]$.
When $b/a$ is zero, this proves that the isotopy between $\alpha'$ and $\lambda$ projects to a $\sigma$-symmetric isotopy in $\T-1$, as desired.
When $b/a$ is positive, this ensures that straightening the resulting curve also preserves central symmetry.
Thus we obtain a centrally symmetric isotopy between $\alpha'$ and a monotonic centrally symmetric polygonal path.
Finally, the isotopy that sends a point on this polygonal path to the closest point to it on $\lambda$ is centrally symmetric because both the polygonal path and $\lambda$ are centrally symmetric.
Thus, the isotopy between $\alpha'$ and $\lambda$ can be made centrally symmetric and disjoint from its even-integer translates.
We conclude its projection to $\T-1$ is a $\sigma$-symmetric isotopy between $\alpha$ and the projection of $\lambda$.  
\end{proof}

We can now prove our main result.
\begin{proof}[Proof of Proposition~\ref{prop: tagged arcs}]
Since we can tag taggable arcs arbitrarily and relabel vertices if necessary, it is enough to show that the map which sends $(a,b,\{v_{00},v_{ab}\})$ to $\arc(a,b,\{v_{00},v_{ab}\})$ is a bijection from triples $(a,b,\set{v_{00},v_{ab}})$ such that $b/a$ is the standard form of a rational slope to taggable arcs in $\Sp-4$ having an endpoint at~$v_{00}$. 
Fix a curve $\alpha$ in $\Sp-4$ with an endpoint at $v_{00}$ satisfying Definition~\ref{def: arcs}.
By Proposition~\ref{isotopic = sigma isotopic}, $\phi(\alpha)$ is $\sigma$-symmetrically isotopic to the projection of a straight line segment with rational slope $b/a$ in standard form connecting $[0,0]$ to $[2a,2b]$. 
Then by Proposition~\ref{sigma isotopic}, $\alpha$ is isotopic to the projection of the straight line connecting $[0,0]$ to $[a,b]$.

Moreover, Proposition~\ref{unitorus taggable arcs} implies that two straight line segments connecting $[0,0]$ to $[2a,2b]$ and $[0,0]$ to $[2c,2d]$, with distinct rational slopes $b/a$ and $d/c$ (in standard form), project to arcs that are not even isotopic (let alone $\sigma$-symmetrically isotopic) in $\T-1$, so by Proposition~\ref{sigma isotopic}, we conclude that the map is injective.
\end{proof}


\section{Compatibility of tagged arcs}\label{arc sec} 
Proposition~\ref{prop: tagged arcs} parametrizes tagged arcs by rational slopes, pairs of endpoints, and taggings.
In this section, we describe compatibility of tagged arcs in terms of that parametrization.

\begin{proposition}\label{prop: tagged compat}
Two tagged arcs in the four-punctured sphere whose underlying taggable arcs coincide are compatible if and only if their tagging agrees at exactly one of their endpoints.
Two tagged arcs $\arc(a,b,E)$ and $\arc(c,d,F)$ with distinct underlying taggable arcs are compatible if and only if their taggings agree at each shared endpoint and $|ad-bc|$ equals the number of shared endpoints ($0$, $1$, or $2$).
\end{proposition}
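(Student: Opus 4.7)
The first claim is immediate from Definition~\ref{def: tag}. For the second, the tagging requirement at each shared endpoint is likewise immediate, so the substance reduces to showing that two distinct taggable arcs $\arc(a,b,E)$ and $\arc(c,d,F)$ admit representatives with no interior intersections iff $|ad-bc|=|E\cap F|$. The plan is to lift both arcs to straight-line segments in $\reals^2-\integers^2$ via the covering of Section~\ref{covering sec} and count interior intersections in $\Sp-4$ as $G$-orbits of interior intersection points of the lifts. By standard flat-surface arguments (e.g., the bigon criterion applied to pairs of straight segments in the plane, which cannot cobound a bigon), straight-line representatives realize the minimum interior intersection number in their isotopy classes. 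Since $\gcd(a,b)=1$ makes the $G$-stabilizer of a fixed lift $L_1$ of $\arc(a,b,E)$ trivial, the interior intersections of the two arcs in $\Sp-4$ biject with the interior intersection points of $L_1$ with the union of $G$-translates of any lift $L_2$ of $\arc(c,d,F)$; these translates are exactly the segments $\{X,X+[c,d]\}$ with $X\in\integers^2$ and $X\pmod{2}\in F$.

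The case $ad-bc=0$ is quick: then $(a,b)=(c,d)$ in standard form and distinct arcs force $E\neq F$; by Proposition~\ref{prop: tagged arcs} the two possible $E$'s for a given slope partition the four punctures into disjoint pairs, so $|E\cap F|=0=|ad-bc|$ and parallel lifts are disjoint. For $ad-bc\neq 0$, setting $(u,v):=X-[p_1,q_1]$ where $[p_1,q_1]$ is the starting endpoint of $L_1$, the intersection of the underlying lines lies strictly inside both segments iff $(u,v)$ lies in the open parallelogram $P$ with vertices $(0,0)$, $(a,b)$, $(a-c,b-d)$, and $(-c,-d)$. The parallelogram has area $|ad-bc|$, and because $\gcd(a,b)=\gcd(c,d)=1$ its only boundary lattice points are the four vertices, so Pick's theorem yields exactly $|ad-bc|-1$ interior lattice points inside $P$.

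The decisive step, and the main technical obstacle, is to count how many of these $|ad-bc|-1$ interior lattice points have residue in $F$ modulo~$2$. The intended tool is the central half-turn $\tau$ of $P$ about its center $((a-c)/2,(b-d)/2)$: this is an involution on the interior lattice points of $P$, and its effect on residues modulo~$2$ is translation by $(a-c,b-d)\pmod{2}$, with a single fixed central lattice point precisely when $(a,b)\equiv(c,d)\pmod{2}$. After observing the parity coincidence $|ad-bc|\equiv|E\cap F|\pmod{2}$ (a direct check on the possible combinations of $(a,b),(c,d)\pmod{2}$ and the corresponding admissible $E$ and $F$), a finite case analysis on these residues---tracking how the two residues of $F$ are arranged relative to the $\tau$-pairing of residues on $P$---should confirm that the count is $(|ad-bc|-|E\cap F|)/2$. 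This vanishes exactly when $|ad-bc|=|E\cap F|$, completing the equivalence. The bookkeeping across the several residue cases is the finicky part.
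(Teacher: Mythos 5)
Your approach differs from the paper's. The paper first proves a structural lemma (Lemma~\ref{lem: plane compatibility}): compatibility is equivalent to every pair of lifting lines through preimages of endpoints meeting either nowhere or at an integer point. The forward direction ($|ad-bc|=|E\cap F|$ implies compatibility) is then a direct computation with the intersection formula, and the converse uses Pick's theorem on a \emph{doubled} parallelogram, with a careful argument that compatibility forces the center to be the \emph{only} interior lattice point. You instead lift a single segment, count interior intersections directly as lattice points of a smaller open parallelogram $P$ whose residue mod~$2$ lies in (a translate of) $F$, and propose that this count equals $(|ad-bc|-|E\cap F|)/2$. Your setup is correct: the stabilizer observation, the parametrization of $P$ with vertices $(0,0),(a,b),(a-c,b-d),(-c,-d)$, the Pick count $|ad-bc|-1$ of interior lattice points, and the parity check $|ad-bc|\equiv|E\cap F|\pmod 2$ all hold.

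The gap is in what you call the decisive step. The half-turn $\tau$ of $P$ about its center acts on residues mod~$2$ by translation by $(a+c,b+d)$; when $(a,b)\equiv(c,d)\pmod 2$ it fixes each residue class and has a single lattice fixed point, while otherwise it swaps residue classes in pairs. Either way, $\tau$ yields only \emph{parity} constraints on the residue-classwise counts $n_e$ of interior lattice points (each $n_e$ is paired with $n_{\tau(e)}$, with one $n_e$ odd in the fixed-point case), not the exact values. For instance, with $(a,b)=(1,0)$, $(c,d)=(1,-2)$ (so $|ad-bc|=4$, $(a,b)\equiv(c,d)$), the interior lattice points have residues $(0,1),(0,0),(0,1)$; the center has residue $(0,0)\in E'$, so the $\tau$ argument says nothing about why the two points of residue $(0,1)$ exist, yet they are exactly what makes $N=2$ when $|E\cap F|=0$. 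The distribution of residues among the $|ad-bc|-1$ interior lattice points depends on the full geometry of the sublattice $\integers(a,b)+\integers(c,d)$ relative to $2\integers^2$, and neither $\tau$ nor the parity coincidence pins it down. The ``finite case analysis'' you defer to is therefore the real content of the proof, and it is not at all clear it closes without a substantially different idea (for example, analyzing $\integers^2/(\Lambda\cap 2\integers^2)$ or using a Smith-normal-form coordinate change). As written, the formula $N=(|ad-bc|-|E\cap F|)/2$ is asserted, not proved, and both directions of the proposition rest on it.
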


Before we prove Proposition~\ref{prop: tagged compat}, we introduce some terminology suggested by the proposition.
We call two distinct tagged arcs a \newword{coinciding pair} if their underlying taggable arcs coincide and their taggings make them compatible as described in Proposition~\ref{prop: tagged compat}.  
Rational slopes $b/a$ and $d/c$ with $|ad-bc|=1$ are sometimes called Farey pairs, or Farey neighbors, because they are adjacent in some Farey sequence.  
(See for example  \cite[Section~4.5]{Graham} or \cite[Chapter 1]{Hatcher2}.)
Accordingly, we call \emph{non-coinciding} compatible pairs $\arc(a,b,E)$ and $\arc(c,d,F)$ \newword{Farey-0 pairs} (or \newword{parallel pairs}), \newword{Farey-1 pairs}, or \newword{Farey-2 pairs} according to their number of shared endpoints. 
We overload the terms by also referring to ``Farey-1 (or Farey-2) pairs of slopes'' and ``Farey-1 (or Farey-2) compatible pairs of slopes.''    
A \newword{Farey-1 triple} is a collection of three slopes, each pair of which is a Farey-1 compatible pair.

\begin{lemma}\label{lem: plane compatibility}  
Two distinct taggable arcs $\arc(a,b,\bar{E})$ and $\arc(c,d,\bar{F})$ are compatible if and only if the following condition holds: Given any line in the plane with slope $b/a$ passing through a point in the preimage of $\bar{E}$ (under the covering map from $\R^2-\Z^2$ to $\Sp-4$) and any line with slope $d/c$ passing through a point in the preimage of $\bar{F}$, either the two lines are parallel or their intersection is an integer point. 
\end{lemma}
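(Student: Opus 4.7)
The plan is to translate compatibility in the surface $\Sp-4$ into a geometric condition on straight-line lifts in the covering space $\R^2-\Z^2$. Recall that the lifts of $\arc(a,b,\bar E)$ are precisely the straight line segments from $[p,q]$ to $[p+a,q+b]$ with $\set{v_{pq},v_{p+a,q+b}}=\bar E$. Taken collectively, these lifts fill out each line of slope $b/a$ through a preimage point of $\bar E$, partitioning such a line into consecutive unit lifts joined at integer points. The analogous description holds for $\arc(c,d,\bar F)$.

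For the forward (``if'') direction, I would use the straight-line representatives directly. Under the hypothesis, any lift of the first arc and any lift of the second arc lie on parallel lines (and are hence disjoint) or on lines whose unique intersection is an integer point. Because the slopes are in standard form, neither arc has any integer point in its interior, so any integer-point meeting of two lifts must occur at a shared endpoint. Projecting to $\Sp-4$, the straight-line representatives meet only at shared punctures, realizing the required disjointness in $\Sp-4\setminus\M$.

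For the reverse (``only if'') direction, I would argue the contrapositive. Suppose a line $L$ of slope $b/a$ through a preimage of $\bar E$ meets a line $L'$ of slope $d/c$ through a preimage of $\bar F$ at a non-integer point $P$. Then $P$ lies in the interiors of a specific lift $\alpha'$ on $L$ and a specific lift $\beta'$ on $L'$, where they cross transversally. I would then invoke the bigon criterion for arcs on a surface: the straight-line representatives of the two arcs are in minimal position in $\Sp-4$ because they admit no co-bounding bigon. Any such bigon in $\Sp-4$ would lift homeomorphically (since a bigon is simply connected) to a bigon in $\R^2-\Z^2$ bounded by sub-arcs of one lift of each arc; but any two distinct straight line segments in the plane meet in at most one point and therefore cannot bound a bigon. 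Hence the geometric intersection number of the arcs in $\Sp-4$ equals the number of non-integer lift intersections, which is at least one (arising from $P$), so the arcs are not compatible.

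The main obstacle is the careful application of the bigon criterion in the covering space, particularly when two lifts share an integer-point endpoint (corresponding to a shared puncture in $\Sp-4$, an allowed meeting under our definition of compatibility). Such shared-endpoint meetings must be distinguished from genuine bigon corners, which should correspond only to transverse non-endpoint intersections; since two straight segments sharing an endpoint and crossing transversally in their interiors bound a triangular region with only one bigon-type corner, no bigon actually arises there either. With this bookkeeping, a single non-integer intersection of $L$ and $L'$ produces an irreducible transverse crossing in $\Sp-4$ that survives every isotopy, contradicting compatibility and completing the proof.
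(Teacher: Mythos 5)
Your proof is correct and follows the same overall strategy as the paper: translate compatibility into a statement about intersections of straight lines in the $\Z^2$-punctured plane. Where the paper's write-up compresses the ``only if'' direction into a single phrase (``Lifting this characterization to the $\Z^2$-punctured plane, \dots''), implicitly treating the straight-line representatives as already realizing the minimal intersection, you make this step explicit via the bigon criterion: a hypothetical bigon is simply connected, lifts to an embedded disc in $\R^2-\Z^2$ bounded by a subarc of one lift segment of each arc, and these two straight segments would then meet at both corners of the lifted bigon, contradicting the fact that two distinct lines meet at most once. That is the correct missing justification, and your version is a more complete form of the paper's argument. One small remark: your caution about the shared-endpoint ``half-bigon'' configuration is unnecessary in this rigid setting, since two line segments of distinct slopes sharing an integer endpoint cannot also cross transversally in their interiors (the underlying lines would then intersect twice), so no such configuration appears in the cover.
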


\begin{proof}
By definition, $\arc(a,b,\bar{E})$ and $\arc(c,d,\bar{F})$ are compatible if and only if, up to isotopy relative to $\M$, they intersect precisely at their shared endpoints. Lifting this characterization to the $\integers^2$-punctured plane, $\arc(a,b,\bar{E})$ and $\arc(c,d,\bar{F})$ are compatible if and only if their respective fibers under the covering map intersect precisely at the preimage of these shared endpoints. 
By Proposition~\ref{prop: tagged arcs}, these fibers are the set of lines of slope $b/a$ passing through points in the preimage of $\bar{E}$, and the set of lines of slope $d/c$ passing through points in the preimage of $\bar{F}$, respectively. The integer points on these lines are exactly the preimage of endpoints of the two arcs. The lemma follows. 
\end{proof}

\begin{proof}[Proof of Proposition~\ref{prop: tagged compat}]
It suffices to prove that distinct taggable arcs $\arc(a,b,\bar{E})$ and $\arc(c,d,\bar{F})$ are compatible if and only if $|ad-bc| = |\bar{E} \cap \bar{F}|$, the number of shared endpoints. We will apply Lemma~\ref{lem: plane compatibility}: let $\ell_1$ and $\ell_2$ be arbitrary lines of slope $b/a$ and $d/c$, respectively, passing through respective integer points $[m_1, p_1]$ in the preimage of $\bar{E}$ and $[m_2, p_2]$ in the preimage of $\bar{F}$.

The zero-shared-endpoints case is dispensed with easily. If $\bar{E} \cap \bar{F} = \emptyset$, then $\arc(a,b,\bar{E})$ and $\arc(c,d,\bar{F})$ are compatible if and only if $\ell_1 \cap \ell_2 = \emptyset$, or in other words, $\ell_1$ and $\ell_2$ are parallel. This occurs if and only if $b/a=d/c$, or equivalently, $|ad-bc|=0$.

The one- and two-shared-endpoint cases require more care. In these cases, $b/a \neq d/c$. Thus $\ell_1$ and $\ell_2$ intersect at {\em some} point $[x,y]$, and we must show that $|ad-bc|$ equals the number of shared endpoints if and only if $[x,y] \in \integers^2$. We begin by proving the forward implication. 
A simple computation yields
\begin{equation}
\label{eq: intersection}
x=\frac{adm_2-bcm_1+ac(p_1-p_2)}{ad-bc}, \ y = \frac{adp_1-bcp_2 + bd(m_2-m_1)}{ad-bc}
\end{equation} 
If $|ad-bc| = |\bar{E} \cap \bar{F}| =1$, then Equation~\eqref{eq: intersection} implies that $[x,y]$ is an integer point. 
If $|ad-bc| = |\bar{E} \cap \bar{F}| = 2$, then either $v_{m_1p_1}=v_{m_2p_2}$ or $v_{m_1p_1}=v_{m_2+c,p_2+d}$. We assume without loss of generality that the former holds, since $\ell_2$ also passes through the integer point $[m_2', p_2'] := [m_2+c, p_2+d]$ and we can rewrite Equation~\eqref{eq: intersection} accordingly. 
Since the indices on punctures are interpreted modulo 2, we have $m_1 \equiv m_2 \mod 2$ and $p_1 \equiv p_2 \mod 2$. From there, it is easy to use Equation~\eqref{eq: intersection} to show $[x,y] \in \Z^2$.

Conversely, suppose $[x,y] \in \integers^2$ so that the two arcs are compatible. We show that $|ad-bc|$ equals the number of shared endpoints using Pick's Theorem: 
The area of a polygon whose vertices are integer lattice points is given by $I + B/2 -1$ where~$I$ is the number of lattice points in the polygon's interior and $B$ is the number of lattice points on the polygon's boundary.  
For a proof of Pick's Theorem, see for example \cite[Theorem~13.51]{Coxeter}.
We cover the one-shared-endpoint case in depth, and then provide an outline of the proof of the two-shared-endpoint case, which is similar.

Given $\bar{E} \cap \bar{F} = \set{v_{xy}}$,  consider the parallelogram $P$ in $\R^2$ with vertices at $[x,y]$, $[x+2a,x+2b]$, $[x+2c, y+2d]$, and $[x+2a+2c, y+2b+2d]$ as depicted in the left picture of Figure~\ref{fig: parallelograms}.   
We make the following observations:
\begin{itemize}
\item Each vertex of $P$ is a lift of the puncture $v_{xy}$.
\item Each side of $P$ with slope $b/a$ is composed of two lifts of $\arc(a,b,\bar{E})$ and each side with slope $d/c$ is composed of two lifts of $\arc(c,d,\bar{F})$.
\item Since taggable arcs have no punctures in their interior, the lifts of $\arc(a,b,\bar{E})$ and $\arc(c,d,\bar{F})$ cannot pass through integer points. Thus $P$ has precisely eight integer points on its boundary, namely: 
\begin{itemize}
\item the four corner vertices, each a lift of the puncture $v_{xy}$,
\item midpoints $[x+a, y+b]$ and $[x+a+2c, x+b+2d]$, both lifts of the distinct puncture $v_{x+a,y+b}$, and 
\item midpoints $[x+c,y+d]$ and $[x+2a+c, y+2b+d]$, both lifts of $v_{x+c,y+d}$, a third distinct puncture.
\end{itemize}
\item $P$ contains at least one integer point in its interior, namely the center $[x+a+c, y+b+d]$, a lift of the fourth puncture.
\item $P$ has area $4|ad-bc|$.
\end{itemize}
By Pick's Theorem, the total number of integer points in the interior of $P$ is given by $4|ad-bc|-3$. 
Therefore, to conclude that $|ad-bc|=1$, we need to show that the center $[x+a+c,y+b+d]$ is the only integer point in the interior of $P$.

To that end, let $[x',y']$ be any other point in the interior of $P$. We show that $[x',y']$ is not an integer point by proving that it cannot be the lift of a puncture, analyzing two cases. We define the \newword{midsegments} of $P$ to be the two line segments connecting the midpoints of opposite sides of $P$.
First, assume $[x',y']$ does not lie on either midsegment. Then the line of slope $b/a$ passing through $[x',y']$ intersects the boundary of $P$ at non-integer points along $(d/c)$-sloped sides of $P$. Since $\arc(a,b,\bar{E})$ and $\arc(c,d,\bar{F})$ are compatible, Lemma~\ref{lem: plane compatibility} implies that $[x',y']$ is a lift of neither $v_{xy}$ nor $v_{x+a,y+b}$. 
Similarly, considering the line of slope $d/c$ through $[x',y']$, we see that $[x',y']$ is also not a lift of $v_{x+c,y+d}$. 
Finally, we show $[x',y']$ is not a lift of the fourth puncture $v_{x+a+c, y+b+d}$. 
Since lifts of the same puncture are even-integer translates of one another, any line segment connecting two such lifts must pass through an integer point which is the lift of a distinct puncture. 
Thus if $[x',y']$ were a lift of $v_{x+a+c, y+b+d}$, then the line segment connecting it to the center $[x+a+c, y+b+d]$ of $P$ would necessarily pass through a lift of one of the other three punctures: $v_{xy}, v_{x+a,y+b}$, or $v_{x+c,y+d}$. But every point in the interior of the line segment connecting $[x',y']$ to the center of $P$ is an interior point of $P$ which does not lie on a midsegment, and we argued above that no such point can be a lift of $v_{xy}, v_{x+a,y+b}$, or $v_{x+c,y+d}$.

 Now, assume that $[x', y']$ lies on the midsegment connecting $[x+a, y+b]$ to $[x+a+2c, x+b+2d]$. The line of slope $b/a$ passing through $[x',y']$ intersects the boundary of $P$ at non-integer points along the $(d/c)$-sloped sides of $P$, and hence $[x',y']$ cannot be a lift of either $v_{xy}$ or $v_{x+a, y+b}$ as discussed above. If $[x', y']$ were a lift of $v_{x+c,y+d}$, then the line segment connecting $[x',y']$ to $[x+c, y+d]$, a lift of the same puncture, would pass through an integer point. But every point on the interior of this line segment is a non-midsegment interior point, and we've already eliminated the possibility of any such point being integral. We can now rule out the possibility of $[x',y']$ being a lift of $v_{x+a+c,y+b+d}$ by the same reasoning used above in the non-midsegment case. The argument for when $[x', y']$ lies on the other midsegment of $P$ is similar, thereby completing our one-shared-endpoint proof.

The proof when $\arc(a,b,\bar{E})$ and $\arc(c,d,\bar{F})$ share two endpoints is analogous; in this case, we consider the parallelogram $P'$ with vertices at $[x,y]$, $[x+a,y+b]$, $[x+c, y+d]$, and $[x+a+c, y+b+d]$, as shown in the right picture of Figure~\ref{fig: parallelograms}. 
Observe that each side of slope $b/a$ is a single lift of $\arc(a,b,\bar{E})$ while each side of slope $d/c$ is a single lift of $\arc(c,d,\bar{F})$, so that $v_{x+a,y+b} = v_{x+c, y+d}$, $v_{xy}=v_{x+a+c, y+b+d}$ and $P'$ has precisely 4 integer points on its boundary. 
It follows by Pick's Theorem that the total number of points in the interior of $P'$ is given by $|ad-bc|-1$. 
Since $[x,y]$ and $[x+a+c, y+b+d]$ are equal modulo 2, there is at least one integer point in the interior of $P'$, namely, the center $[x+\frac{a+c}{2}, y+\frac{b+d}{2}]$. 
We show that this is the only interior integer point by ruling out the possibility of any other interior lifts of punctures using the same techniques as in the one-shared-endpoint case, and thereby conclude that $|ad-bc|=2$.
\end{proof}

\begin{figure}
\centering
  \includegraphics{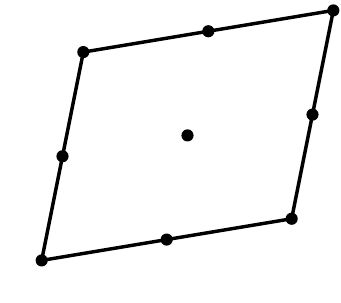}\begin{picture}(0,0)(86,-12)
  \put(-12,-10){\small$[x,y]$}
  \put(-49,28){\small$[x+c,y+d]$}
  \put(-51,58){\small$[x+2c,y+2d]$}
  \put(25,-5){\small$[x+a,y+b]$}
  \put(75,10){\small$[x+2a,y+2b]$}
  \end{picture}
   \qquad
  \qquad
  \qquad
  \qquad
  \quad
  \includegraphics{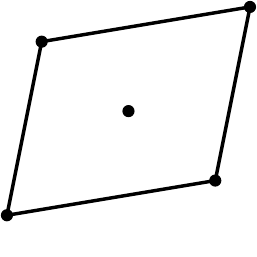}\begin{picture}(0,0)(72,-12)
  \put(-12,-10){\small$[x,y]$}
  \put(-44,49){\small$[x+c,y+d]$}
  \put(63,7){\small$[x+a,y+b]$}
  \end{picture}
\caption{Parallelograms in $\reals^2$ used in the proof of Proposition~\ref{prop: tagged compat}}
\label{fig: parallelograms}
\end{figure}


\section{Tagged triangulations}\label{tagged tri}

With a description of compatibility of tagged arcs in hand, we describe the tagged triangulations of the four-punctured sphere.
By \cite[Theorem~7.9]{fst}, each tagged triangulation has the same number of arcs, and we have seen in Figure~\ref{tri and mat} that this number is $6$.
We illustrate the six combinatorial types of tagged triangulations in Table~\ref{table:triangulations}.
These pictures are drawn, not in the sphere illustrated in Figure~\ref{modG fig}, but rather in a more conventional sphere: the plane of the page compactified with a point at infinity.   
In the right column of Table~\ref{table:triangulations}, $\M$ is the set $\set{v_{00},v_{01},v_{10},v_{11}}$ of punctures of the four-punctured sphere.
We use the notation $[i,j]<[k,l] \mod 2$ to refer to the total order $[0,0] <[0,1] <[1,0] <[ 1,1]$ on $(\integers/2\integers)^2$.  

\begin{proposition}\label{all tri}
The tagged triangulations of the four-punctured sphere are shown in Table \ref{table:triangulations}.  The left column of the table shows the combinatorial type of each triangulation and the right column describes the data which uniquely specifies each triangulation of that type. 
\end{proposition}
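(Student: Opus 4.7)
The plan is to begin from the fact, cited just above, that every tagged triangulation of the four-punctured sphere has exactly six arcs, and to combine this with Proposition~\ref{prop: tagged arcs} (the parametrization of tagged arcs) and Proposition~\ref{prop: tagged compat} (the compatibility criterion) to enumerate maximal compatible 6-tuples. The overall strategy is first to reduce the enumeration to tagged triangulations with no coinciding pair and with tags agreeing at every puncture, then to reduce further to the all-plain case, then to classify all-plain triangulations by the ``Farey'' compatibility condition on slopes, and finally to handle the coinciding-pair case as a degenerate stratum.

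First, I would use Definition~\ref{def: tag} to observe that, in any tagged triangulation $T$, two arcs sharing an endpoint $v$ must either form a coinciding pair or carry the same tag at $v$. Thus, outside of coinciding pairs, the tags at each puncture are consistent. For a triangulation with consistent tags at every puncture, the standard involution of flipping all tags at a chosen subset of punctures (an operation on tagged triangulations compatible with the combinatorial structure) reduces the classification to triangulations in which every tag is plain everywhere; the six combinatorial types in Table~\ref{table:triangulations} should then emerge as the orbits of the all-plain types under all possible puncture-by-puncture tag flips, together with the coinciding-pair case.

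The heart of the argument is the enumeration of all-plain tagged triangulations. By Proposition~\ref{prop: tagged compat}, a collection $\set{\arc(a_i,b_i,E_i)}$ of pairwise distinct all-plain arcs is pairwise compatible if and only if $|a_ib_j - a_jb_i| = |E_i \cap E_i|$ for every pair, where the right-hand side is $0$, $1$, or $2$. Given that the triangulation has six arcs distributed among four punctures with the total endpoint count equal to $12$, I would first catalogue the possible \emph{slope multisets} that arise: each slope $b/a$ has exactly two underlying taggable arcs, parallel arcs contribute a Farey-0 pair forcing disjoint endpoint sets, and any two non-parallel arcs form either a Farey-1 or Farey-2 pair. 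Translating to the covering-space picture of Section~\ref{covering sec}, the slopes and endpoint parity data of the arcs correspond to a configuration of lines in $\reals^2$ whose integer intersection points are controlled by the Farey condition, which should rapidly cut down the admissible slope multisets. Finally, for each admissible slope multiset I would record the allowable endpoint assignments (modulo $2$), check that the resulting configuration is a maximal compatible 6-tuple rather than extensible, and match it against a row of Table~\ref{table:triangulations}, handling the coinciding-pair case by noting that a coinciding pair imposes that the four remaining arcs be mutually compatible with both members of the pair and with each other, which pins down the unique corresponding combinatorial type.

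The main obstacle I expect is step three: carrying out the case analysis of slope multisets and endpoint patterns without missing a case, since the Farey condition $|a_ib_j - a_jb_i| \in \set{0,1,2}$ admits many small-slope solutions and the indexing by endpoint pairs $E_i$ introduces an independent layer of parity bookkeeping. The cleanest way I see to organize this is to bound the spread of the slopes (using that any three slopes must form a Farey-$1$ triple or include a Farey-$0$ or Farey-$2$ pair), which should force the slopes to lie in a small Farey neighborhood and reduce the enumeration to finitely many configurations that can be listed by hand and matched against the table.
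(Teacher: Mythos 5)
Your proposal takes a genuinely different organizing principle from the paper's proof: you classify first by slope multisets and endpoint patterns (with a reduction modulo tag flips), whereas the paper classifies first by the \emph{degree sequence} of the triangulation regarded as a graph on the four punctures. The paper observes that the total degree is $12$ and, by maximality, every vertex has degree at least $2$, so the degree sequence is one of $(3,3,3,3)$, $(2,3,3,4)$, $(2,2,4,4)$, $(2,2,3,5)$, $(2,2,2,6)$. It then rules out $(2,3,3,4)$ and works through each of the remaining four degree sequences using two short lemmas — at most three pairwise Farey-$1$ arcs can share a vertex, and a Farey-$2$ pair cuts the sphere into two once-punctured digons, so there is at most one Farey-$2$ pair and the remaining vertices have degree exactly two. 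Coinciding pairs, Farey-$2$ pairs, and Farey-$1$ triples fall out of this analysis in a structured way, because a coinciding pair forces one endpoint to have degree exactly $2$.

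The main gap in your proposal is the treatment of the coinciding-pair case. You describe it as ``a degenerate stratum'' and assert that a coinciding pair ``pins down the unique corresponding combinatorial type,'' but in fact four of the six types in Table~\ref{table:triangulations} (types III, IV, V, VI) involve coinciding pairs, distinguished by whether there are one, two, or three of them and by which vertices carry them. The degree-sequence invariant handles this automatically: a coinciding pair forces a degree-$2$ vertex, so $(2,2,4,4)$ can arise with or without coinciding pairs (types II and III), $(2,2,3,5)$ always has exactly one (type IV), and $(2,2,2,6)$ has either two plus a Farey-$2$ pair (type V) or three (type VI). Your outline does not articulate an invariant that distinguishes these cases. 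Relatedly, your proposed reduction ``to the all-plain case'' does not directly apply once a coinciding pair is present, since a coinciding pair by definition has one plain and one notched endpoint at a shared vertex, so such a triangulation is never tag-consistent at every puncture; the tag-flip symmetry moves tag data around within a combinatorial type rather than reducing across types. Finally, the plan to ``bound the spread of the slopes'' is not the right invariant: the slopes in a triangulation can be any Farey triple or Farey-$2$ pair, unbounded in magnitude; what is bounded is the \emph{number of distinct slopes} and the pattern of endpoint assignments, which the paper's approach captures through the degree sequence. With those repairs the slope-first route could be made to work, but as written the coinciding-pair analysis and the bounding argument are not sound.
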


\begin{table}[h!]\setlength{\tabcolsep}{.5cm}
	\begin{center}
	\begin{tabular}{ c   p{6cm}  }
	\toprule
	Tagged triangulation  &  Data to determine the triangulation \\ \hline
     \raisebox{-\totalheight}{\scalebox{0.9}{\includegraphics{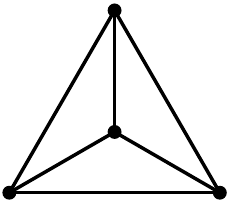}\begin{picture}(0,0)(33,-20)\put(-55,37){\textrm{I}.}\put(-47, -16){$v_{00}$}\put(-22, 10){$p$}\put(-15, -2){$q$}\put(-3, -16){$r$}\end{picture}}}
      & \begin{itemize}[topsep=0pt, leftmargin=10pt, labelindent=0pt, itemindent=0pt]
      \item Farey-1 triple $p,q,r \in \mathbb{Q}\cup \infty$ 
      \item taggings at all vertices
      \end{itemize}
	\\ \bottomrule
	
	 \raisebox{-\totalheight}{\scalebox{0.9}{\includegraphics{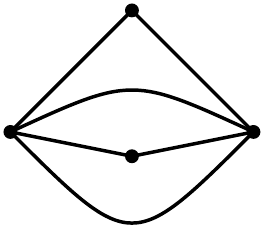}\begin{picture}(0,0)(33,-20)\put(-60, 45){\textrm{II}.}\put(-50,5){$v$}\put(-13, 22){$p$}\put(-13, -14){$q$}\end{picture}}}
	      & \begin{itemize}[topsep=0pt, leftmargin=10pt, labelindent=0pt, itemindent=0pt]
      \item Farey-2 pair $p=b/a,q \in \mathbb{Q}\cup \infty$
      \item $v=v_{ij} \in \M$ such that \newline$[i,j] < [i+a,j+b] \mod 2$ 
      \item taggings at all vertices
      \end{itemize}
	\\ \bottomrule
	
	 \raisebox{-\totalheight}{\scalebox{0.9}{\includegraphics{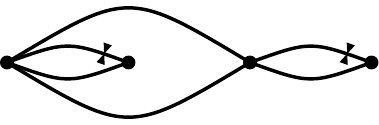}\begin{picture}(-10,-10)(33,-20)\put(-82, 15){\textrm{III}.} \put(-85,-5){$v$}\put(-55, 15){$p$}\put(-55,-22){$q$}\put(-35,-4){$v'$}\end{picture}}}
      & \begin{itemize}[topsep=0pt, leftmargin=10pt, labelindent=0pt, itemindent=0pt]
      \item Farey-2 pair $p=b/a,q \in \mathbb{Q}\cup \infty$
      \item $v=v_{ij} \in \M$ such that \newline $[i,j] < [i+a,j+b] \mod 2$ 
      \item $v' \in \M \setminus \{v_{ij},v_{i+a,j+b}\}$
      \item taggings at $v_{ij},v_{i+a,j+b}$
      \end{itemize}
	\\ \bottomrule
	
	 \raisebox{-\totalheight}{\scalebox{0.9}{\includegraphics{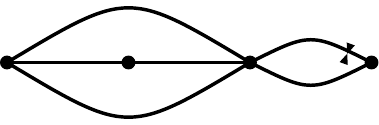}\begin{picture}(0,0)(33,-20)\put(-77,15){\textrm{IV}.}\put(-5,2){$v$}\put(-25, 14){$p$}\put(-25,-19){$q$}\put(35,-3){$v'$}\end{picture}}}
      & \begin{itemize}[topsep=0pt, leftmargin=10pt, labelindent=0pt, itemindent=0pt]
      \item Farey-2 pair $p=b/a,q \in \mathbb{Q}\cup \infty$
      \item $v=v_{ij} \in \M$  
      \item $v' \in \M \setminus \{v_{ij},v_{i+a,j+b}\}$
      \item taggings at $ \M \setminus \{v'\}$
      \end{itemize}
	\\ \bottomrule
	
	\raisebox{-\totalheight}{\scalebox{0.9}{\includegraphics{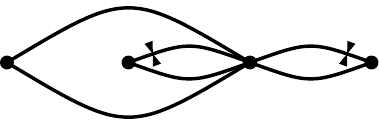}\begin{picture}(0,0)(33,-20)\put(-77,15){\textrm{V}.}\put(-5,2){$v$}\put(-25, 14){$p$}\put(-25,-19){$q$}\end{picture}}}
      & \begin{itemize}[topsep=0pt, leftmargin=10pt, labelindent=0pt, itemindent=0pt]
      \item Farey-2 pair $p=b/a,q \in \mathbb{Q}\cup \infty$
      \item $v =v_{ij}\in \M$  
      \item taggings at $v_{ij}, v_{i+a,j+b}$
      \end{itemize}
	\\ \bottomrule
	
	\raisebox{-\totalheight}{\scalebox{0.9}{\includegraphics{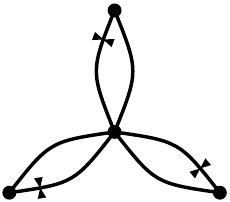}\begin{picture}(0,0)(33,-20)\put(-55,35){\textrm{VI}.}\put(-3,-10){$v$}\put(-22,-1){$p$}\put(16,0){$q$}\put(-12,15){$r$}\end{picture}}}
      & \begin{itemize}[topsep=0pt, leftmargin=10pt, labelindent=0pt, itemindent=0pt]
      \item Farey-1 triple $p,q,r \in \mathbb{Q}\cup \infty$
      \item $v \in \M$  
      \item tagging at $v$
      \end{itemize}
	\\ \bottomrule
	\end{tabular}
	\caption{Tagged triangulations of the four-punctured sphere.}
\label{table:triangulations}
	\end{center}
	\end{table}
	
The following simple observations will be used repeatedly in the proof of Proposition \ref{all tri}.  The first holds because a Farey-1 pair of arcs share exactly one endpoint.
The second holds because a Farey-2 pair of arcs cut the four-punctured sphere into two once-punctured digons. 

\begin{lemma} \label{lemma: three arcs}
In a tagged triangulation of the four-punctured sphere, at most three arcs incident to a single vertex are pairwise Farey-1 compatible.
\end{lemma}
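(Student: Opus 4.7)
The plan is to argue by a simple counting argument using the number of punctures in $\Sp-4$. Suppose, toward a contradiction, that there are four arcs $\alpha_1,\alpha_2,\alpha_3,\alpha_4$ in a tagged triangulation, all incident to a common vertex $v$ and pairwise Farey-1 compatible. By Lemma~\ref{no loops}, no arc in the four-punctured sphere is a loop, so each $\alpha_i$ has a second endpoint $v_i \in \M \setminus \{v\}$.

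Next I would use Proposition~\ref{prop: tagged compat} to rule out repeats among the $v_i$'s. A Farey-1 pair shares exactly one endpoint. Since each pair $\alpha_i, \alpha_j$ is incident to $v$, they already share $v$; thus they share no other endpoint, i.e., $v_i \neq v_j$ for all $i \neq j$. (If instead $v_i = v_j$ for some $i \neq j$, the arcs would share both endpoints, so by Proposition~\ref{prop: tagged compat} the compatibility would force $|a_i b_j - b_i a_j| = 2$, contradicting Farey-1.)

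Finally, the four values $v_1, v_2, v_3, v_4$ would then be four distinct elements of $\M \setminus \{v\}$, which has only three elements. This is the desired contradiction, so at most three arcs incident to $v$ can be pairwise Farey-1 compatible. There is no real obstacle here; the only subtlety is the initial reduction that rules out loops and same-endpoint pairs via Lemma~\ref{no loops} and Proposition~\ref{prop: tagged compat}, after which the argument is a pigeonhole count on the three remaining punctures.
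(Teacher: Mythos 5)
Your proof is correct and takes essentially the same approach as the paper, which justifies the lemma with the single observation that ``a Farey-1 pair of arcs share exactly one endpoint''; your write-up simply fleshes out the implicit pigeonhole argument (invoking Lemma~\ref{no loops} to rule out loops, and the one-shared-endpoint property of Farey-1 pairs to force the four other endpoints to be distinct among only three remaining punctures).
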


\begin{lemma}\label{lemma: Farey-2}
Suppose $T$ is a tagged triangulation of the four-punctured sphere.
\begin{itemize}
 \item  $T$ contains at most one Farey-2 pair of arcs.
 \item  If $T$ contains a Farey-2 pair of arcs, then each puncture not incident to the Farey-2 pair is incident to exactly two arcs.
 \item If $T$ contains a Farey-2 pair of arcs, then the slope of any other arc of $T$ is uniquely determined by the endpoints of the arc and the slopes of the Farey-2 pair. 
 \end{itemize}
\end{lemma}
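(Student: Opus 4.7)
The plan is to prove the three items in the order $(2)$, $(1)$, $(3)$, since both $(1)$ and $(3)$ rely on the structural information obtained in $(2)$.

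For $(2)$, I will first show that the Farey-2 pair $\gamma_1,\gamma_2$ with shared endpoints $\{u,v\}$ cuts the sphere into two once-punctured digons, each containing exactly one of the two remaining punctures $w_1,w_2$. Neither digon can be unpunctured, for otherwise $\gamma_1$ and $\gamma_2$ would cobound a disk disjoint from $\M$ and hence be isotopic rel $\M$, contradicting their distinctness as a Farey-2 pair; and neither can contain both remaining punctures, since then the other digon would be unpunctured and produce the same contradiction. A standard Euler-characteristic count shows that any triangulation of a once-punctured digon uses exactly two interior arcs. Since Lemma~\ref{no loops} prohibits any loop in the sphere, and in particular any loop at a boundary corner of a digon enclosing its interior puncture, no self-folded triangle can appear in this triangulation; hence each of the two interior arcs must be incident to the interior puncture, which gives $(2)$.

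For $(1)$, I will suppose a second Farey-2 pair $\gamma_3,\gamma_4$ exists in $T$ with shared endpoints $\{u',v'\}$ and do a case analysis on $|\{u',v'\}\cap\{u,v\}|$. If this intersection has size two, then compatibility forces $\gamma_3,\gamma_4$ to lie inside the digons of the first pair as arcs from $u$ to $v$ not touching the interior punctures; combined with the two arcs at each interior puncture forced by $(2)$, this exceeds the six-arc budget of Figure~\ref{tri and mat}. If the intersection has size one, then both arcs of the second pair emanate from the same boundary corner of one digon and terminate at its interior puncture, so by $(2)$ these are the only arcs at that interior puncture, leaving the other boundary corner disconnected from it and the digon untriangulated. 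If the intersection is empty, the second pair must connect the two interior punctures, which lie in different digons, and therefore intersects the first pair, violating compatibility.

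For $(3)$, each of the four non-Farey-2-pair arcs of $T$ connects a boundary corner ($u$ or $v$) to an interior puncture ($w_1$ or $w_2$), so by $(2)$ it shares exactly one endpoint with each arc of the Farey-2 pair. Writing $p=b/a$ and $q=d/c$, Proposition~\ref{prop: tagged compat} forces the new slope $y/x$ to satisfy $|ay-bx|=|cy-dx|=1$. Since $|ad-bc|=2$, this linear system has exactly two solutions up to overall sign, namely $(x,y)=\tfrac{1}{2}(c-a,d-b)$ and $(x,y)=\tfrac{1}{2}(c+a,d+b)$, both automatically in standard form because $\gcd(x,y)$ divides $|ay-bx|=1$. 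These two solutions differ by $(a,b)$, which modulo $2$ is precisely the parity vector separating $u$ from $v$; consequently the two candidate slopes produce arcs with distinct endpoint sets, and the endpoints of the arc uniquely select one of them.

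The main obstacle I anticipate is in step $(2)$: rigorously excluding any alternative triangulation of a once-punctured digon, most notably the self-folded triangle formed by a loop at a boundary corner enclosing the interior puncture together with an arc from the corner to the puncture. Such a loop would bound a once-punctured monogon in the sphere and is excluded by Definition~\ref{def: arcs}, as recorded in Lemma~\ref{no loops}. Once $(2)$ is secured, the case analysis in $(1)$ and the linear algebra in $(3)$ are essentially bookkeeping.
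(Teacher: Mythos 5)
Your proof is correct and hinges on the same central observation the paper records in a single sentence --- that a Farey-2 pair of arcs cuts the four-punctured sphere into two once-punctured digons --- from which the paper treats all three items as immediate. You supply the detailed verifications the paper leaves implicit: the Euler count showing a once-punctured digon is triangulated by exactly two interior arcs (both incident to the puncture since loops are excluded), the case analysis by number of shared endpoints to rule out a second Farey-2 pair, and the $2\times2$ linear system with determinant $\pm2$ giving exactly two candidate slopes which the arc's endpoints then disambiguate.
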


\begin{proof}[Proof of Proposition~\ref{all tri}]

Ignoring taggings, a tagged triangulation is in particular a graph.
We first show that any tagged triangulation of the four-punctured sphere has the degree sequence of a diagram shown in Table \ref{table:triangulations}.
We then examine each possible degree sequence, describing the constraints placed on the triangulation by its degree sequence.  

Since each tagged triangulation $T$ contains 6 arcs, the sum of the degrees is~12.  
If a collection of compatible arcs has some vertex $v$ with degree less than two, then one or two additional compatible arcs can be added to the collection to create a coinciding pair incident to that vertex.  
 Thus, since $T$ is a {\em maximal} collection of compatible arcs, no vertex $v$ of $T$ has degree less than 2.
 Therefore, the degree sequence of each $T$ is in the following list: $(3,3,3,3)$, $ (2,3,3,4)$, $(2,2,4,4)$, $(2,2,3,5)$, $(2,2,2,6)$.  
We will see that, other than $(2,3,3,4)$, each of these sequences occurs in at least one triangulation. 

We now show that no tagged triangulation has degree sequence $(2,3,3,4)$.  
For the sake of contradiction, assume that such a tagged triangulation $T$ exists and let $v$ be the vertex of degree $4$.
By Lemma \ref{lemma: Farey-2}, $T$ contains no Farey-2 pairs of arcs.
At least one endpoint of any coinciding pair of arcs has degree two, so $T$ contains at most one coinciding pair.  
Since Lemma \ref{lemma: three arcs} states that at most three arcs incident to $v$ can be pairwise Farey-1 compatible, there must be a coinciding pair incident to $v$.
The slope of the coinciding pair forms a Farey-1 triple with the slopes of the other two arcs incident to $v$.
Let $v'$ be the other endpoint of the coinciding pair  in the triangulation.
Since the remaining two punctures in $\M \setminus \{v, v'\}$ both have degree three, there must be two edges between them.
However, this is impossible  since the multiple edges form either a coinciding or Farey-2 pair and each of these possibilities has already been eliminated. 
Thus there exists no tagged triangulation with degree sequence $(2,3,3,4)$.   

In constructing a tagged triangulation with a given degree sequence, we will deal with the combinatorics first, and then the tagging.
That is, we first construct a tagged triangulation with unspecified taggings as far as possible.
At some vertices, the tagging is specified as part of the combinatorics:
For each coinciding pair, the taggings of arcs are fixed at the endpoint where taggings of the two arcs disagree.
At all other vertices, we may specify a tagging arbitrarily, either plain or notched, with all incident arcs tagged accordingly at that puncture.

Suppose $T$ is a tagged triangulation with degree sequence $(3,3,3,3)$.  
We will show that $T$ is of the combinatorial type shown in row \textrm{I} of Table \ref{table:triangulations}. 
Since no vertex has degree $2$, Lemma \ref{lemma: Farey-2} implies that $T$ contains no Farey-2 pairs of arcs.  
Additionally,~$T$ contains no coinciding pairs of arcs because the degree of one endpoint of any coinciding pair is two. 
Thus any pair of tagged arcs in $T$ incident to the same puncture form a Farey-1 pair and any pair of tagged arcs sharing no punctures form a Farey-0 pair. 
Selecting the three slopes for the three arcs incident to $v_{00}$ uniquely determines the remaining three arcs of the tagged triangulation since each remaining arc forms a Farey-0 pair with one of the three specified arcs. 
Thus, once the tagging at each puncture has been chosen, the tagged triangulation is uniquely determined by a choice of slopes for these three arcs.
We conclude that there is exactly one tagged triangulation of the first combinatorial type for each choice of vertex taggings and Farey-1 triple.

Now suppose $T$ is a tagged triangulation with degree sequence $(2,2,4,4)$.
First assume that $T$ contains no coinciding pairs of arcs.  
We will show that in this case,~$T$ is of the type shown in row \textrm{II} of Table~\ref{table:triangulations}.
By Lemmas  \ref{lemma: three arcs} and \ref{lemma: Farey-2}, the vertices of degree four are the endpoints of a Farey-2 pair of arcs and $T$ contains no other Farey-2 pairs.
Let $p=b/a$ and $q$ be the slopes of the Farey-2 pair and let $v_{ij}$ and~$v_{i+a,j+b}$ be the endpoints of the Farey-2 arcs, with $[i,j]<[i+a,j+b] \mod 2$.
The remaining two arcs incident to $v_{ij}$ form a Farey-1 triple with each of the arcs in the Farey-2 pair. 
By Lemma \ref{lemma: Farey-2}, the slopes of these two arcs are uniquely determined by $p$ and~$q$. 
Similarly, the slopes of the two arcs incident to $v_{i+a, j+b}$ and not in the Farey-2 pair are determined by $p$ and $q$.
To complete the tagged triangulation, the tagging at each puncture can be chosen arbitrarily. 

Now suppose $T$ is a tagged triangulation with degree sequence $(2,2,4,4)$ and at least one coinciding pair of arcs.  
We quickly reach a contradiction if we assume that a coinciding pair of arcs connects the two vertices of degree two since at most two arcs can connect the remaining two vertices, making it impossible for these vertices to have degree four. 
Thus $T$ has a coinciding pair incident to a vertex $v$ of degree four and a vertex $v'$ of degree two, and any other coinciding pair in $T$ must also connect a vertex of degree four to a vertex of degree two.
Consider now the possibilities for the two remaining arcs incident to $v$.
If these arcs either form a coinciding pair (with opposite endpoint at the other vertex of degree two) or form a Farey-1 triple 
with the slope of the coinciding pair connecting $v$ and $v'$, then $T$ cannot be completed to obtain the desired degree sequence. 
The only remaining possibility is that the final two arcs incident to $v$ connect $v$ to the other vertex of degree four and therefore form a Farey-2 pair.
Each of the Farey-2 slopes forms a Farey-1 pair with the slope of the coinciding pair.
The two arcs of $T$ not incident to $v$ are also not incident to $v'$.  
As a result, each forms a Farey-0 pair with each arc of the coinciding pair incident to $v$ and $v'$ and therefore form a coinciding pair with each other.
Selecting a Farey-2 pair 
$p=b/a$ and $q$, a vertex $v$ such that~$v=v_{ij}$ with $[i,j]<[i+a,j+b] \mod 2$, a vertex $v'\in \M \setminus \{v_{ij},v_{i+a,j+b}\}$ for the endpoint of the coinciding pair incident to $v$, and a tagging of the vertices of degree  four uniquely determines a triangulation of the type shown in row \textrm{III} of Table~\ref{table:triangulations}.

Suppose $T$ is a tagged triangulation with degree sequence $(2,2,3,5)$.  
We will show that this triangulation is of the type illustrated in row \textrm{IV} of Table~\ref{table:triangulations}.
Let~$v$ be the degree-5 vertex.  
First suppose that no Farey-2 pairs of arcs are incident to $v$. 
By Lemma  \ref{lemma: three arcs}, two coinciding pairs are incident to $v$.
Additionally, since $\deg(v)=5$, an arc whose slope forms a Farey-1 pair with the slopes of the coinciding pairs is incident to $v$.
In this case, we reach a contradiction since the configuration cannot be completed to form a tagged triangulation with degree sequence $(2,2,3,5)$.  
Thus there is a Farey-2 pair of arcs incident to $v$ with slopes $p=b/a$ and $q$.
By Lemma~\ref{lemma: Farey-2}, there is also a coinciding pair incident to $v$ whose slope forms a Farey-1 pair with each of the arcs in the Farey-2 pair.
The remaining arc incident to $v$ forms a Farey-1 pair with each of the other arcs incident to~$v$.
By Lemma \ref{lemma: Farey-2} the slopes of the Farey-2 pair determine the slopes of the remaining arcs incident to $v$.
Selecting a vertex $v' \in \M \setminus \{v_{ij},v_{i+a,b+j}\}$ determines which of these slopes will correspond to a coinciding pair of arcs.  
The final arc of the triangulation forms a Farey-0 pair with each arc in the coinciding pair of arcs, and is thus uniquely determined.  
Except at~$v'$, taggings can be arbitrarily assigned.  

Next, suppose $T$ is a tagged triangulation with degree sequence $(2,2,2,6)$.  
Let~$v$ be the degree-6 vertex. 
By Lemma \ref{lemma: Farey-2} either two coinciding pairs and a Farey-2 pair are incident to $v$ or three coinciding pairs are incident to $v$.  
First assume that two coinciding pairs and a Farey-2 pair are incident to $v$.   
Once a Farey-2 pair~$p,q$ has been selected, by Lemma \ref{lemma: Farey-2} the slopes of the two coinciding pairs are uniquely determined.
Selecting a tagging for $v$ completes the description of the triangulation.
This combinatorial type is illustrated in row \textrm{V} of Table \ref{table:triangulations}.

Finally, suppose $T$ is a tagged triangulation with degree sequence $(2,2,2,6)$ such that  three coinciding pairs of arcs are incident to $v\in \M$.  
Once a Farey-1 triple $p, q,r$ is selected for the slopes of the coinciding pairs and a tagging for $v$ is chosen, the tagged triangulation is uniquely determined.
This combinatorial type is illustrated in row \textrm{VI} of Table \ref{table:triangulations}.
\end{proof}


\section{Allowable curves and the rational quasi-lamination fan}\label{curve sec}
We now give the classification of allowable curves in the four-punctured sphere and describe compatibility of allowable curves, relying largely on Propositions~\ref{prop: tagged arcs} and~\ref{prop: tagged compat} by way of the map $\kappa$ defined in Section~\ref{defs sec}.
The following proposition describes the classification and defines the notation $\curve(a,b)$ and $\curve(a,b,E)$ for allowable curves that we will use for the rest of the paper.

\begin{proposition}
\label{prop: allowable curves}
The allowable curves in the four-punctured sphere are as follows:
\begin{itemize}
\item For each rational slope in standard form $b/a$, the projection of the straight line with slope $b/a$ from $\R^2-\Z^2$ to $\Sp-4$ is an allowable closed curve that we denote by $\curve(a,b)$.
Up to isotopy in $\Sp-4$, these images depend only on the slope $b/a$.
Every closed allowable curve in $\Sp-4$ is of this form.
\item For each triple $(a,b,E)$ as in Proposition~\ref{prop: tagged arcs},  $\curve(a,b,E)=\kappa(\arc(a,b,E))$ is allowable and every non-closed allowable curve is of this form.
\end{itemize}
Compatibility of allowable curves in the four-punctured sphere is characterized as follows: 
\begin{itemize}
\item No two distinct closed curves are compatible.
\item A closed curve $\curve(a,b)$ and a non-closed curve $\curve(c,d,E)$ are compatible if and only if $[a,b]=[c,d].$
\item Two distinct non-closed allowable curves that coincide, except for the spiral directions at one or both ends, are compatible if and only if their spiral direction coincides at exactly one end. 
\item Two non-closed allowable curves that do not coincide, even after possibly changing spiral directions, are compatible if and only if their spiral directions agree at each shared spiral point and $|ad-bc|$ equals the number of shared spiral points ($0$, $1$, or $2$).
\end{itemize}
\end{proposition}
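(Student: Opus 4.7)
The plan is to split the proposition into a non-closed part that reduces to Propositions~\ref{prop: tagged arcs} and~\ref{prop: tagged compat} via the bijection $\kappa$, and a closed-curve part that I would handle directly in the covering space $\reals^2-\integers^2$. For the non-closed classification, since $\kappa$ is a bijection from tagged arcs to non-closed allowable curves (Section~\ref{defs sec}), setting $\curve(a,b,E):=\kappa(\arc(a,b,E))$ and invoking Proposition~\ref{prop: tagged arcs} yields the parametrization immediately. The two all-non-closed compatibility clauses then drop out: the coinciding-pair clause is built into Definition~\ref{def: allow} (a notched-versus-plain tagging corresponds to a reversed spiral direction under $\kappa$), while the non-coinciding clause translates under $\kappa$ to Proposition~\ref{prop: tagged compat}, giving exactly the $|ad-bc|$ criterion.

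For the closed-curve classification, I would first check that each $\curve(a,b)$ is indeed an allowable closed curve. Every element of $G$ is either an even-integer translation or a half-turn about an integer point, and in either case acts on a line of slope $b/a$ as a translation along a parallel line; hence the $G$-translates of a generic line $L$ of slope $b/a$ (avoiding $\integers^2$) are pairwise parallel and pairwise disjoint, giving a non-self-intersecting projection, and the translation $[x,y]\mapsto[x+2a,y+2b]$ makes the projection closed. Non-contractibility and failure to cut out a once-punctured disk follow by examining which punctures lie on which side of $L$, using $\gcd(a,b)=1$. For the converse, I would lift an arbitrary closed allowable curve through the tower $\reals^2-\integers^2\to\T-4\to\Sp-4$: its lift to $\T-4$ is a disjoint union of essential simple closed curves on the torus, each isotopic to a straight-line geodesic by the classical classification of simple closed curves on a torus (primitive classes in $H_1$), and pushing the isotopy down to $\Sp-4$ recovers a representative of the required form $\curve(a,b)$.

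For compatibility involving closed curves, if $\curve(a,b)$ and $\curve(c,d)$ have distinct slopes their lifts are non-parallel lines meeting transversely at a non-integer point, producing an intersection in $\Sp-4$ that is essential because on the four-punctured sphere any two disjoint essential simple closed curves induce the same partition of $\M$ into two pairs and are therefore isotopic. If $\curve(a,b)$ and $\curve(c,d,E)$ satisfy $(a,b)=(c,d)$, their line-lifts are parallel and can be chosen disjoint, so the pair is compatible; if the slopes differ, the same transverse-intersection argument forces an essential intersection. The main obstacle I anticipate is making rigorous the claim that a transverse intersection of differently-sloped lifts yields a non-removable intersection downstairs, which I would resolve by the bigon criterion together with the observation, provided by the closed-curve classification established in the first half of the proof, that different slopes correspond to different isotopy classes of simple closed curves in $\Sp-4$.
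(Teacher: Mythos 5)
Your treatment of the non-closed part (parametrization and both compatibility clauses via $\kappa$, Propositions~\ref{prop: tagged arcs} and~\ref{prop: tagged compat}) matches the paper and is fine. The closed-curve part, however, takes a different route from the paper and has real gaps. First, to show every closed allowable curve is a $\curve(a,b)$, you lift to $\T-4$ and invoke ``the classical classification of simple closed curves on a torus (primitive classes in $H_1$).'' That classification holds on the \emph{closed} torus and the once-punctured torus, but $\T-4$ has four punctures, and its simple closed curves are not classified by primitive homology classes: there are essential curves bounding once-, twice-, and thrice-punctured disks with the same homology class as a contractible curve. You would first need to rule these out using the allowability conditions, and even then the classification statement you want needs to be proved, not cited. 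Second, and more fundamentally, once you have an isotopy in $\T-4$ to a straight-line representative, ``pushing the isotopy down to $\Sp-4$'' is not automatic: the lift to $\T-4$ is a $\sigma$-invariant union of one or two curves, and an arbitrary isotopy in $\T-4$ need not be $\sigma$-equivariant. This is exactly the subtlety the paper had to address (at length) for arcs via $\sigma$-symmetric isotopies (Definition~\ref{sigma-symmetric}, Propositions~\ref{sigma isotopic} and~\ref{isotopic = sigma isotopic}); you cannot elide it for closed curves.

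The paper sidesteps both problems with a more elementary ``strips'' argument entirely in $\reals^2 - \integers^2$: a closed allowable curve $\lambda$ cuts $\Sp-4$ into two twice-punctured disks, each containing exactly one taggable arc disjoint from $\lambda$; the lifts of those two arcs are a family of parallel lines cutting the plane into strips, each lift of $\lambda$ lies in a strip, and $G$ acts transitively on strips. This gives both the classification and the independence of isotopy class on the choice of lifting line in one stroke, and it also drives the compatibility statements involving closed curves. I'd also note that your ``examining which punctures lie on which side of $L$'' step in the forward direction is not precise as stated (all four punctures lift to $\integers^2$, which has infinitely many points on each side of $L$); the paper's argument that the projection separates the sphere into two twice-punctured disks is the precise version of what you are gesturing at, and you would need to supply it.
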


\begin{proof}
Any line $\ell$ of slope $b/a$ in $\reals^2 - \integers^2$ projects to a curve $\lambda$ in $\Sp-4$.
This curve is closed because two points on $\ell$ differing by $[2a,2b]$ project to the same point in $\Sp-4$.
To show that $\lambda$ is allowable, we show that it cuts $\Sp-4$ into two twice-punctured discs.
Suppose to the contrary that it cuts $\Sp-4$ into two discs, one of which has three or more punctures.
Then there is a $3$-cycle formed by three distinct taggable arcs, none of which intersect $\lambda$. 
But the three taggable arcs are pairwise Farey-1 compatible, and in particular don't all have the same slope.
The union of all lifts of the three arcs to $\reals^2$ is a collection of lines, not all parallel, so in particular $\ell$ intersects a lift of one of the three arcs, and that is a contradiction.

Conversely, any allowable closed curve $\lambda$ in $\Sp-4$ cuts $\Sp-4$ into two twice-punctured discs.
There are exactly two taggable arcs $\alpha_1$ and $\alpha_2$ that don't intersect~$\lambda$, one in each twice-punctured disc.
The union of all lifts of $\alpha_1$ and $\alpha_2$ to $\reals^2$ is a collection of lines, and since $\alpha_1$ and $\alpha_2$ don't intersect, these lines are all parallel. 
Write $b/a$ for the standard form of the slope of these lines.
Furthermore, the lines cover all integer points in the plane and cut the plane into infinite strips.
Since $\alpha_1$ and $\alpha_2$ don't intersect $\lambda$, each lift of $\lambda$ is contained in one of the strips.
Thus each lift of $\lambda$ is isotopic in $\reals^2-\integers^2$ to a straight line with slope $b/a$. 

Given two lines $\ell$ and $\ell'$ in $\reals^2 - \integers^2$ of slope $b/a$, let $\lambda$ and $\lambda'$ be the allowable closed curves obtained from $\ell$ and $\ell'$ by projection.
The taggable arcs $\alpha_1$ and $\alpha_2$ determined by $\lambda$ in the previous paragraph depended only on the slope of $\ell$.
In particular, $\lambda$ and $\lambda'$ determine the same decomposition of $\reals^2$ into strips.
The group~$G$ (see Section~\ref{covering sec}) acts transitively on the strips, and therefore each strip contains a lift of $\lambda$ and a lift of $\lambda'$.
These two lifts are isotopic in $\reals^2-\integers^2$, and thus $\lambda$ and $\lambda'$ are isotopic in $\Sp-4$.
We have proved the first bullet point of the proposition.

The second bullet point is an immediate consequence of Proposition~\ref{prop: tagged arcs} and the fact that $\kappa$ is a bijection between tagged arcs and non-closed allowable curves.

The assertion that distinct closed allowable curves are incompatible and the assertion on compatibility of closed and non-closed allowable curves follow easily from the lines-and-strips ideas in the paragraphs above.
The assertions about compatibility of non-closed allowable curves is immediate from Proposition~\ref{prop: tagged compat} by way of the bijection $\kappa$.
\end{proof}

\begin{proposition}
\label{prop: maximal curves}
Applying $\kappa$ to any triangulation described in Table~\ref{table:triangulations} results in a maximal collection of pairwise compatible allowable curves on the four-punctured sphere.  
The remaining maximal collections of pairwise compatible allowable curves are obtained by applying $\kappa$ to the tagged arcs in the configuration shown in Table \ref{table:curves}.
Table \ref{table:curves} has the same format as Table \ref{table:triangulations}.  
\end{proposition}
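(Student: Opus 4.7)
The plan is to split maximal pairwise compatible collections of allowable curves into two cases based on whether the collection contains a closed curve.

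First, I would handle the ``all non-closed'' case. By Proposition~\ref{prop: allowable curves} and the bijection $\kappa$ between tagged arcs and non-closed allowable curves (which preserves compatibility, as noted in Section~\ref{defs sec}), a maximal compatible collection of non-closed allowable curves is exactly the image under $\kappa$ of a maximal compatible collection of tagged arcs, that is, a tagged triangulation. By Proposition~\ref{all tri}, these tagged triangulations are enumerated by Table~\ref{table:triangulations}. To conclude that $\kappa(T)$ is maximal among all compatible collections (not just the non-closed ones), I would show that no tagged triangulation admits the addition of a compatible closed curve. By Proposition~\ref{prop: allowable curves}, a closed curve $\curve(a,b)$ is compatible with a non-closed curve $\curve(c,d,E)$ only when $[a,b]=[c,d]$, so this reduces to showing that no tagged triangulation has all six arcs of the same slope $b/a$.

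The slope-counting step is where I would do the key combinatorics. By Proposition~\ref{prop: tagged arcs}, a fixed standard-form slope $b/a$ admits exactly two underlying taggable arcs (corresponding to the two choices of endpoint-set modulo $2$). By Proposition~\ref{prop: tagged compat}, two compatible tagged arcs on the same taggable arc form a coinciding pair, contributing at most two tagged arcs per underlying taggable arc. Thus at most $2\times 2 = 4$ pairwise compatible tagged arcs can share a single slope. Since $4<6$, no tagged triangulation of the four-punctured sphere has all arcs of a common slope, and $\kappa(T)$ cannot be enlarged by adding a closed curve.

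For the second case, suppose a maximal compatible collection contains a closed curve $\curve(a,b)$. No other closed curve is compatible with it (first compatibility bullet of Proposition~\ref{prop: allowable curves}), and every non-closed curve in the collection must have slope $b/a$ (second bullet). So the collection consists of $\curve(a,b)$ together with a maximal compatible collection of tagged arcs, all of slope $b/a$, pulled through $\kappa$. The slope-$b/a$ taggable arcs are the two arcs described above; these two arcs share no endpoints and so are Farey-$0$ compatible regardless of taggings, and on each we may include the full coinciding pair of tagged arcs. This produces $4$ non-closed allowable curves, which together with $\curve(a,b)$ give the $5$-curve configuration that I expect Table~\ref{table:curves} to display. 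This collection is maximal: it cannot be enlarged by another closed curve (no two closed curves are compatible), nor by a non-closed curve of different slope (incompatible with $\curve(a,b)$), nor by another tagged arc of slope $b/a$ (the four slope-$b/a$ tagged arcs already exhaust the compatibility).

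The main obstacle is the slope-counting bound in the second paragraph; everything else is bookkeeping from the bijection $\kappa$ and from Proposition~\ref{prop: allowable curves}. Once that bound is established, both cases assemble into the statement of the proposition.
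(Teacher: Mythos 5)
Your proposal is correct and takes essentially the same approach as the paper: split into the no-closed-curve case, where maximality follows from $\kappa$ being a compatibility-preserving bijection together with the observation that at most four pairwise-compatible tagged arcs share a slope (so a closed curve cannot be adjoined), and the one-closed-curve case, where the compatible non-closed curves are exactly two coinciding pairs of slope $b/a$, one on each of the two taggable arcs. One small imprecision: there is no single ``full coinciding pair'' on a taggable arc --- a coinciding pair is determined by a choice of endpoint (where the taggings agree) and a tagging there, giving four possibilities per taggable arc --- and this choice is precisely the data that row VII of Table~\ref{table:curves} records via $v$, $v'$, and the taggings at $v,v'$.
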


\begin{table}[h!]\setlength{\tabcolsep}{.5cm}
	\begin{center}
	\begin{tabular}{ c   p{6cm}  }
	\toprule
     \raisebox{-\totalheight}{\scalebox{0.9}{\includegraphics{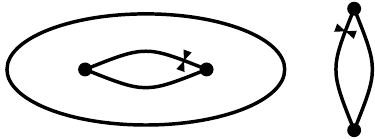}\begin{picture}(0,0)(42,-20)\put(-75,20){\textrm{VII}.}\put(-51, -1){$v$}\put(40, -20){$v'$}\end{picture}}}
      & \begin{itemize}[topsep=0pt, leftmargin=10pt, labelindent=0pt, itemindent=0pt]
      \item slope $p=b/a \in \mathbb{Q}\cup \infty$ 
      \item $v~\in~\{v_{00}, v_{ab}\}$ and $v'\in \M \setminus \{v_{00}, v_{ab}\}$
      \item taggings at $v, v'$
      \end{itemize}
	\\ \bottomrule
	\end{tabular}
	\caption{The unique combinatorial type of a maximal collection of pairwise compatible allowable curves containing a closed curve.  Non-closed curves are represented by tagged arcs.}
\label{table:curves}
	\end{center}
	\end{table}
	
\begin{proof}
Applying $\kappa$ to any triangulation in Table~\ref{table:triangulations}  results in a collection of pairwise compatible non-closed allowable curves containing no closed curves.
Since every closed curve separates the four-punctured sphere into two twice-punctured discs, no closed curve is compatible with this collection.
Thus applying $\kappa$ to each triangulation of Table~\ref{table:triangulations} results in a distinct maximal collection of pairwise compatible allowable curves.
Since $\kappa$ is a bijection, every maximal collection of pairwise allowable curves containing no closed curves arises in this way.

By Proposition \ref{prop: allowable curves}, any other maximal collection of pairwise compatible allowable curves contains a single closed curve.
Let $\lambda$ be a closed curve with slope $b/a$.
Proposition~\ref{prop: allowable curves} states that any allowable curves compatible with $\lambda$ have slope $b/a$ and spiral into the punctures of a twice-punctured disc.  
At most two such pairwise compatible allowable curves (obtained by applying $\kappa$ to a coinciding pair of tagged arcs) can be contained in each twice-punctured disc.
A coinciding pair of tagged arcs is determined by a choice of slope, an endpoint on which the taggings agree and a tagging at that endpoint.
Thus choosing a vertex of each twice-punctured disc and a tagging at each of these vertices completes the description of a maximal collection of pairwise compatible curves containing a closed curve.
\end{proof}

Maximal collections of pairwise compatible allowable curves containing no closed curves correspond to full-dimensional ($6$-dimensional) cones in the rational quasi-lamination fan~$\F_\mathbb{Q}(T_0)$.
By Theorem~\ref{Null Tangle consequences}, these are also the full-dimensional cones of the mutation fan $\F_B$.
Associating each cone with its corresponding tagged triangulation, two full-dimensional cones are adjacent in the mutation fan if and only if their tagged triangulations are related by flipping a single tagged arc.  
Flipping an arc means removing that arc and replacing it by the unique other arc that completes a tagged triangulation.
Details about flips of tagged arcs are found in \cite[Section 7]{fst}.  
These definitions and Table \ref{table:triangulations} imply that the adjacencies of $6$-dimensional cones in $\F_B$ are as follows: 
\begin{itemize}
\item Every type-I cone is adjacent to six type-II cones. 
\item Every type-II cone is adjacent to two type-I and four type-IV cones.
\item Every type-III cone is adjacent to two type-III and four type-IV cones.
\item Every type-IV cone is adjacent to two type-II, one type-III, two type-IV and one type-V cone.
\item Every type-V cone is adjacent to four type-IV and two type-VI cones.
\item Every type-VI cone is adjacent to six type-V cones.
\end{itemize}

Each maximal pairwise compatible set of allowable curves containing a closed curve also defines a maximal cone in the rational quasi-lamination fan.
These are the cones of type~VII in the sense of Tables~\ref{table:triangulations} and~\ref{table:curves}.
Theorem~\ref{Null Tangle consequences} says that these are cones in $\F_B$ as well. 
Since they are maximal in $\F_\rationals(T_0)$ and since the full-dimensional cones in $\F_\rationals(T_0)$ coincide with the full-dimensional cones in $\F_B$, we see that these cones are $5$-dimensional maximal cones of $\F_B$. 
Each of these type-VII cones is adjacent to four type-VII cones obtained by changing the spiral direction at both spiral points of a single non-closed curve.
The shear coordinate vector for each closed curve spans a ray that is contained in exactly sixteen type-VII cones.  
The link of the ray is combinatorially a $4$-dimensional crosspolytope.  

\section{Shear coordinates}\label{shear sec}  
In this section, we describe the computation of shear coordinates in the four-punctured sphere and explicitly derive the shear coordinate vectors listed in Theorem~\ref{unisphere thm}. 
Recall, however, that Theorem~\ref{unisphere thm} was stated for simplicity, not for efficiently listing the universal coefficients.
Here, we will list the shear coordinates efficiently.

To do so, we define some subgroups of $\br{(14),\, (25),\, (36),\,  (123)(456)}$. 
Let $Z=\br{(123)(456)}$, let $X=\br{(14)(25)(36)}$, and let $Y=\br{(14)(36), (25)(36)}$.
We take the convention of composing permutations from right to left, so that the set $Z \cdot Y$ consists of all permutations obtained by first applying a permutation in $Y$ and then applying a permutation in $Z$. 
Note however that this convention is irrelevant, as the generator of $X$ commutes with the generator of $Z$ and it can be checked that $Z \cdot Y = Y \cdot Z$ as sets.

\begin{theorem}\label{thm: shear}   
Let $T_0$ be the triangulation shown in Figure~\ref{tri and mat}.
The shear coordinates with respect to $T_0$ of allowable curves in the four-punctured sphere are listed irredundantly by taking each entry in the table below, letting $b/a$ vary over all standard forms of rational slopes in the range shown, constructing the vector shown, and applying each coordinate permutation in the set indicated.\\

\begin{center}
\begin{tabular}{llll}
$1.$\!&
$\begin{bmatrix} -\floor*{\frac{b-1}{2}}, \floor*{\frac{a}{2}}+1, \floor*{\frac{b-a}{2}}, -\floor*{\frac{b}{2}}, \floor*{\frac{a+1}{2}}, \floor*{\frac{b-a-1}{2}} \end{bmatrix}$
&&
$0<\frac{b}{a} \leq \infty$ \\
&&&$Z \cdot X$
\\[8pt]
$2.$\!&
$\begin{bmatrix}  -\floor*{\frac{b}{2}}-1, \floor*{\frac{a-1}{2}}, \floor*{\frac{b-a+1}{2}}, -\floor*{\frac{b+1}{2}}, \floor*{\frac{a}{2}}, \floor*{\frac{b-a}{2}}+1   \end{bmatrix}$
& &
$0\le\frac{b}{a} < \infty$ \\[2pt]
& &&
$Z \cdot X$ \\[8pt]
$3.$\!&
$\begin{bmatrix}  -\floor*{\frac{b}{2}}, \floor*{\frac{a+1}{2}}, \floor*{\frac{b-a+1}{2}}, -\floor*{\frac{b+1}{2}}, \floor*{\frac{a}{2}}, \floor*{\frac{b-a}{2}}   \end{bmatrix}$
&&
$0<\frac{b}{a} \leq \infty$\\[2pt]
&&&
$Z \cdot Y$\\[8pt]
$4.$\!&
$\begin{bmatrix}  -b, a, b-a, -b, a, b-a  \end{bmatrix}$
&&
$0<\frac{b}{a} \leq \infty$ \\[2pt]
&&&$Z$\end{tabular}
\end{center}
\end{theorem}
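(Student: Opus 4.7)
My approach is to compute shear coordinates directly in the universal cover $\reals^2-\integers^2$ of $\Sp-4$ described in Section~\ref{covering sec}. I would first lift $T_0$ to a $G$-periodic triangulation $\widetilde{T_0}$ of $\reals^2-\integers^2$ whose six $G$-orbits of edges correspond to $\gamma_1,\dots,\gamma_6$. By Propositions~\ref{prop: tagged arcs} and~\ref{prop: allowable curves}, each allowable curve lifts either to a straight line segment (non-closed case) with endpoints in $\integers^2$, or to an entire straight line of rational slope (closed case). Each shear coordinate $b_{\gamma_i}(T_0,\lambda)$ can then be computed by counting signed intersections, as prescribed by Figure~\ref{shear fig}, of the lift with a fundamental strip of representatives of the $G$-orbit of a single lifted $\gamma_i$-edge.

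Next, I would verify that the group $\br{(14),\,(25),\,(36),\,(123)(456)}$ acts on the labeled triangulation $T_0$ by self-homeomorphisms of $(\Sp,\M)$. The generator $(123)(456)$ arises from the threefold rotational symmetry of $T_0$ visible in Figure~\ref{tri and mat}; the generators $(14)$, $(25)$, $(36)$ arise from the involution exchanging the two triangular faces. Each such symmetry takes allowable curves to allowable curves and permutes shear coordinate vectors by the corresponding coordinate permutation, so it suffices to compute one representative in each orbit.

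I would then compute representatives for each of Items 1--4. The closed-curve case (Item 4) is the cleanest: a line of slope $b/a$ in $\reals^2$ meets each $G$-orbit of lifted arcs a predictable number of times per period, each crossing contributing a uniform $\pm 1$ sign, and the twofold symmetry relating $\gamma_1,\gamma_2,\gamma_3$ to $\gamma_4,\gamma_5,\gamma_6$ (visible in the matrix of Figure~\ref{tri and mat}) forces the pairing $(-b,a,b-a,-b,a,b-a)$. The three non-closed items arise from choosing a lift from an appropriate integer point to $[a,b]$, corresponding to the three combinatorial possibilities for the endpoint set $E$ and tagging pattern. Each interior crossing of the segment with $\widetilde{T_0}$ contributes $\pm 1$ by direct inspection of the two adjacent triangles, while the endpoint behaviour -- governed by the spiral direction, via the map $\kappa$ of Section~\ref{defs sec} -- modifies the count at arcs in the star of each endpoint puncture.

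I expect the main obstacle to be the parity bookkeeping in the non-closed cases. The bulk count of signed interior intersections is nearly linear in $(a,b)$, but the precise counts depend on the parities of $a$, $b$, $b-a$, etc., and on which of the six arcs are incident to each endpoint puncture; together these produce the floor-function shape of the formulas. I would resolve this by verifying the formulas on a small collection of base cases (covering both parities of $a$ and $b$ for each of Items 1--3), and then showing by induction that the transformations $(a,b) \mapsto (a+2,b)$ and $(a,b) \mapsto (a,b+2)$ -- each of which preserves parities and the standard-form condition -- alter each shear coordinate by exactly the amount predicted by the floor formulas.
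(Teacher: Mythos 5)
Your overall strategy---lift to $\reals^2-\integers^2$, compute signed crossings with the lifted triangulation, and reduce the problem to one fundamental domain of slopes/endpoints via symmetry---is the same strategy the paper uses, and the induction on $(a,b)\mapsto(a\pm 2,b)$ is a plausible substitute for the paper's word-counting. However, there is a genuine error at the heart of your symmetry reduction.

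The group $\br{(14),\,(25),\,(36),\,(123)(456)}$ does \emph{not} act on $T_0$ by self-homeomorphisms of $(\Sp,\M)$, and the individual transpositions $(14)$, $(25)$, $(36)$ in particular do not. From the signed adjacency matrix in Figure~\ref{tri and mat}, the four triangular faces of $T_0$ are $\{\gamma_1,\gamma_2,\gamma_3\}$, $\{\gamma_1,\gamma_5,\gamma_6\}$, $\{\gamma_2,\gamma_4,\gamma_6\}$, $\{\gamma_3,\gamma_4,\gamma_5\}$. The permutation $(14)$ sends $\{\gamma_1,\gamma_2,\gamma_3\}$ to $\{\gamma_2,\gamma_3,\gamma_4\}$, which is not a face, so no homeomorphism fixing $\M$ can induce $(14)$; likewise for $(25)$, $(36)$, and also $(14)(25)(36)$ (which sends $\{\gamma_1,\gamma_2,\gamma_3\}$ to $\{\gamma_4,\gamma_5,\gamma_6\}$, not a face). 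There is no ``involution exchanging the two triangular faces'' because $T_0$ has four triangles, not two. The subgroup of labeled-triangulation symmetries is only $\br{(14)(36),\,(25)(36),\,(123)(456)}=Y\cdot Z$: the permutations $(14)(36)$, $(25)(36)$, $(14)(25)$ come from integer translations of the plane that preserve the lifted triangulation, and $(123)(456)$ from an integer linear map. The group $X=\br{(14)(25)(36)}$ in the theorem is not a geometric symmetry at all; it appears because the vectors in Items 1, 2, 4 happen to be fixed by two of the three transpositions $(14),(25),(36)$ (a parity fact about the floor formulas), so acting by $X$ traces the same orbit as acting by $Y$ but without repetition.

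This matters because the theorem is not merely ``the orbit of some vectors under a symmetry group'' --- it is an \emph{irredundant} list, and the different cosets $Z\cdot X$, $Z\cdot Y$, $Z$ and the different slope ranges are precisely what makes it irredundant. Your sketch computes one representative per orbit of a symmetry group that does not act, and gives no mechanism for establishing irredundancy or for explaining why Item 1 uses $Z\cdot X$ while Item 3 uses $Z\cdot Y$. Separately, your induction step $(a,b)\mapsto(a+2,b)$ does not in general preserve the standard-form condition $\gcd(a,b)=1$ (e.g.\ $(1,6)\mapsto(3,6)$), so the recursion as stated does not traverse the set of allowable curves; you would have to prove the formulas for arbitrary integer $(a,b)$ and then restrict, or otherwise reorganize the induction.
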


Before we prove Theorem~\ref{thm: shear}, we show that it has the correct relationship to Theorem~\ref{unisphere thm}.
\begin{proposition}\label{same list}
The set of vectors listed in Theorem~\ref{thm: shear} is the same as the set of vectors listed in Theorem~\ref{unisphere thm}.
\end{proposition}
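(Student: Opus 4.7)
The plan is to verify set equality directly by comparing the two listings item by item. First, I would observe that items 1, 3, and 4 of the two theorems use identical vector formulas and identical ranges for the standard form $b/a$, differing only in the permutation group applied: the subgroups $H_1 = Z \cdot X$, $H_3 = Z \cdot Y$, and $H_4 = Z$ of the full group $G = \langle (14), (25), (36), (123)(456) \rangle$ of order $24$. Since each $H_i$ is contained in $G$, the inclusion of each $H_i$-orbit inside the corresponding $G$-orbit is immediate, yielding $S_i^{\mathrm{shear}} \subseteq S_i^{\mathrm{uni}}$ for $i \in \{1, 3, 4\}$.

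For item 2, the next step is to note that the vector in Theorem~\ref{thm: shear} is obtained from the one in Theorem~\ref{unisphere thm} by the substitution $(a, b) \mapsto (b, a)$, and that this substitution defines a bijection on standard forms of rational slopes carrying the range $0 \le b/a < \infty$ onto $0 < b/a \le \infty$. Thus the underlying unpermuted sets coincide under this reparametrization, and since $H_2 = Z \cdot X \subseteq G$, we obtain $S_2^{\mathrm{shear}} \subseteq S_2^{\mathrm{uni}}$.

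For the reverse inclusions, it suffices to show, for each $i$, that the $G$-orbit of $v_i(a, b)$ equals its $H_i$-orbit, which by orbit-stabilizer is equivalent to $H_i \cap \mathrm{Stab}_G(v_i(a, b)) = \{e\}$ together with $|H_i| \cdot |\mathrm{Stab}_G(v_i(a, b))| = 24$. I would prove this by case analysis on the parities of $a$ and $b$, using that standard form excludes the both-even case. For item 4, the stabilizer is clearly $\langle (14), (25), (36) \rangle$ of order $8$, which meets $H_4 = Z$ trivially by coprime-order considerations. For items 1 and 2, the stabilizer turns out to be one of the three order-$4$ subgroups $\langle (14), (25) \rangle$, $\langle (14), (36) \rangle$, $\langle (25), (36) \rangle$, depending on the parity class of $(a, b)$; each meets the cyclic group $Z \cdot X$ only at the identity, because the unique involution in $Z \cdot X$ is the product $(14)(25)(36)$, which lies in none of these three subgroups. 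For item 3, the stabilizer is one of $\langle (14) \rangle$, $\langle (25) \rangle$, $\langle (36) \rangle$; each intersects $Z \cdot Y$ trivially because the only involutions in $Z \cdot Y$ are the three nontrivial elements of $Y$ (products of pairs of transpositions), and no single transposition is such a product.

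The main obstacle will be the bookkeeping with floor functions needed to identify which transpositions stabilize which coordinates, via identities such as $\lfloor (a-1)/2 \rfloor = \lfloor a/2 \rfloor$ precisely when $a$ is odd, and the analogous identities for the other five coordinates. Once organized by the three parity classes of $(a, b)$, the group-theoretic intersection computations follow immediately from the structure of $G$ as $(\Z/2)^3 \rtimes \Z/3$, and the proposition drops out.
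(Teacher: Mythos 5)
Your proposal is correct and follows essentially the same route as the paper: both pivot on computing, via the parity of $a$ and $b$ together with the identity $\lfloor -k/2 \rfloor = -\lfloor (k+1)/2 \rfloor$, which of the transpositions $(14),(25),(36)$ fix the vector in each item, and both then exploit the factorization of the full group as $Z\cdot\langle(14),(25),(36)\rangle$. The only cosmetic difference is that you phrase the lifting step via orbit-stabilizer and an explicit determination of $\mathrm{Stab}_G$, whereas the paper simply observes that the $X$-orbit (resp.\ $Y$-orbit) already equals the $\langle(14),(25),(36)\rangle$-orbit and then left-multiplies by $Z$, avoiding the need to rule out stabilizing elements with nontrivial $Z$-part.
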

\begin{proof}
We first notice that, setting aside for a moment permutations of the coordinates, the vectors listed in Item 2 of each theorem are the same: swapping $a$ and $b$ changes Item 2 of Theorem~\ref{unisphere thm} to Item 2 of Theorem~\ref{thm: shear}.

It is easily checked, keeping in mind the identity $\floor*{-\frac{k}{2}} = -\floor*{\frac{k+1}{2}}$ 
for $k \in \integers$, that each vector in Items 1 and 2 of the two theorems is fixed by exactly two of the permutations $(14)$, $(25)$, or $(36)$.
Thus, acting on each such vector by the group $X$ produces the same vectors as acting by $\br{(14),(25),(36)}$.
Since $Z \cdot X = X \cdot Z$, acting by $Z\cdot X$ produces the same vectors as acting by $\br{(14),(25),(36),(123)(456)}$.

Similarly, one can check that each vector in Item 3 of the two theorems is fixed by exactly one of the permutations $(14)$, $(25)$, or $(36)$.
Thus in this case, acting by $Y$ produces the same vectors as acting by $\br{(14),(25),(36)}$, and since $Z \cdot Y = Y \cdot Z$, 
acting by $Z\cdot Y$ produces the same vectors as acting by $\br{(14),(25),(36),(123)(456)}$.

Because the vectors in Item 4 are fixed by $\br{(14),(25),(36)}$, acting on these by $Z$ produces the same vectors as acting by $\br{(14),(25),(36),(123)(456)}$.
\end{proof}

The strategy for the proof of Theorem~\ref{thm: shear} is to calculate shear coordinates by lifting to the $\integers^2$-punctured plane and then reusing ideas from the analogous calculations for the once-punctured torus \cite[Proposition~5.1]{unitorus}. 
The triangulation $T_0=\set{\gamma_i}_{i=1}^6$ of $\Sp-4$ shown in Figure~\ref{tri and mat} lifts to a triangulation $\overline{T}_0$ of $\R^2-\Z^2$, shown in Figure~\ref{lift tri}. 
\begin{figure}\centering
\scalebox{0.7777}{\includegraphics{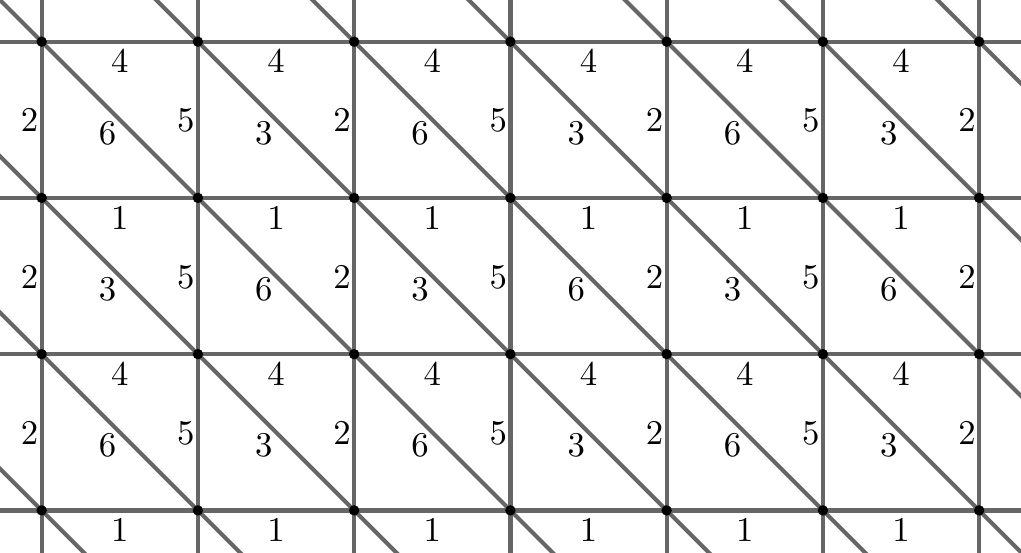}
\begin{picture}(0,0)(88,-7)
\put (-224,-8){\large$[0,0]$}
\end{picture}
}
\caption{The triangulation $\overline{T}_0$ obtained by lifting $T_0$ to the plane.}
\label{lift tri}
\end{figure}
By Proposition~\ref{prop: allowable curves}, a lift of an allowable curve $\lambda$ is either a line of rational slope in $\R^2-\Z^2$ or a line segment of rational slope with integer spiral points.
By Definition~\ref{shear def}, computing the shear coordinate $b_{\gamma_i}(T_0, \lambda)$ for a given arc $\gamma_i \in T_0$ amounts to counting and categorizing the intersections of a lift $\bar{\lambda}$ with lifts of $\gamma_i$ and summing the contributions of $-1, 0$, or $1$ for each crossing.

\begin{example}
\label{ex: shear1}
Consider the allowable curve $\lambda = \curve(2,3,\{v\notch_{00},v\notch_{01}\})$, one of whose lifts $\bar{\lambda}$ is portrayed in the left picture of Figure~\ref{shear ex fig 1}. 
Each arc in the figure is a lift $\bar{\gamma_i}$ of an arc $\gamma_i \in T_0$. Lifts $\bar{\gamma_i}$ which intersect $\bar{\lambda}$ in one of the two nontrivial ways depicted in Figure~\ref{shear fig} are labeled in black with the numbering from Figure~\ref{tri and mat}.
The corresponding nonzero contributions to shear coordinates are shown in red, adding up to $\b(T_0, \lambda)=[-1,2,0,-1,1,0]$. 
\end{example}

\begin{figure}\centering
\begin{tabular}{cccc}
\scalebox{1}{
\includegraphics{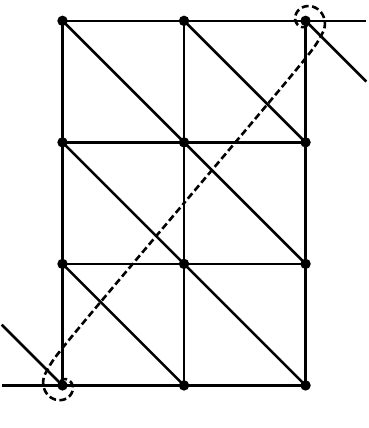}
\begin{picture}(0,0)(88,-7)
\put(0,-4){\small$00$}
\put(55,114){\small$01$}
\put(-8,20){\scriptsize$2$}
\put(-2,14){\textcolor{BrickRed}{\scriptsize$+1$}}
\put(13,42){\scriptsize$4$}
\put(15,33){\textcolor{BrickRed}{\scriptsize$-1$}}
\put(27,59){\scriptsize$5$}
\put(33,55){\textcolor{BrickRed}{\scriptsize$+1$}}
\put(47,69){\scriptsize$1$}
\put(37,78){\textcolor{BrickRed}{\scriptsize$-1$}}
\put(56,100){\textcolor{BrickRed}{\scriptsize$+1$}}
\put(68,94){\scriptsize$2$}
\end{picture}}
&&&
{\includegraphics{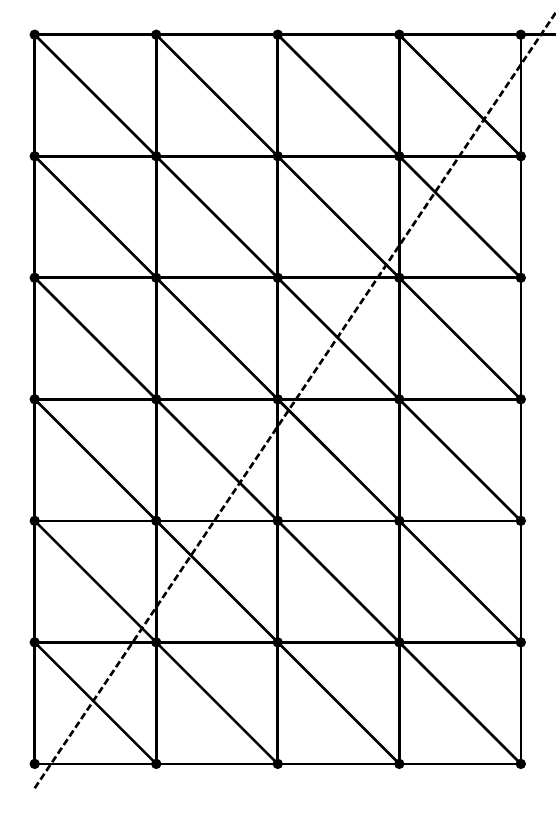} 
\begin{picture}(0,238)(88,-7)
\put(-77,-1){\small$00$}
\put(-12,115){\small$01$}
\put(60,222){\small$00$}
\put(-56,23){\scriptsize$6$}
\put(-43,25){\textcolor{BrickRed}{\scriptsize$+1$}}
\put(-42,45){\scriptsize$4$}
\put(-41,37){\textcolor{BrickRed}{\scriptsize$-1$}}
\put(-36,52){\scriptsize$5$}
\put(-27,52){\textcolor{BrickRed}{\scriptsize$+1$}}
\put(-12,72){\scriptsize$1$}
\put(-23,80){\textcolor{BrickRed}{\scriptsize$-1$}}
\put(6,98){\scriptsize$2$}
\put(-10,100){\textcolor{BrickRed}{\scriptsize$+1$}}
\put(13,108){\scriptsize$4$}
\put(2,118){\textcolor{BrickRed}{\scriptsize$-1$}}
\put(26,129){\scriptsize$3$}
\put(9,128){\textcolor{BrickRed}{\scriptsize$+1$}}
\put(35,143){\scriptsize$1$}
\put(23,150){\textcolor{BrickRed}{\scriptsize$-1$}}
\put(44,154){\scriptsize$5$}
\put(30,159){\textcolor{BrickRed}{\scriptsize$+1$}}
\put(58,177){\scriptsize$4$}
\put(48,186){\textcolor{BrickRed}{\scriptsize$-1$}}
\put(78,202){\scriptsize$2$}
\put(60,207){\textcolor{BrickRed}{\scriptsize$+1$}}
\put(82,213){\scriptsize$1$}
\put(70,223){\textcolor{BrickRed}{\scriptsize$-1$}}
\end{picture}}\\
{\small$\w(\lambda) = \t_1\r_2\t_4\r_5\t_1\r_2$}
&&&
{\small$\w(\lambda_C)= \t_1\t_4\r_5\t_1\r_2\t_4\t_1\r_5\t_4\r_2\t_1$}\\
{\small$\b(T_0, \lambda)=[-1,2,0,-1,1,0]$}
&&&
{\small$\b(T_0, \lambda_C)=[-3,2,1,-3,2,1]$}
\end{tabular}
\caption{Some shear coordinate calculations.}
\label{shear ex fig 1}
\end{figure}

We have chosen the triangulation $T_0$ so that it lifts to the same triangulation of $\R^2-\Z^2$ that was considered in \cite{unitorus}. 
The latter was a lift of a triangulation of $\T-1$ that we will call $T'_0$, pictured in Figure~\ref{tri and mat torus}.
\begin{figure}\centering
\scalebox{1}{
\raisebox{-5pt}{\includegraphics{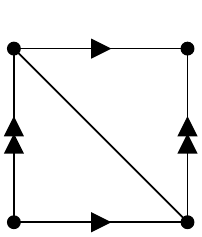}
\begin{picture}(0,0)(56,0)
\put(20,-6){\small$\bar\gamma_1$}
\put(-14,27){\small$\bar\gamma_2$}
\put(17,23){\small$\bar\gamma_3$}
\put(20,60){\small$\bar\gamma_1$}
\put(52,27){\small$\bar\gamma_2$}
\end{picture}}}
\qquad\quad
\raisebox{21pt}{$
\begin{bmatrix*}[r]
0 & 2 & -2 \\
-2 & 0 & 2 \\
2 & -2 & 0
\end{bmatrix*}
$}
\caption{A triangulation $T'_0$ of the once-punctured torus and its signed adjacency matrix}
\label{tri and mat torus}
\end{figure}
(It is important to keep in mind that in~\cite{unitorus}, $\T-1$ is realized as the quotient of $\reals^2 - \integers^2$ modulo the group of all integer translations.
By contrast, in Section~\ref{covering sec}, we realized $\T-1$ the quotient of $\R^2-(2\Z)^2$ modulo the group of all even-integer translations.
The lift of $T_0'$ is with respect to the covering map of $\T-1$ by $\reals^2-\integers^2$.)
Each `horizontal' arc labeled by either $1$ or $4$ in $\overline{T}_0$ is a lift of one arc in $T'_0$.
Similarly each `vertical' arc labeled by either $2$ or $5$ is a lift of another arc in $T'_0$, and each `diagonal' arc labeled $3$ or~$6$ is a lift of the third arc in $T'_0$.

To calculate shear coordinates, we first restrict our attention to curves that have finite positive slopes and are either closed or spiral counterclockwise  into $v_{00}$.
Each such curve $\lambda$ is either $\curve(a,b)$ with $0<\frac{b}{a}<\infty$ or $\curve(a,b,E)$ with $v\notch_{00} \in E$ and $0<\frac{b}{a}<\infty$ and hence has a lift $\bar{\lambda}$ in $\R^2-\Z^2$ which passes through the square with vertices $[0,0]$ and $[1,1]$ and either intersects the segment between $[0,0]$ and $[1,0]$ or has a spiral point at the origin.
We describe the interaction between this lift $\bar{\lambda}$ and the lifted triangulation $\overline{T}_0$ with a word $\w(\lambda)$ in letters $\mathtt{r}_2$, $\mathtt{r}_5$, $\mathtt{t}_1$, and $\mathtt{t}_4$.
It will be convenient to think of the subscripts on $\r$ and $\t$ as ``decorations,'' so that, for example, we can count occurrences of $\r$ in a word, ignoring subscripts, but also more specifically count instances of $\r_2$ in a word.  

To define $\w(\lambda)$, we first define a subword $\w'(a,b)$ inherent to the slope $b/a$ and independent of whether $\lambda$ is closed or spirals into $v_{00}$.
Since $b/a$ is positive and finite and $\gcd(a,b)=1$, the line segment connecting $[0,0]$ to $[a,b]$ starts in the unit square with vertices $[0,0]$ and $[1,1]$ and then exits that square through the right edge or the top edge (unless $a=b=1$).
It then exits another square through the right or top, and so forth until it reaches $[a,b]$.
The word $\w'(a,b)$ records $\r$ when the segment exits a square through the right side and $\t$ when it exits through the top, and therefore consists of $a-1$ instances of $\r$ and $b-1$ instances of~$\t$. 
We set $\w'(1,1)$ to be the empty word.
Subscripts keep track of the labels on these right and top edges in Figure~\ref{lift tri}.
For example, $\w'(2,3)=\t_4\r_5\t_1$.  

Now we define $\w(\curve(a,b,E))$ by appending prefixes and suffixes to $\w'(a,b)$ according to the tagging of $v_{ab}$ in $E$: 
\begin{align*}
\w(\curve(a,b,\{v\notch_{00}, v\notch_{ab}\}))&=\t_1\r_2\w'(a,b)\r_k\\
\w(\curve(a,b,\{v\notch_{00}, v_{ab}\}))&=\t_1\r_2\w'(a,b)\t_{\ell}
\end{align*}
The subscript $k$ is $2$ if $a$ is even and $5$ if $a$ is odd, and the subscript $\ell$ is $1$ if $b$ is even and $4$ if $b$ is odd.

\begin{example}
\label{ex: shear2}  
Consider $\lambda = \curve(2,3,\{v\notch_{00},v\notch_{01}\})$ as in Example~\ref{ex: shear1}. 
Since $\w'(a,b) =\t_4\r_5\t_1$, we have $\w(\lambda) =\textcolor{BrickRed}{\t_1\r_2}\t_4\r_5\t_1\textcolor{BrickRed}{\r_2}$. The prefix and suffix are colored red for clarity. 
We decorate the suffix as ${\r_2}$ since $a=2$ is even, because the righthand edge of the unit square with vertices $[a-1, b-1]=[1,2]$ and $[a,b]=[2,3]$ is a lift of arc $\gamma_2$. 
\end{example}

The word $\w(\curve(a,b))$ is defined by taking a lift $\overline{\curve(a,b)}$ that intersects the segment from $[0,0]$ to $[1,0]$, recording a $\t$ for exiting the square with corners $[0,0]$ and $[1,-1]$, and then recording $\r$ and $\t$ until we reach an even-integer translate of the starting point, where we again record a $\t$.
\[\w(\curve(a,b))=\t_1\w'(a,b)\r_k\t_{\ell}\tilde{\w}'(a,b)\r_2\t_1\]
The subscripts $k$ and $\ell$ depend on the parity of $a$ and $b$ as described above, and $\tilde{\w}'(a,b)$ means $\w'(a,b)$ read backwards.

\begin{example}
\label{ex: shear3}
The right picture in Figure~\ref{shear ex fig 1} depicts a lift of $\lambda_C = \curve(2,3)$ that intersects the line segment connecting $[0,0]$ and $[1,0]$. 
We have $\w(\lambda_C)=\textcolor{BrickRed}{\t_1}\t_4\r_5\t_1\textcolor{BrickRed}{\r_2\t_4}\t_1\r_5\t_4\textcolor{BrickRed}{\r_2\t_1}$. 
The prefix, infix, and suffix are colored red for clarity. 
\end{example}

\begin{figure}
\begin{tabular}{c|c|c|c}
\scalebox{0.8}{\includegraphics{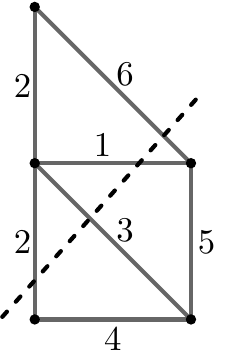}}&\scalebox{0.8}{\includegraphics{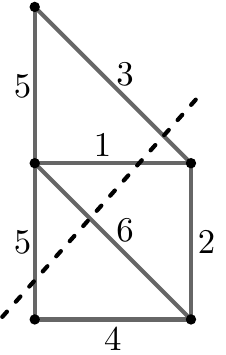}}&\scalebox{0.8}{\includegraphics{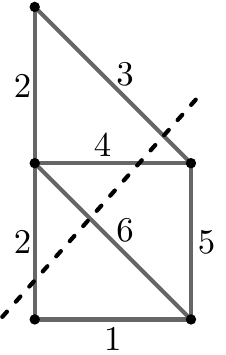}}&\scalebox{0.8}{\includegraphics{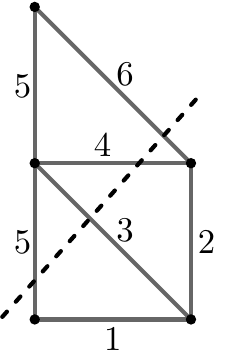}}\\
${\tt r_2t_1}$&${\tt r_5t_1}$&${\tt r_2t_4}$&${\tt r_5t_4}$\\
$[-1,0,0,0,0,0]$&$[-1,0,0,0,0,0]$&$[0,0,0,-1,0,0]$&$[0,0,0,-1,0,0]$\\[3pt]\hline&&&\\[-6pt]
\scalebox{0.8}{\includegraphics{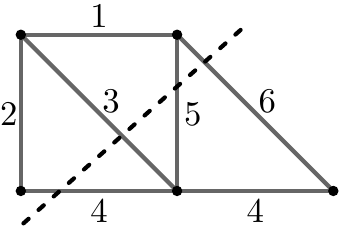}}&\scalebox{0.8}{\includegraphics{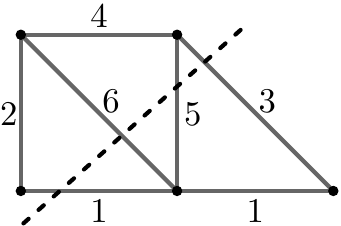}}&\scalebox{0.8}{\includegraphics{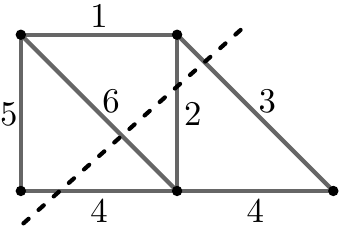}}&\scalebox{0.8}{\includegraphics{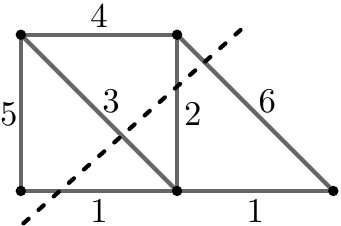}} \\
${\tt t_4r_5}$&${\tt t_1r_5}$&${\tt t_4r_2}$&${\tt t_1r_2}$ \\
$[0,0,0,0,1,0]$&$[0,0,0,0,1,0]$&$[0,1,0,0,0,0]$&$[0,1,0,0,0,0]$\\[3pt]\hline&&&\\[-6pt]
\scalebox{0.8}{\includegraphics{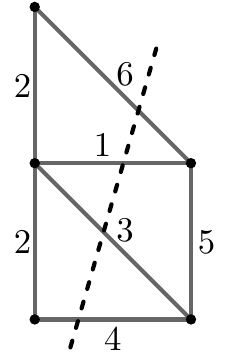}}&\scalebox{0.8}{\includegraphics{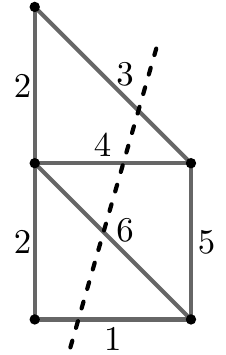}}&\scalebox{0.8}{\includegraphics{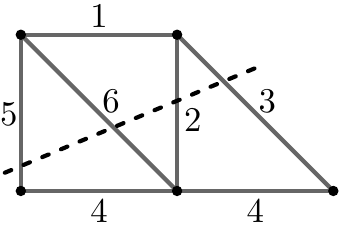}}&\scalebox{0.8}{\includegraphics{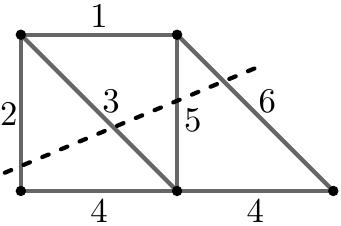}}\\
${\tt t_4t_1}$&${\tt t_1t_4}$&${\tt r_5r_2}$&${\tt r_2r_5}$\\
$[-1,0,1,0,0,0]$&$[0,0,0,-1,0,1]$&$[0,1,0,0,0,-1]$&$[0,0,-1,0,1,0]$\\[4pt]
\framebox{or}&\framebox{or}&\framebox{or}&\framebox{or}\\[4pt]
\scalebox{0.8}{\includegraphics{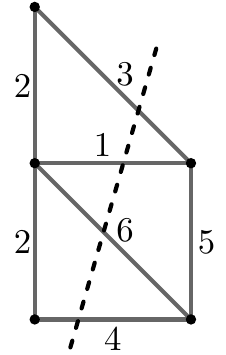}}&\scalebox{0.8}{\includegraphics{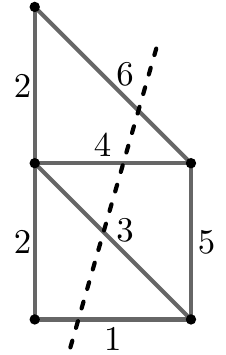}}&\scalebox{0.8}{\includegraphics{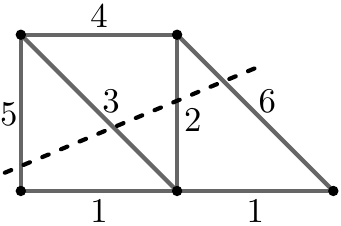}}&\scalebox{0.8}{\includegraphics{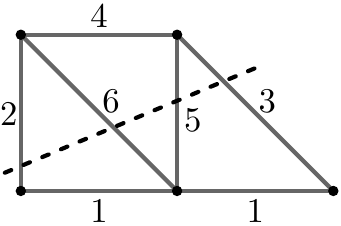}}\\
${\tt t_4t_1}$&${\tt t_1t_4}$&${\tt r_5r_2}$&${\tt r_2r_5}$\\
$[-1,0,0,0,0,1]$&$[0,0,1,-1,0,0]$&$[0,1,-1,0,0,0]$&$[0,0,0,0,1,-1]$ \\[-7pt]&&&
\end{tabular}
\caption{Shear coordinate contributions from consecutive pairs}
\label{fig: shear}
\end{figure}

Four of the six entries of the shear coordinates of $\lambda$ can be read off from $\w(\lambda)$ by recording and summing shear coordinate contributions for each consecutive pair of letters. 
These consecutive pairs encode interactions of the lift $\bar{\lambda}$ with each arc $\bar{\gamma_i} \in \overline{T}_0$, and therefore of $\lambda$ with each arc $\gamma_i \in T_0$.
Using Definition~\ref{shear def}, we record contributions to the shear coordinates as illustrated in Figure~\ref{fig: shear}.
By inspection of Figure~\ref{fig: shear}, we then obtain the following lemma. 

\begin{lemma}\label{lem: horiz. and vert. shear}
Suppose $b/a$ is the standard form of a positive finite slope and $\lambda$ is either $\curve(a,b)$ or $\curve(a,b,E)$ with $v\notch_{00}\in E$, and write $\w$ for $\w(\lambda)$.
\begin{align*}
b_{\gamma_1}(T_0, \lambda) & = -1 \times \left(\#\text{consecutive pairs }\r\t_1\text{ in }\w+ \#\text{consecutive pairs }\t\t_1\text{ in }\w\right) \\
& = -(\#\text{ non-leading instances of }\t_1\text{ in }\w) \\
b_{\gamma_2}(T_0, \lambda) & = 1 \times \left(\#\text{consecutive pairs }\t\r_2\text{ in }\w + \# \text{consecutive pairs }\r\r_2\text{ in }\w \right)\\
& = \# \text{ non-leading instances of }\r_2\text{ in }\w \\
b_{\gamma_4}(T_0, \lambda) & = -1 \times \left(\#\text{consecutive pairs }\r\t_4\text{ in }\w+ \#\text{consecutive pairs }\t\t_4\text{ in }\w\right) \\
& = -(\# \text{ non-leading instances of }\t_4\text{ in }\w) \\
b_{\gamma_5}(T_0, \lambda) & = 1 \times \left(\#\text{consecutive pairs }\t\r_5\text{ in }\w + \# \text{consecutive pairs }\r\r_5\text{ in }\w \right)\\
& = \# \text{ non-leading instances of }\r_5\text{ in }\w
\end{align*}
\end{lemma}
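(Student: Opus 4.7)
The plan is to prove the lemma by reducing it to the observation that the word $\w(\lambda)$ records, letter by letter, the transversal crossings of a straight-line lift $\bar\lambda$ of $\lambda$ with the edges of the triangulation $\overline{T}_0$, and then to extract the shear coordinate contributions directly from Figure~\ref{fig: shear}. First, I would verify that $\w(\lambda)$ corresponds to an isotopy representative of $\lambda$ in minimal intersection position with $T_0$: since $\bar\lambda$ is a straight line segment (or line) with integer or spiral endpoints at lattice points, and the edges of $\overline{T}_0$ are straight line segments, no crossing can be removed by an ambient isotopy. Hence, by Definition~\ref{shear def}, $b_{\gamma_i}(T_0,\lambda)$ can be computed as the signed sum of local contributions at the intersections of $\bar\lambda$ with lifts of $\gamma_i$.

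Each such intersection is encoded by one letter of $\w(\lambda)$, and the local picture of Figure~\ref{shear fig} at that intersection is determined by the preceding letter together with the current letter: the former records the edge through which $\bar\lambda$ entered the current square, and the latter records the edge through which it exits. Figure~\ref{fig: shear} enumerates the twelve possible consecutive-letter pairs and lists the resulting contribution vector in $\Z^6$. The crucial observation I would then highlight is that, on inspection of this table, the $\gamma_i$-coordinate for $i \in \{1,2,4,5\}$ of the contribution vector depends only on the \emph{second} letter of the pair: it equals $-1$ whenever the second letter is $\t_1$ or $\t_4$ (contributing to $\gamma_1$ or $\gamma_4$ respectively), $+1$ whenever it is $\r_2$ or $\r_5$ (contributing to $\gamma_2$ or $\gamma_5$ respectively), and $0$ otherwise. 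The two-case ambiguity depicted in the bottom row of Figure~\ref{fig: shear} for $\t\t$ and $\r\r$ pairs affects only the $\gamma_3$- and $\gamma_6$-coordinates, not the four coordinates addressed by this lemma.

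Summing the contributions over all consecutive letter pairs in $\w(\lambda)$---equivalently, over all non-leading instances of each letter---then yields the four displayed formulas. The main obstacle I anticipate is a careful verification that the prefix $\t_1\r_2$ and the suffix letter (with subscript $k$ or $\ell$ depending on the parities of $a$ and $b$) correctly record all the crossings of $\bar\lambda$ with $\overline{T}_0$ that occur during the spirals into $v_{00}$ and $v_{ab}$, neither double-counting nor missing any. Once one checks by a direct parity-by-parity argument that each such spiral crossing is encoded by exactly the prescribed prefix/suffix letter and no other, the inspection of Figure~\ref{fig: shear} in the previous paragraph finishes the proof.
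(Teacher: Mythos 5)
Your proposal follows essentially the same route as the paper: record contributions from Figure~\ref{fig: shear} for each consecutive pair in $\w(\lambda)$, observe that for the coordinates indexed $1,2,4,5$ the contribution depends only on the second letter of the pair, and sum over non-leading instances. Your aside about minimal position is slightly loose near the spiral ends of $\bar\lambda$ (where the lift is a spiral, not a straight segment), but you correctly flag that the substantive check is that the prefix $\t_1\r_2$ and the parity-dependent suffix faithfully and minimally record the spiral crossings, which the paper handles in the setup defining $\w(\lambda)$ before stating the lemma.
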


Unfortunately, the third and sixth coordinates of $\b(T_0, \lambda)$ are not read off as directly from $\w(\lambda)$.  
The problem is that, as recorded at the bottom of Figure~\ref{fig: shear}, the information in $\w(\lambda)$ is not enough to determine which diagonal arc in the plane is crossed at a certain point in the word.
We handle this subtlety below in Lemma~\ref{lem: shear diagonals}.

\begin{lemma}
\label{lem: shear diagonals}
Suppose $b/a$ is the standard form of a positive finite slope and $\lambda$ is either $\curve(a,b)$ or $\curve(a,b,E)$ with $v\notch_{00}\in E$. 
\begin{align*}
\bigl|b_{\gamma_3}(T_0, \lambda) + b_{\gamma_6}(T_0, \lambda)\bigr|  &= \# \text{consecutive pairs of doubles }\r\r\text{ and }\t\t\text{ in }\w(\lambda)\\
\bigl|b_{\gamma_3}(T_0, \lambda) - b_{\gamma_6}(T_0, \lambda)\bigr|& \leq 1
\end{align*}
\end{lemma}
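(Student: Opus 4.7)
The plan is to combine the contribution table in Figure~\ref{fig: shear} with a careful bookkeeping of which lifted diagonal arc is crossed in each unit square visited by the lift $\bar{\lambda}$ of $\lambda$. First, by direct inspection of Figure~\ref{fig: shear}, only the ``doubles'' $\t\t$ and $\r\r$ among consecutive pairs in $\w(\lambda)$ contribute to $b_{\gamma_3}$ or $b_{\gamma_6}$: each $\t\t$ double contributes $+1$ to exactly one of $b_{\gamma_3}, b_{\gamma_6}$ and each $\r\r$ double contributes $-1$ to exactly one. Therefore $b_{\gamma_3}(T_0,\lambda) + b_{\gamma_6}(T_0,\lambda)$ equals the number of $\t\t$ doubles in $\w(\lambda)$ minus the number of $\r\r$ doubles.

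Next I would show that $\w(\lambda)$ contains doubles of only one type: only $\t\t$ when $b \geq a$ and only $\r\r$ when $a \geq b$. For the interior word $\w'(a,b)$ this is immediate from its geometric definition, because a $\t\t$ (respectively $\r\r$) double forces the line to traverse two consecutive squares vertically (respectively horizontally). The prefix $\t_1\r_2$, the suffixes $\r_k$ and $\t_\ell$, and, for the closed curve, the internal join $\r_k\t_\ell$ and the tail $\r_2\t_1$, are handled by a short case analysis depending on the first and last letters of $\w'(a,b)$. In each case, writing $N$ for the total number of doubles, one reads off $b_{\gamma_3}+b_{\gamma_6} = \pm N$, proving the first equation.

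For the bound $|b_{\gamma_3} - b_{\gamma_6}| \leq 1$, I would examine which lift of $\gamma_3$ or $\gamma_6$ is crossed by each double. The diagonals in $\overline{T}_0$ inherit a $2$-periodic labeling from the fundamental domain of the group $G$ described in Section~\ref{covering sec}; in particular, whether the diagonal in the unit square with lower-left corner $[i,j]$ is a lift of $\gamma_3$ or of $\gamma_6$ depends only on the parity of $i+j$. Since $\bar{\lambda}$ moves from a unit square to an adjacent one (differing by $[1,0]$ or $[0,1]$) at each letter of $\w(\lambda)$, the parity of the lower-left corner flips from one square to the next, so the parity is determined by the position of the square in the traversal sequence modulo $2$. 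The claim then reduces to showing that the squares containing consecutive doubles occur at positions whose indices differ by an odd number. Within a maximal block of $\t$'s (or $\r$'s) in $\w'(a,b)$, consecutive doubles correspond to adjacent squares (index difference $1$), and two such blocks are separated by exactly one letter of the other type (since only one type of double occurs), giving index difference $3$ between the last double of one block and the first of the next. Both differences being odd, the diagonal labels crossed by successive doubles alternate between lifts of $\gamma_3$ and lifts of $\gamma_6$, so their contributions alternate between $b_{\gamma_3}$ and $b_{\gamma_6}$, forcing $|b_{\gamma_3}-b_{\gamma_6}|\leq 1$.

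The main obstacle is the boundary case analysis: one must verify that the doubles arising at the junctions with the prefix $\t_1\r_2$, the suffixes $\r_k$ or $\t_\ell$, and (in the closed-curve case) the internal joins $\r_k\t_\ell$ and $\r_2\t_1$ fit into the alternating parity pattern established for the interior of $\w'(a,b)$. This requires identifying precisely which lift of $\gamma_3$ or $\gamma_6$ each boundary double crosses, using the explicit description of $\overline{T}_0$; once that is done, both equations follow.
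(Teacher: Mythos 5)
Your proposal follows the same overall plan as the paper: (1) only the doubles $\t\t$ and $\r\r$ contribute to $b_{\gamma_3}$ and $b_{\gamma_6}$; (2) $\w(\lambda)$ contains doubles of only one type, so the first equation holds; (3) consecutive doubles alternate between crossings of lifts of $\gamma_3$ and lifts of $\gamma_6$, giving the bound. Step (3) is where your bookkeeping differs cosmetically from the paper's: you track the parity of the lower-left corner of the unit square associated to each consecutive letter pair, while the paper instead shows that the trivial diagonal crossings (non-doubles, i.e.\ $\t\r$ and $\r\t$ pairs) occurring between non-trivial crossings come in pairs, using the fact that each interior letter of the minority type is isolated and hence sits in a subword $\r\t\r$ or $\t\r\t$. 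These are two formalizations of the same alternation fact.

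One small gap in your version of step (3): you assert that ``two such blocks are separated by exactly one letter of the other type, giving index difference~$3$ between the last double of one block and the first of the next.'' This holds when the two blocks are \emph{adjacent} maximal blocks, but a maximal $\r$-block of length~$1$ contributes no double, so consecutive \emph{doubles} can lie in non-adjacent blocks, and the index difference is then $3+2m$ for $m\ge 0$ intervening length-$1$ blocks. The conclusion (odd difference, hence alternating diagonal labels) is still correct, but the ``difference is~$3$'' claim as stated is not. The paper's formulation sidesteps this by counting trivial crossings in pairs rather than tracking indices of blocks. You also flag, correctly, that the prefix $\t_1\r_2$, the suffixes, and the closed-curve joins need to be folded into the same pattern; the paper handles this by observing that the trivial crossings contributed by endpoint spirals do not lie \emph{between} non-trivial crossings and so cannot spoil the alternation, a point worth making explicit in your write-up.
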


\begin{proof}
Observe from Figure \ref{fig: shear} that non-trivial contributions to $b_{\gamma_3}(T_0, \lambda)$ or to $b_{\gamma_6}(T_0, \lambda)$ occur precisely at consecutive pairs $\t\t$ and $\r\r$ in $\w(\lambda)$. 
If $b/a>1$ then the word $\w'(a,b)$ begins and ends with $\t$ and may contain consecutive pairs $\t\t$, but may not contain consecutive pairs $\r\r$.
If $b/a<1$ then $\w'(a,b)$ begins and ends with $\r$ and may contain $\r\r$, but may not contain $\t\t$.
When $\w(\lambda)$ is formed as described above it retains the property that pairs $\r\r$ and $\t\t$ cannot both appear.
The first equation follows by inspection of Figure \ref{fig: shear} as all non-trivial contributions to $b_{\gamma_3}(T_0, \lambda)$ or $b_{\gamma_6}(T_0, \lambda)$ are of the same sign.

The intersections of $\bar{\lambda}$ with diagonal arcs in $\overline{T}_0$ alternate between lifts of $\gamma_3$ and lifts of $\gamma_6$. To prove that $b_{\gamma_3}(T_0, \lambda)$ or $b_{\gamma_6}(T_0, \lambda)$ differ by at most $1$, we check that the subset of {\em non-trivial} intersections alternates as well. 
To this end, we show that all trivial diagonal intersections, with the possible exception of those given by endpoint spirals and therefore not occurring between non-trivial intersections, occur in twos. 
Since trivial diagonal intersections correspond to consecutive pairs $\t\r$ and $\r\t$, it suffices to show that interior occurrences of these pairs in $w$ come in sets of two. 

If $0<b/a<1$, then $\w(\lambda)$ contains only consecutive pairs from the set $\set{\r\t,\t\r,\r\r}$ and $\w'(a,b)$ begins and ends with $\r$. 
It follows that each instance of $\t$ in the interior of $\w(\lambda)$, and hence each interior consecutive pair $\r\t$, occurs in a subword $\r\t\r$ corresponding to two trivial diagonal intersections. 
Analogously, if $1<b/a<\infty$, then $\w'(a,b)$ begins and ends with {\tt t} and contains only consecutive pairs from the set $\{\r\t, \t\r, \t\t\}$, implying that each interior instance of {\tt r} occurs in a subword {\tt trt}.  
We conclude that either $b_{\gamma_3}(T_0, \lambda) = b_{\gamma_6}(T_0, \lambda)$ or $\big|b_{\gamma_3}(T_0, \lambda) - b_{\gamma_6}(T_0, \lambda)\big|=1$.
\end{proof}

With Lemmas~\ref{lem: horiz. and vert. shear} and \ref{lem: shear diagonals} in hand, we have reduced the computation of $\b(T_0, \lambda)$ in 
the special cases of a finite, positively-sloped curve $\lambda$ which is closed or spirals counterclockwise into $v_{00}$
 to the problem of counting instances of non-leading decorated letters and decorated consecutive letter pairs in the word $\w(\lambda)$.   
We will solve the counting problem and then use symmetry to extend our results to curves spiraling clockwise into $v_{00}$ to prove the following proposition.

\begin{proposition}
\label{prop: positive shear} 
The following curves have the following shear coordinates with respect to $T_0$:

\noindent
\begin{tabular}{lll}
$1.$ 
& \!\!\!\small$\begin{array}{l}\curve(a,b,\{v\notch_{00}, v\notch_{ab}\})\\0<b/a\le\infty\end{array}$ 
& $\begin{bmatrix} -\floor*{\frac{b-1}{2}}, \floor*{\frac{a}{2}}+1, \floor*{\frac{b-a}{2}}, -\floor*{\frac{b}{2}}, \floor*{\frac{a+1}{2}}, \floor*{\frac{b-a-1}{2}} \end{bmatrix}$ 
\\[8pt]
$2.$ 
& \!\!\!\small$\begin{array}{l}\curve(a,b,\{v\notch_{00}, v_{ab}\})\\0\le b/a\le\infty\end{array}$ 
& $\begin{bmatrix}  -\floor*{\frac{b}{2}}, \floor*{\frac{a+1}{2}}, \floor*{\frac{b-a+1}{2}}, -\floor*{\frac{b+1}{2}}, \floor*{\frac{a}{2}}, \floor*{\frac{b-a}{2}}   \end{bmatrix}$ 
\\[8pt]
$3.$ 
& \!\!\!\small$\begin{array}{l}\curve(a,b,\{v_{00}, v\notch_{ab}\})\\0\le b/a\le\infty\end{array}$ 
& $\begin{bmatrix}  -\floor*{\frac{b+1}{2}}, \floor*{\frac{a}{2}}, \floor*{\frac{b-a}{2}}, -\floor*{\frac{b}{2}}, \floor*{\frac{a+1}{2}}, \floor*{\frac{b-a+1}{2}}   \end{bmatrix}$ 
\\[8pt]
$4.$ 
& \!\!\!\small$\begin{array}{l}\curve(a,b,\{v_{00}, v_{ab}\})\\0\le b/a<\infty\end{array}$ 
& $\begin{bmatrix}  -\floor*{\frac{b}{2}}-1, \floor*{\frac{a-1}{2}}, \floor*{\frac{b-a+1}{2}}, -\floor*{\frac{b+1}{2}}, \floor*{\frac{a}{2}}, \floor*{\frac{b-a}{2}}+1   \end{bmatrix}$ 
\\[8pt]
$5.$ 
&  \!\!\!\small$\begin{array}{l}\curve(a,b)\\0\le b/a\le\infty\end{array}$ 
& $\begin{bmatrix}  -b, a, b-a, -b, a, b-a  \end{bmatrix}$ 
\end{tabular}
\end{proposition}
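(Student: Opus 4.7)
The plan is to first establish Items $1$ and $5$ directly from Lemmas~\ref{lem: horiz. and vert. shear} and~\ref{lem: shear diagonals} by counting decorated letters in $\w(\lambda)$, then derive Items $2$, $3$, and $4$ from Item $1$ using symmetries of the four-punctured sphere that send the triangulation $T_0$ to itself (up to relabeling). The edge cases $b/a \in \{0,\infty\}$ are treated separately as base cases.

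For Item $1$, fix $0 < b/a < \infty$ with $\gcd(a,b)=1$ and set $\lambda = \curve(a,b,\{v\notch_{00}, v\notch_{ab}\})$, so $\w(\lambda) = \t_1 \r_2 \w'(a,b) \r_k$. The word $\w'(a,b)$ contains $a-1$ instances of $\r$ and $b-1$ instances of $\t$. The key counting fact is that as the straight line from $[0,0]$ to $[a,b]$ crosses a vertical edge $\{x = p\} \times [q, q+1]$, the subscript of the corresponding $\r$ is $\r_2$ if $p$ is even and $\r_5$ if $p$ is odd (and analogously for $\t_1, \t_4$ using parity of $q$). Because $\gcd(a,b)=1$, these crossings alternate in a controlled way, and counting the even-$p$ versus odd-$p$ crossings among $p \in \{1, \dots, a-1\}$ (plus accounting for the prefix and suffix of $\w(\lambda)$) yields exactly the floor expressions $\floor{\frac{a}{2}}+1$, $\floor{\frac{a+1}{2}}$, $-\floor{\frac{b-1}{2}}$, and $-\floor{\frac{b}{2}}$ of Item $1$ via Lemma~\ref{lem: horiz. and vert. shear}. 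For the diagonal coordinates, Lemma~\ref{lem: shear diagonals} gives that $|b_{\gamma_3} + b_{\gamma_6}|$ equals the number of $\t\t$ or $\r\r$ pairs in $\w(\lambda)$, which is $|b-a|$, with sign determined by which of the two pair types appears. To separate $b_{\gamma_3}$ from $b_{\gamma_6}$, I would observe that the non-trivial diagonal crossings alternate between lifts of $\gamma_3$ and lifts of $\gamma_6$ (this is the alternation argument already used in Lemma~\ref{lem: shear diagonals}), identify the first such crossing by a parity check on the unit square where the first double letter occurs in $\w(\lambda)$, and thereby read off $b_{\gamma_3}$ and $b_{\gamma_6}$ individually as $\floor{\frac{b-a}{2}}$ and $\floor{\frac{b-a-1}{2}}$.

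For Item $5$, the word $\w(\curve(a,b))$ has length exactly twice that of $\w'(a,b)$ plus four, and is built by concatenating $\w'(a,b)$ with its reverse $\widetilde{\w}'(a,b)$ with appropriate prefix and suffix. Because reversing a word preserves the total count of each decorated letter and of each unordered consecutive letter pair, the non-leading counts in Lemma~\ref{lem: horiz. and vert. shear} simply double, giving the coordinates $-b, a, b-a$ cleanly. The diagonal coordinates are then forced to be equal by the alternation argument (the closed curve has no endpoints, so the total diagonal crossings split evenly between $\gamma_3$ and $\gamma_6$), yielding $b_{\gamma_3} = b_{\gamma_6} = b-a$.

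For Items $2$, $3$, $4$, I would exploit the symmetries of $T_0$ that correspond to the coordinate permutations $(14)(25)(36)$ (the ``antipodal'' symmetry swapping each arc of $T_0$ with its parallel partner) and the action that changes a notched tagging to plain by composing with a half-turn about the appropriate puncture. Specifically, $\curve(a,b,\{v\notch_{00},v_{ab}\})$ is related to $\curve(a,b,\{v\notch_{00},v\notch_{ab}\})$ by reversing the spiral at $v_{ab}$, which alters only the suffix of $\w(\lambda)$ and changes the counts in a controlled way; direct inspection of the suffix change converts the formula of Item $1$ into that of Item $2$. Item $3$ arises from Item $2$ by reflecting $T_0$ and using the identity $\floor{-x/2} = -\floor{(x+1)/2}$, and Item $4$ arises from combining both changes. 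The main obstacle in executing this plan will be the parity bookkeeping for the diagonal coordinates, since Lemma~\ref{lem: shear diagonals} determines $b_{\gamma_3}$ and $b_{\gamma_6}$ only up to a single unit, and pinning down which of the two receives the extra unit requires tracking the first diagonal crossing in the lifted picture through all four parity classes of $(a \bmod 2, b \bmod 2)$.
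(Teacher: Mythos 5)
Your proposal follows essentially the same route as the paper's proof: Items 1 and 5 by counting decorated letters and consecutive pairs in $\w(\lambda)$ via Lemmas~\ref{lem: horiz. and vert. shear} and~\ref{lem: shear diagonals} (tracking the alternation $\t_1,\t_4,\dots$ and $\r_2,\r_5,\dots$ and the starting diagonal label), Item 2 by directly inspecting the suffix change of the word, and Items 3 and 4 via the reflection about $y=x$ (which swaps $a$ and $b$, negates all coordinates, applies $(12)(45)$, and interchanges notched and plain), together with the identity $\floor*{-\frac{k}{2}}=-\floor*{\frac{k+1}{2}}$. The only discrepancy is your opening mention of the ``antipodal'' permutation $(14)(25)(36)$: that symmetry corresponds to translating the lift by $[1,1]$ and is used elsewhere (in the proof of Theorem~\ref{thm: shear}) to handle curves not incident to $v_{00}$, but it plays no role in Proposition~\ref{prop: positive shear}; the concrete steps you then describe (suffix change plus $y=x$ reflection) are the ones the paper actually uses, so this is a red herring rather than a gap.
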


\begin{proof}
One can check directly that the given formulas work, or don't work, for the cases $b/a=0$ and $b/a=\infty$ as indicated. 
For the other cases of positive finite slope and $v\notch_{00}\in E$, we argue using the word $\w(a,b)$.
If $\lambda=\curve(a,b,\{v\notch_{00}, v\notch_{ab}\})$, then $\w(\lambda)=\t_1\r_2\w'(a,b)\r_k$, with $k=2$ if $a$ is even or $k=5$ if $a$ is odd.
Since $\w'(a,b)$ has $b-1$ instances of $\t$ and $a-1$ instances of $\r$, we see that $\w(\lambda)$ has $b-1$ non-leading instances of $\t$, alternating $\t_4, \t_1, \t_4, \ldots$, and $a+1$ non-leading instances of $\r$, alternating $\r_2, \r_5, \r_2, \ldots$.
Lemma~\ref{lem: horiz. and vert. shear} thus gives us the first, second, fourth, and fifth coordinates of $\b(T_0, \lambda)$:

\begin{tabular}{cl}
Horizontal Arcs & $b_{\gamma_1}(T_0, \lambda)= -1 \times \floor*{\frac{b-1}{2}} = - \floor*{\frac{b-1}{2}}$\\[8pt]
& $b_{\gamma_4}(T_0, \lambda)= -1 \times \ceil*{\frac{b-1}{2}} = - \ceil*{\frac{b-1}{2}} = -\floor*{\frac{b}{2}}$\\[12pt]
Vertical Arcs & $b_{\gamma_2}(T_0, \lambda)= 1 \times \ceil*{\frac{a+1}{2}} = \floor*{\frac{a+2}{2}} = \floor*{\frac{a}{2}}+1$\\[8pt]
& $b_{\gamma_5}(T_0, \lambda)= 1 \times \floor*{\frac{a+1}{2}} = \floor*{\frac{a+1}{2}}$
\end{tabular}\\

To obtain the third and sixth coordinates of $\b(T_0, \lambda)$, we count the number of consecutive pairs {\tt rr} (if $0<b/a \leq 1$), or {\tt tt} (if $b/a > 1$), and then apply Lemma~\ref{lem: shear diagonals}. 
Since the word $\w(\lambda)$ is of length $b+a+1$, it contains $b+a$ consecutive letter pairs. 
If $0<b/a\leq1$, then since $\w(\lambda)$ contains no consecutive pairs $\t\t$ and $b-1$ non-leading instances of $\t$, it contains $b$ consecutive letter pairs $\t\r$, $b-1$ consecutive letter pairs $\r\t$, and hence $b+a-b-(b-1)=a-b+1$ consecutive letter pairs $\r\r$.
Each of these corresponds to a non-trivial diagonal crossing of type $-1$, alternating $\gamma_6, \gamma_3, \ldots,$ for a total of $-1(a-b+1)=b-a-1< 0$. 
If $b/a >1$, then $\w(\lambda)$ contains $a+1$ consecutive letter pairs $\t\r$, $a$ consecutive letter pairs $\r\t$, and hence $b+a-(a+1)-a=b-a-1$ consecutive letter pairs $\t\t$, each corresponding to a non-trivial diagonal crossing of type $1$, now alternating $\gamma_3, 
\gamma_6, \ldots$, again for a total of $b-a-1\geq 0$. 
Regardless, we obtain\\

\begin{tabular}{cl}
Diagonal Arcs & $b_{\gamma_3}(T_0, \lambda) =\ceil*{\frac{b-a-1}{2}} = \floor*{\frac{b-a}{2}}$\\[8pt]
& $b_{\gamma_6}(T_0, \lambda) =\floor*{\frac{b-a-1}{2}}$
\end{tabular}\\

To get $\w(\curve(a,b,\{v\notch_{00}, v_{ab}\}))$ from $\w(\curve(a,b,\{v\notch_{00}, v\notch_{ab}\}))$, we delete the final $r_k$ (with $k=2$ if $a$ is even and $k=5$ if $a$ is odd) and append a final $t_\ell$ (with $\ell=1$ if $b$ is even and $\ell=4$ if $b$ is odd). 
Using Lemma~\ref{lem: horiz. and vert. shear}, we see that this change in words 
subtracts $1$ from the $\gamma_1$-coordinate if $b$ is even,
subtracts $1$ from the $\gamma_2$-coordinate if $a$ is even,
subtracts $1$ from the $\gamma_4$-coordinate if $b$ is odd, and
subtracts $1$ from the $\gamma_5$-coordinate if $a$ is odd.
This change in words adds a $\t\t$ pair if $b>a$, thus adding a nontrivial intersection with $\gamma_3$ if $b-a$ is odd or with $\gamma_6$ if $b-a$ is even.
The change destroys an $\r\r$ pair if $b\leq a$, thus destroying a nontrivial intersection with $\gamma_3$ if $b-a$ is odd or with $\gamma_6$ if $b-a$ is even.
We see that the formula for shear coordinates of $\w(\curve(a,b,\{v\notch_{00}, v_{ab}\}))$ follows from the formula for shear coordinates of $\w(\curve(a,b,\{v\notch_{00}, v\notch_{ab}\}))$.

Reflecting the plane about the line $y=x$ maps lifts of $\gamma_1$ to lifts of $\gamma_2$, maps lifts of $\gamma_4$ to lifts of $\gamma_5$, fixes the set of lifts of $\gamma_3$ and fixes the set of lifts of $\gamma_6$.
The reflection also takes a lifted curve of slope $b/a$ to a lifted curve of slope $a/b$ with reversed spiral directions, and changes intersections of type $1$ of the lifted curve with arcs in $\overline{T}_0$ 
(see Figure~\ref{shear fig}) to intersections of type $-1$ and vice versa. 
Thus the shear coordinate vector for $\curve(a,b,\{v_{00},v_{ab}\})$ is obtained from the shear coordinate vector for $\curve(b,a,\{v\notch_{00},v\notch_{ba}\})$ by negating all coordinates and permuting the coordinates via $(12)(45)$.
(See Example~\ref{ex: shear4}.)
The shear coordinate vector for $\curve(a,b,\{v\notch_{00},v_{ab}\})$ is obtained from that for $\curve(b,a,\{v_{00},v\notch_{ba}\})$ in the same manner. 
Making use of the identity $\floor*{-\frac{k}{2}} = -\floor*{\frac{k+1}{2}}$ for $k \in \integers$, we deduce the third and fourth items in the proposition from the first two.
The calculation of shear coordinates of $\curve(a,b)$ is very similar to that for $\curve(a,b,\{v\notch_{00}, v\notch_{ab}\})$, and we omit the details.
\end{proof}

\begin{figure}\centering
\scalebox{1}{
\begin{tabular}[t]{ccccc}
\includegraphics{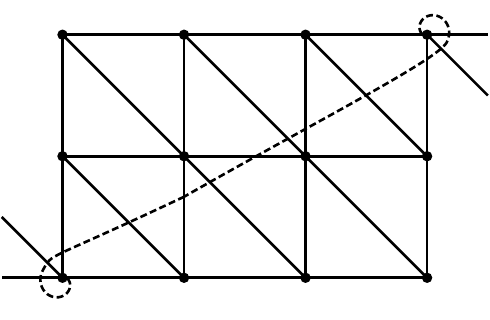}
\begin{picture}(0,0)(88, -7)
\put(-55,-4){\small$00$}
\put(55,81){\small$10$}
\put(-37,8){\scriptsize$1$}
\put(-35,-4){\textcolor{BrickRed}{\scriptsize$-1$}}
\put(-8,30){\scriptsize$5$}
\put(-3,24){\textcolor{BrickRed}{\scriptsize$+1$}}
\put(13,42){\scriptsize$4$}
\put(15,34){\textcolor{BrickRed}{\scriptsize$-1$}}
\put(27,50){\scriptsize$2$}
\put(33,45){\textcolor{BrickRed}{\scriptsize$+1$}}
\put(52,69){\scriptsize$1$}
\put(40,78){\textcolor{BrickRed}{\scriptsize$-1$}}
\end{picture}
&&&
\includegraphics{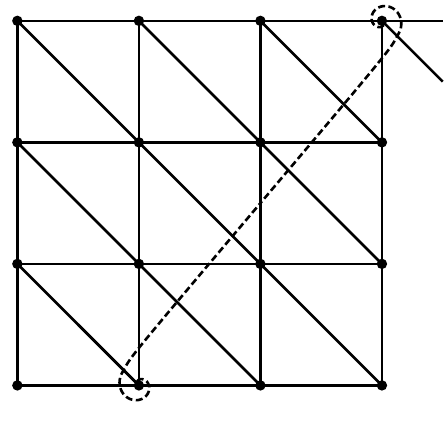}
\begin{picture}(0,128)(88, -7)
\put(-38,-4){\small$00$}
\put(0,-4){\small$10$}
\put(55,114){\small$11$}
\put(-8,20){\scriptsize$5$}
\put(-2,14){\textcolor{BrickRed}{\scriptsize$+1$}}
\put(13,42){\scriptsize$4$}
\put(15,33){\textcolor{BrickRed}{\scriptsize$-1$}}
\put(27,59){\scriptsize$2$}
\put(33,55){\textcolor{BrickRed}{\scriptsize$+1$}}
\put(47,69){\scriptsize$1$}
\put(37,78){\textcolor{BrickRed}{\scriptsize$-1$}}
\put(56,100){\textcolor{BrickRed}{\scriptsize$+1$}}
\put(68,94){\scriptsize$5$}
\end{picture}\\
{\small$\b(T_0, \lambda')=[-2,1,0,-1,1,0]$}
&&&
{\small$\b(T_0, \lambda''')=[-1,0,0,-1,1,2]$}
\end{tabular}}\\[10pt]
\includegraphics{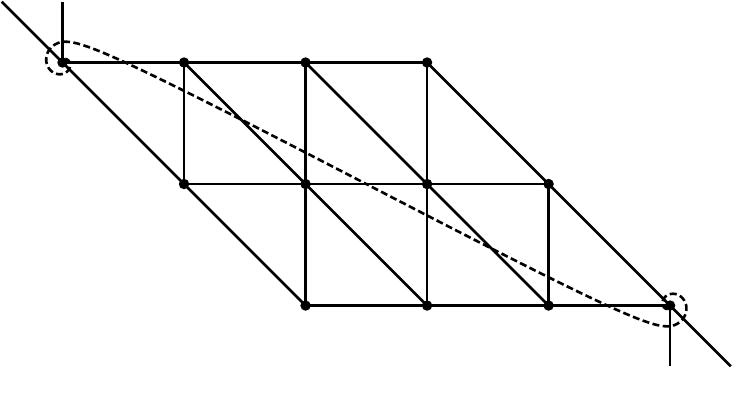}
\begin{picture}(0,0)(193, -100)
\put(-8,-11){\small$00$}
\put(169,-64){\small$10$}
\put(18,2){\scriptsize$1$}
\put(6,-7){\textcolor{BrickRed}{\scriptsize$+1$}}
\put(47,-14){\scriptsize$6$}
\put(39,-24){\textcolor{BrickRed}{\scriptsize$-1$}}
\put(85,-33){\scriptsize$4$}
\put(77,-42){\textcolor{BrickRed}{\scriptsize$+1$}}
\put(120,-52){\scriptsize$3$}
\put(110,-59){\textcolor{BrickRed}{\scriptsize$-1$}}
\put(153,-67){\textcolor{BrickRed}{\scriptsize$+1$}}
\put(151,-76){\scriptsize$1$}
\put(35, -92){\small$\b(T_0, \lambda'')=[2,0,-1,1,0,-1]$}
\end{picture}
\caption{More shear coordinate calculations.}
\label{shear ex fig 2}
\end{figure}

\begin{example}
\label{ex: shear4} 
Consider the allowable curve $\lambda'=\curve(3,2,\{v_{00},v_{10}\})$, with lift $\bar{\lambda}'$ depicted in the top-left picture of Figure~\ref{shear ex fig 2}.
We compute the shear coordinate vector $\b(T_0, \lambda')=[-2,1,0,-1,1,0]$. 
The curve $\lambda'$ is obtained from the allowable curve $\lambda$ of Examples~\ref{ex: shear1} and \ref{ex: shear2} via reflection about the line $y=x$, corresponding to the relation of $\b(T_0, \lambda')$ to $\b(T_0, \lambda)$ via coordinate negation and permutation by $(12)(45)$.
\end{example}

\begin{proof}[Proof of Theorem~\ref{thm: shear}]
\label{proof of shear}
Theorem~\ref{q-lam bij} implies that the map from allowable curves to shear coordinates is one-to-one. 
By Proposition~\ref{prop: allowable curves}, allowable curves in the four-punctured sphere are in bijection with the set of pairs $(a,b)$ together with the set of triples $(a,b,E)$, where $a$ and $b$ specify a rational slope in standard form and $E$ specifies a tagged vertex set. 
Thus, listing the shear coordinates with respect to $T_0$ of these allowable curves without repetition amounts to listing one shear coordinate for each pair and triple. 
Proposition~\ref{prop: positive shear} handles the curves with non-negative or infinite slopes that are either closed or have a spiral point at $v_{00}$. 
We deal now with the remaining curves, beginning by removing the condition of a spiral point at $v_{00}$.

A lift of every non-closed curve of non-negative or infinite slope without a spiral point at $v_{00}$ may be obtained from a lift of one of the non-closed curves of Proposition~\ref{prop: positive shear} by a translation $[0,1], [1,0]$, or $[1,1]$. 
These translations preserve the triangulation $\overline{T}_0$ of $\reals^2$, simply permuting the labels of arcs. 
As a result, the maps affect shear coordinate vectors through a permutation of coordinates: $(14)(36), (25)(36)$, and $(14)(36)\cdot(25)(36)=(14)(25)$, respectively.
We claim that by applying the cyclic group $X = \langle (14)(25)(36) \rangle$ to the vectors listed in Item 1 and Item 4 of Proposition~\ref{prop: positive shear}, and applying the group $Y=\langle (14)(36),(25)(36) \rangle$ to the vectors listed in Item 2, with $b/a$ varying over all standard forms of rational slopes in the specified ranges for each, we obtain a complete and irredundant list of shear coordinates for allowable non-closed curves of non-negative or infinite slope. 
This follows from the following observations:
\begin{enumerate}
\item For any standard form $b/a$, there are two choices for the set $\{v\notch_{pq}, v\notch_{p+a, q+b}\}$. Since translation preserves spiral direction, for each curve handled in Item 3 of Proposition~\ref{prop: positive shear}, application of our three translations produces the lift of only one additional distinct curve: the other curve with slope $b/a$ and counterclockwise spirals at each spiral point. Analogously, application of these three translations to each curve handled in Item 4 of Proposition~\ref{prop: positive shear} also only produces the lift of one new curve. 
\item On the other hand, each of the 3 translations produces a distinct curve when applied to the differently-spiraled curves 
(with counterclockwise spiral at $v_{00}$)
handled in Item 2 of Proposition~\ref{prop: positive shear}, and a lift of every differently-spiraled curve of slope $b/a$ may be obtained in this manner.
\item In particular, translation of the plane by $[a,b]$ with respect to a lift of $\curve(a,b, \{v\notch_{00}, v_{ab}\})$ yields a lift of $\curve(a,b,\{v_{00}, v\notch_{ab}\})$. This accounts for our omitting Item 3 in Proposition~\ref{prop: positive shear} from Theorem~\ref{thm: shear}.
\item In the formulas of Proposition~\ref{prop: positive shear}, pairs of coordinates corresponding to parallel arcs in $T_0$ (namely, 14, 25, and 36) are either equal or differ by 1, depending upon $a$ and $b$ modulo two. 
\item In the vectors listed in Item 1 and Item 4 of Proposition~\ref{prop: positive shear}, exactly one pair of coordinates differs for each choice of standard form slope $b/a$. That is, exactly one of the transpositions in the coordinate permutation $(14)(25)(36)$ acts non-trivially, and the result is the shear coordinates for the other counterclockwise- (respectively, clockwise-) spiraled curve of slope $b/a$. 
\item In the vectors listed in Item 2 of Proposition~\ref{prop: positive shear}, exactly two pairs of coordinates differ for each choice of standard form slope $b/a$. Swapping one, the other, or both of these pairs of coordinates results in the shear coordinates for the other three differently-spiraled curves of slope $b/a$. Case-by-case computation confirms that the application of the group $Y$ corresponds to these permutations. 
\end{enumerate}
 
We proceed now to the allowable curves of negative slope.
The linear map $[a,b] \mapsto [a+b, -a]$ restricts to a bijection from $\{[a,b] : a,b\ge0\}$ to ${\set{[a,b]:-a\le b\le 0}}$, mapping curves with nonnegative or infinite slope $b/a$ to curves with negative slope $-1 \le -a/(a+b) \le 0$. 
Similarly, the map $[a,b] \mapsto [b,-a-b]$ restricts to a bijection from $\{[a,b] : a,b\ge0\}$ to $\{[a,b] : 0\le a\le -b\}$, mapping curves with nonnegative or infinite slope $b/a$ to curves with negative slope $-\infty \le (-a-b)/b \le -1$.   
(The notation $-\infty$ serves to make the inequality meaningful, but of course we do not distinguish between positive and negative infinite slopes.) 
Both maps preserve spiraling direction since they rotate and stretch the plane but do not reflect, and also preserve the triangulation $\overline{T}_0$ of $\reals^2$, simply permuting the labels of arcs. 
As a result, these linear maps affect shear coordinate vectors through a permutation of coordinates. 
For example, if $-a\le b\le0$ and the shear coordinates of $\curve(-b,a+b, \{v\notch_{00}, v\notch_{b,a+b}\})$ are $[x_1,x_2,x_3,x_4,x_5,x_6]$, then the shear coordinates of $\curve(a,b, \{v\notch_{00}, v\notch_{ab}\})$ are $[x_2,x_3,x_1,x_5,x_6,x_4]$.
(See Example~\ref{ex: shear5} for a concrete illustration.)
This happens because the map $[a,b] \mapsto [a+b, -a]$ sends the lifts of horizontal arcs to lifts of diagonal arcs, lifts of vertical arcs to lifts of horizontal arcs, and lifts of diagonal arcs to lifts of vertical arcs, corresponding to the coordinate permutation $[(123)(456)]^2$.
Similarly, the map $[a,b] \mapsto [b, -a-b]$ corresponds to the coordinate permutation $(123)(456)$.

Thus by letting $b/a$ vary over all standard forms of rational slopes in $(0, \infty]$, constructing the vectors in Proposition~\ref{prop: positive shear}, and then applying the cyclic group $Z = \langle (123)(456) \rangle$, we obtain a complete list of shear coordinates for allowable curves that are either closed or have spiral points at $v_{00}$. The half-open range for $b/a$ guarantees that our list is irredundant. 
By first applying the cyclic group $X$ to the vectors in the first and fourth items and the group $Y$ to the vectors in the second item, and then applying $Z$ to the results (or equivalently, applying the permutations in the sets $Z \cdot X$ and $Z \cdot Y$) along with applying $Z$ to vectors in the fifth item, we obtain a complete and irredundant list of {\em all} shear coordinates.
\end{proof}

\begin{example}
\label{ex: shear5}
Consider $\lambda'' = \curve(5,-2,\{v\notch_{00},v\notch_{10}\})$ whose lift $\bar{\lambda}''$ is shown in the bottom picture of Figure~\ref{shear ex fig 2}, with shear coordinate vector $\b(T_0, \lambda'')=[2,0,-1,1,0,-1]$. 
Note that $\bar{\lambda}''$ is obtained from the lift of the curve $\lambda$ of Examples~\ref{ex: shear1} and ~\ref{ex: shear2} via application of the map $[-b, a+b] \mapsto [a,b]$, and $\b(T_0, \lambda'')$ is obtained from $\b(T_0, \lambda)$ by applying the coordinate permutation $[(123)(456)]^2=(132)(654)$.
\end{example}

\begin{example}
\label{ex: shear6}
The curve $\lambda'''= \curve(2,3,\{v\notch_{10},v\notch_{11}\})$ whose lift $\bar{\lambda'''}$ is shown in the top right picture of Figure~\ref{shear ex fig 2} has shear coordinate vector $\b(T_0, \lambda''')=[-1,1,0,-1,2,0]$. 
Note that $\bar{\lambda''}$ is obtained from the lift of $\lambda$ by a one unit translation to the right, and $\b(T_0, \lambda''')$ is obtained from $\b(T_0, \lambda)$ by applying the coordinate permutation $(14)(25)(36),$ where only the second transposition, $(25)$, acts non-trivially. 
\end{example}

\section{The Null Tangle Property}\label{NTP sec}  
With the classification of allowable curves and explicit shear coordinates complete, we prove Theorem~\ref{sphere NTP}. 
A  key tool in the proof is the Null Tangle Property for the once-punctured torus, which was shown to hold in \cite[Theorem~3.2]{unitorus}.
We begin by showing that null tangles in $\Sp-4$ are supported on closed curves.
\begin{proposition}\label{support_closed}
Fix a null tangle $\Xi$ in the four-punctured sphere.
The support of $\Xi$ contains only closed curves.
\end{proposition}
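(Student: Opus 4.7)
The plan is to reduce to the Null Tangle Property for the once-punctured torus $\T-1$ established in \cite[Theorem~3.2]{unitorus}. Given a null tangle $\Xi$ in $\Sp-4$, I would construct a corresponding tangle $\hat\Xi$ in $\T-1$ that is itself null, and then conclude from the torus case that the non-closed part of $\Xi$ vanishes.

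For each non-closed allowable curve $\lambda$ in the support of $\Xi$, I would associate an allowable curve $\hat\lambda$ in $\T-1$ via Proposition~\ref{bij to symmetric arcs}: after possibly applying a puncture-permuting automorphism of $\Sp-4$ to move a spiral point of $\lambda$ to $v_{00}$, the tagged arc $\kappa^{-1}(\lambda)$ corresponds to a $\sigma$-symmetric taggable arc in $\T-1$, which I would tag according to the tagging of $\lambda$ at $v_{00}$ and push through $\kappa$ to produce $\hat\lambda$. Closed curves in $\Xi$ correspond directly to closed allowable curves in $\T-1$ via their shared lifts in the common covering $\reals^2-\integers^2$. Transferring weights appropriately (with closed-curve weights doubled to account for the two-to-one covering $\T-4 \to \Sp-4$), I would define $\hat\Xi$. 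Then, for each tagged triangulation $\hat T$ of $\T-1$, I would lift $\hat T$ to the plane and descend to a tagged triangulation $T$ of $\Sp-4$ in which each arc of $\hat T$ lifts to a pair of arcs of $T$ related by $\sigma$ (as happens for $T_0$ and $T_0'$ in Figures~\ref{tri and mat} and~\ref{tri and mat torus}). Comparing intersections in the plane should yield pair-sum identities of the form $b_{\hat\gamma}(\hat T, \hat\lambda) = b_\gamma(T, \lambda) + b_{\gamma'}(T, \lambda)$ for non-closed $\lambda$, and the stronger $b_{\hat\gamma}(\hat T, \hat\lambda) = b_\gamma(T, \lambda) = b_{\gamma'}(T, \lambda)$ for closed $\lambda$ (consistent with the structure $x_1 = x_4$, $x_2 = x_5$, $x_3 = x_6$ visible in Item~4 of Theorem~\ref{thm: shear}). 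Summing over $\Xi$, the null-tangle hypothesis $\b(T, \Xi) = 0$ would then yield $\b(\hat T, \hat\Xi) = 0$ for every $\hat T$, so $\hat\Xi$ is null in $\T-1$. The Null Tangle Property for $\T-1$ forces $\hat\Xi$ trivial, and in particular every non-closed curve in $\Xi$ has zero weight, which is what we wanted to show.

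The main obstacle is carefully verifying the pair-sum identities for shear coordinates, tracking how intersections in the common covering $\reals^2-\integers^2$ project to $\Sp-4$ versus $\T-1$ with correct signs. An additional subtlety is that non-closed curves of $\Xi$ whose spiral points avoid $v_{00}$ must be handled via the puncture-permuting symmetry, most naturally by combining information from separate reductions to $\T-1$-type covers based at each of the four punctures of $\Sp-4$.
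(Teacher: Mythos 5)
Your approach is genuinely different from the paper's: the paper proves this proposition by a direct argument using Lemma~\ref{Farey_triple} to construct a type-I triangulation $\cT$ whose Farey-1 triple ``sandwiches'' the slope of a given non-closed curve $\lambda$ so that $b_{\gamma_2}(\cT,\lambda)=1$ while all other curves in $\Xi$ have nonpositive $\gamma_2$-shear coordinate, then invokes Proposition~\ref{separation} (the quoted result \cite[Proposition~7.11]{unisurface}) to conclude $w_\lambda=0$. The paper reserves the reduction to $\T-1$ for \emph{closed} curves only (in the proof of Theorem~\ref{sphere NTP} \emph{after} this proposition), precisely because the closed-curve shear coordinates have the pair-equal structure $x_1=x_4$, $x_2=x_5$, $x_3=x_6$ that makes the projection to $\reals^3$ injective.

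Your proposal has a genuine gap: the map $\lambda\mapsto\hat\lambda$ is badly many-to-one, and as a result the triviality of $\hat\Xi$ does not imply the triviality of the non-closed part of $\Xi$. Concretely, for a fixed slope $b/a$ there are eight non-closed allowable curves in $\Sp-4$ (two choices of endpoint set $E$ times four choices of spiral directions), whereas in $\T-1$ there are only two non-closed allowable curves of slope $b/a$ (the two spirals at the unique puncture must agree). Your construction forgets the endpoint set of $\lambda$ and at least one of its two spiral directions, so several distinct sphere curves are sent to the same torus curve, and the weight of $\hat\lambda$ in $\hat\Xi$ is the \emph{sum} of the weights of its preimages. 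The torus Null Tangle Property then tells you only that these sums vanish, not that each individual $w_\lambda$ vanishes. Worse, the pair-sum projection $[b_1,\ldots,b_6]\mapsto[b_1+b_4,b_2+b_5,b_3+b_6]$ sends the shear coordinates of the mixed-spiral curve in Item 3 of Theorem~\ref{thm: shear} to $[-b,a,b-a]$ and sends the closed curve in Item 4 to $[-2b,2a,2b-2a]$ --- a scalar multiple --- so the contributions of mixed-spiral non-closed curves and of closed curves to $\hat\Xi$ land on the same torus ray and can cancel. A tangle placing weight $2$ on a mixed-spiral curve and weight $-1$ on the closed curve of the same slope would pass your test even though it is manifestly nontrivial and (one can check) not null. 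Appealing to ``separate reductions based at each puncture'' does not repair this, since the pairing $(14),(25),(36)$ of arcs of equal slope in a type-I triangulation is intrinsic to the triangulation rather than to a choice of basepoint, so those other reductions yield the same pair-sum constraint. What is missing is a mechanism to \emph{isolate} a single curve, which is exactly what Lemma~\ref{Farey_triple} plus Proposition~\ref{separation} provide.
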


To assist in the proof of Proposition~\ref{support_closed}, we quote two results.
The following proposition is \cite[Proposition 7.11]{unisurface}, specialized to the case of the four-punctured sphere.
\begin{proposition}\label{separation}
Let $\Xi$ be a null tangle in the four-punctured sphere.
Suppose for some tagged triangulation~$T$, for some tagged arc $\gamma$ in $T$, and for some curve $\lambda$ in $\Xi$ that $b_{\gamma}(T,\lambda)$ is strictly positive and that $b_{\gamma}(T,\nu)$ is nonpositive for every other curve $\nu\in \Xi$. Then~$w_{\lambda} =0$.
\end{proposition}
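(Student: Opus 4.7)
The plan is to recognize that this proposition is quoted verbatim from \cite[Proposition~7.11]{unisurface} as a specialization to the four-punctured sphere, so no new proof is strictly required: once one has checked that the present definitions of tagged arcs, allowable curves, shear coordinates, tangles, and null tangles agree with those in \cite{unisurface}, the cited result applies directly. Nonetheless, it is instructive to sketch a direct argument to see where the hypothesis is used.

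The direct strategy is by contrapositive: assuming $w_\lambda \neq 0$, I would produce a tagged triangulation $T^*$ for which $\b(T^*,\Xi) \neq \mathbf{0}$. The key tool is the piecewise-linear mutation formula governing how shear coordinates change under a flip. When $\gamma = \gamma_k \in T$ is flipped to obtain $T'$, each shear coordinate $b_{\gamma_j}(T',\nu)$ for $j \neq k$ is obtained from $b_{\gamma_j}(T,\nu)$ by adding a term whose sign and magnitude depend on $b_\gamma(T,\nu)$ and on the corresponding entry of the exchange matrix $B(T)$. The hypothesis singles out $\lambda$ as the unique curve in the support of $\Xi$ with $b_\gamma(T,\lambda)>0$, while every other $\nu$ in the support has $b_\gamma(T,\nu) \leq 0$. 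This sign asymmetry means that, for a suitably chosen arc $\gamma_j$ adjacent to $\gamma$ (namely, one for which the relevant entry of $B(T)$ is positive), the flip strictly increases $b_{\gamma_j}(T',\lambda)$ by a positive amount proportional to $b_\gamma(T,\lambda)$, while leaving $b_{\gamma_j}(T',\nu)$ unchanged for every other $\nu$ in the support.

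Iterating this construction carefully---flipping in a sequence designed to preserve or restore the isolation of $\lambda$'s sign at some distinguished arc---produces triangulations $T^{(n)}$ in which $b_{\gamma^*}(T^{(n)},\lambda)$ grows linearly in $n$ while $b_{\gamma^*}(T^{(n)},\nu)$ remains bounded for every other $\nu$ in the (finite) support of $\Xi$, as guaranteed by Definition~\ref{def tangle}. The $\gamma^*$-coordinate of $\b(T^{(n)},\Xi)$ is then $w_\lambda$ times an unbounded quantity plus a bounded remainder; this must vanish for every $n$, which forces $w_\lambda = 0$.

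The main obstacle is the bookkeeping after the first flip: the sign pattern at the new arc replacing $\gamma$ inverts, so the isolation hypothesis no longer holds there, and one must choose subsequent flips so that some arc continues to isolate $\lambda$ positively rather than allowing a previously nonpositive curve to ``cross over.'' This is precisely the bookkeeping done once and for all at the level of an arbitrary marked surface in \cite{unisurface}, so the cleanest route is simply to invoke the cited proposition and note that its hypotheses are met in the setting of the four-punctured sphere.
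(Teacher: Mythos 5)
Your proposal correctly identifies the paper's treatment: Proposition~\ref{separation} is stated without proof, simply as a specialization of \cite[Proposition~7.11]{unisurface} to the four-punctured sphere, which is precisely what you conclude. Your extra sketch of a direct flip-based argument captures the right intuition about sign isolation and growing coordinates, but as you yourself note, the iterative bookkeeping is nontrivial and is handled once and for all in \cite{unisurface}, so deferring to that citation is exactly the intended route.
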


The following lemma is an easy consequence of \cite[Lemma~6.2]{unitorus} (see the second paragraph of the proof of \cite[Proposition~6.1]{unitorus} for a detailed explanation).
\begin{lemma}\label{Farey_triple}
Given a finite set $M$ of slopes and a rational slope $f/e\in M$ in standard form, there exist rational slopes $b/a$ and $d/c$ (not necessarily in $M$) in standard form with the following properties:
First, $(b/a, d/c, f/e)$ is a Farey-1 triple, second, $b/a<d/c<f/e$, and third, there is no $q\in M$ with $b/a\le q<f/e$. 
\end{lemma}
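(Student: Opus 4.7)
The plan is to exhibit the triple explicitly using iterated mediants of Farey neighbors of $f/e$, then push the lower slope close enough to $f/e$ to avoid the finitely many elements of $M$ that lie below it.

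First I would dispose of the case $f/e = \infty$, i.e.\ $e=0$ and $f=1$. Any Farey-1 triple with $\infty$ as its largest slope must have the form $((n-1)/1,\, n/1,\, 1/0)$ for some integer $n$, since the Farey-1 condition with $1/0$ forces denominators to be $1$. Choosing $n$ larger than every finite element of $M$ gives the required triple.

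For finite $f/e$ (so $e>0$ and $\gcd(e,f)=1$), Bezout's identity produces integers $a_0,b_0$ with $a_0 f - b_0 e = 1$, and by adjusting $(a_0,b_0)$ by $(ke,kf)$ I may assume $a_0 > 0$. Then $b_0/a_0$ is in standard form, is a Farey-1 neighbor of $f/e$, and lies strictly below $f/e$. I would then define a sequence of mediants recursively by $(a_{k+1},b_{k+1}) = (a_k+e,\, b_k+f)$. A direct calculation gives $a_{k+1}f - b_{k+1}e = a_k f - b_k e = 1$, so every $b_k/a_k$ remains a Farey-1 neighbor of $f/e$ in standard form (standard form because $\gcd(a_k,b_k)$ divides $1$). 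Two more short calculations show
\[
\frac{b_{k+1}}{a_{k+1}} - \frac{b_k}{a_k} = \frac{fa_k - eb_k}{a_k a_{k+1}} = \frac{1}{a_k a_{k+1}} > 0,
\qquad
\frac{f}{e} - \frac{b_k}{a_k} = \frac{1}{e a_k} \xrightarrow[k\to\infty]{} 0,
\]
so the sequence $b_k/a_k$ increases strictly to $f/e$.

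Next I would verify that each consecutive triple $(b_k/a_k,\, b_{k+1}/a_{k+1},\, f/e)$ is a Farey-1 triple: the two "outer'' pairs are Farey-1 neighbors of $f/e$ by the recursion, and the inner pair satisfies $|a_{k+1}b_k - a_k b_{k+1}| = |eb_k - a_k f| = 1$. Finally, since $M$ is finite, $q^* := \max\{q \in M : q < f/e\}$ is well-defined (or $-\infty$ if no such $q$ exists), and the convergence of $b_k/a_k$ to $f/e$ from below guarantees a first index $k$ with $b_k/a_k > q^*$. Setting $b/a := b_k/a_k$ and $d/c := b_{k+1}/a_{k+1}$ yields the desired triple. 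There is no real obstacle here; the only point requiring care is the invariance of the identity $a_k f - b_k e = 1$ under the mediant recursion, which simultaneously delivers Farey-1 compatibility with $f/e$, the standard-form condition, and monotone convergence of $b_k/a_k$ to $f/e$.
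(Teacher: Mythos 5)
Your argument is correct in substance and is also genuinely self-contained, which distinguishes it from the paper: the paper simply cites [unitorus, Lemma~6.2] and the proof of [unitorus, Proposition~6.1] rather than reprove the fact here. Your construction is the standard one --- starting from a Bezout witness $a_0 f - b_0 e = 1$ and taking the arithmetic progression $(a_k,b_k)=(a_0+ke,\,b_0+kf)$, which parametrizes precisely the lower Farey neighbors of $f/e$ --- and you correctly verify that the invariant $a_k f - b_k e = 1$ at once gives the standard-form condition, all three Farey-1 pairings, and strict monotone convergence to $f/e$. The only blemish is a harmless off-by-one in the $\infty$ case: if you take $n$ merely larger than every finite element of $M$, then $n-1$ could equal an element of $M$, violating the third condition (which forbids $q\in M$ with $b/a\le q<f/e$, including $q=b/a$). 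Choosing $n$ with $n-1$ strictly greater than every finite element of $M$ fixes this. With that trivial amendment, the proof is sound.
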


To prove Proposition~\ref{support_closed}, we build a special type-I triangulation $\cT$ of $\Sp-4$ such that exactly one curve in $\Xi$ has strictly positive shear coordinates with respect to a particular arc in $\cT$, and appeal to Proposition~\ref{separation}.

\begin{proof}[Proof of Proposition~\ref{support_closed}]
Suppose that $\lambda$ is a non-closed curve in $\Xi$.
By Proposition~\ref{prop: allowable curves}, $\lambda$ is $\curve(e,f,E)$ for some rational slope with standard form $f/e$ and spirals clockwise or counterclockwise into its spiral points according to $E$.
We will show that the weight of $\lambda$ is zero. 
Let $M$ be the set of all rational slopes $s/r$ such that there is a curve in $\Xi$ with slope $s/r$.
Choose $b/a$ and $d/c$ so that $(b/a, d/c, f/e)$ has the properties listed in Lemma~\ref{Farey_triple}.
By Proposition~\ref{all tri}, the Farey-1 triple $(b/a,d/c,f/e)$ and a choice of tagging uniquely specifies a type-I triangulation $\mathcal{T}$ of $\Sp-4$.

In the triangulation $\cT$, let $\gamma_1$ and $\gamma_4$ denote the arcs with slope $d/c$, let $\gamma_2$ and $\gamma_5$ denote the arcs with slope $f/e$,  (with the endpoints of $\gamma_2$ coinciding with the spiral points of $\lambda$), and let $\gamma_3$ and $\gamma_6$ denote the arcs with slope $b/a$. 
For each endpoint $v$ where $\lambda$ spirals clockwise, we tag each of the arcs incident to $v$ notched at that endpoint.
We leave all other tags plain.
Since a notched tagging at the arcs incident to an endpoint of $\lambda$ changes the computation of the shear coordinate vector $\b(\cT,\lambda)$ by changing the spiral direction of $\lambda$, we proceed as though $\lambda$ spirals counterclockwise into both of its endpoints.
Figure~\ref{NT sphere fig} shows the triangulation $\cT$ and the arc $\lambda$ with counterclockwise spirals at both spiral points. 
\begin{figure}
\scalebox{1.3}{\includegraphics{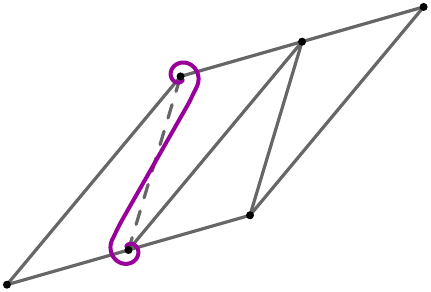}
\begin{picture}(0,0)(87, -12)
\definecolor{mypurple}{RGB}{153,0,153}
\put(17,42){\footnotesize\textcolor{mypurple}{$\lambda$}}
\put(-27,16){\footnotesize$\gamma_1$}
\put(1,18){\footnotesize$\gamma_2$}
\put(-21,-10){\footnotesize$\gamma_3$}
\end{picture}}
\caption{The triangulation $\cT$ from the proof of Proposition~\ref{support_closed}}
\label{NT sphere fig}
\end{figure}
By inspection of Figure~\ref{NT sphere fig}, we conclude that $b_{\gamma_2}(\mathcal{T},\lambda)=1$.
The Farey-1 condition implies that for any pair of slopes in $(b/a,d/c,f/e)$, the corresponding pair of integer lattice points forms a basis for the integer lattice.
In particular, the lift of~$\cT$ to $\R^2-\Z^2$, depicted in Figure~\ref{NT lift tri}, is merely a relabeling of the integer lattice (as depicted in Figure~\ref{lift tri}) according to this change of basis.
Thus, we may assume without loss of generality that $\cT=T_0$ (where $\gamma_2$ corresponds to the arc labeled 2 in $T_0$).
Next, we argue that every other curve in $\Xi$ has non-positive shear coordinate with respect to $\gamma_2$.

\begin{figure}
\includegraphics[scale=0.9]{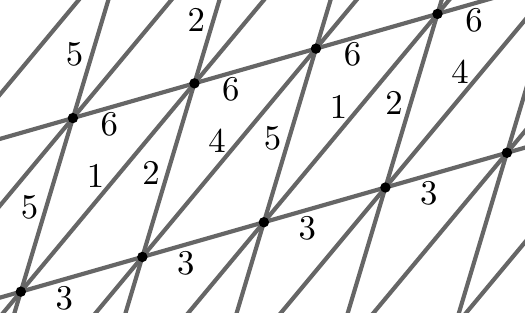}
\caption{The triangulation obtained by lifting $\cT$ (or $\cT'$) to the plane.}
\label{NT lift tri}
\end{figure}

Let $\lambda'$ be a curve in $\Xi$, distinct from $\lambda$, with rational slope $s/r$.
We claim that $b_{\gamma_2}(\cT,\lambda')$ is positive only if $b/a\le s/r\le f/e$.
Choose an isotopy representative of $\lambda'$ that minimizes the number of its intersections with the arc $\gamma_2$.
Recall from the proof of Theorem~\ref{thm: shear} that the shear coordinate vector of a curve with negative slope is obtained by applying either the  coordinate permutation $(123)(456)$ or $[(123)(456)]^2$ to the shear coordinate vector of a curve with nonnegative slope.
By Proposition~\ref{prop: positive shear}, we see that only the second permutation could yield a shear coordinate vector with $b_{\gamma_2}(\cT,\lambda')$ positive.
In that case, $[r,s]$ is in the image of the map which sends $[a,b]$ to $[a+b, -a]$, and thus $-1\le s/r\le 0$.
We conclude that $b_{\gamma_2}(\cT,\lambda')$ is positive only if $b/a\le s/r\le f/e$.

Because $(b/a, d/c, f/e)$ has the properties listed in Lemma~\ref{Farey_triple}, we may assume that $s/r=f/e$.
In this case, since $\lambda'$ does not equal $\lambda$, it is either closed or spirals clockwise into at least one of its spiral points.
If $\lambda'$ is closed or spirals clockwise into exactly one of its endpoints, then $b_{\gamma_2}(\mathcal{T},\lambda')=0$.
If $\lambda'$ spirals clockwise into both endpoints, then $b_{\gamma_3}(\mathcal{T},\lambda')=-1$. 
Thus $\lambda'$ has nonpositive shear coordinate with respect to $\gamma_2$.
By Proposition~\ref{separation}, we conclude that $w_{\lambda}=0$.
\end{proof}

To complete the proof of Theorem~\ref{sphere NTP}, we relate the shear coordinates of closed curves in $\Sp-4$ to the shear coordinates of closed curves in $\T-1$.
Suppose that $T'$ is a tagged triangulation of $\T-1$.
Since there is only one puncture, every taggable arc is a loop.
Thus an arc that is tagged notched at one endpoint must also be tagged notched at its other endpoint.
Moreover, if any single arc in $T'$ is tagged notched, then every other arc must also be  tagged notched at both endpoints.  
We will say that $T'$ is notched if all of its arcs are notched; similarly we say that $T'$ is plain if all of its arcs are marked plain.
The following proposition is \cite[Proposition~4.3]{unitorus}, and characterizes tagged triangulations in $\T-1$.
\begin{proposition}\label{torus:tri}
Each tagged triangulation $T'$ in the once-punctured torus is uniquely determined by a Farey-1 triple and a choice of plain or notched.
\end{proposition}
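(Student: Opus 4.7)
My plan is to combine Proposition~\ref{unitorus taggable arcs} with a short compatibility analysis and a count of arcs in an ideal triangulation. First I would observe that $\T-1$ has a single puncture $v_{00}$, so every taggable arc is a loop and, by Definition~\ref{def: tag}, must carry the same tagging at both ends. Any two distinct tagged arcs in $\T-1$ share $v_{00}$ as their only endpoint, so the compatibility rule in Definition~\ref{def: tag} forces every tagged triangulation of $\T-1$ to be uniformly tagged---either all plain or all notched. This accounts for the binary datum in the statement and reduces the problem to classifying maximal collections of pairwise compatible \emph{taggable} loops via their slopes.

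Next I would characterize compatibility of two distinct taggable loops with standard-form slopes $b/a$ and $d/c$ by using their lifts to $\R^2-\Z^2$: by Proposition~\ref{unitorus taggable arcs}, these consist of all $\Z^2$-translates of the line segments from $[0,0]$ to $[a,b]$ and from $[0,0]$ to $[c,d]$. Following the Pick's theorem argument from the proof of Proposition~\ref{prop: tagged compat}, I would examine the fundamental parallelogram with vertices $[0,0]$, $[a,b]$, $[a+c,b+d]$, $[c,d]$. Its boundary contains no lattice points besides its four corners (because $\gcd(a,b)=\gcd(c,d)=1$) and its area equals $|ad-bc|$, so Pick's theorem yields exactly $|ad-bc|-1$ interior lattice points. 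Using $\Z^2$-equivariance, the two lifted families can be arranged to meet only at integer points precisely when this count is zero, i.e., precisely when $|ad-bc|=1$. Hence distinct taggable loops are compatible exactly when their slopes form a Farey pair.

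Finally, a tagged triangulation of the once-punctured torus contains exactly $6g+3p-6=3$ arcs by \cite[Theorem~7.9]{fst}, so each tagged triangulation corresponds to three uniformly-tagged loops whose slopes are pairwise Farey-1, i.e., to a Farey-1 triple together with a plain/notched choice. Conversely, given any Farey-1 triple, the three corresponding straight-line loops cut $\T-1$ into two topological triangles; any additional compatible loop would have to enter the interior of one of these triangles and cross an existing arc, so the collection is already maximal. I expect the main obstacle to be the Pick's theorem step, where one must confirm that the ``no interior lattice points in the fundamental parallelogram'' condition truly rules out all essential crossings among the infinite $\Z^2$-equivariant families of lifts; once that is in hand, the proposition follows by assembling the three steps.
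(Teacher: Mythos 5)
The paper does not prove this proposition; it simply cites it as \cite[Proposition~4.3]{unitorus}, so there is no in-paper proof to compare against. Judged on its own, your reconstruction is fundamentally sound and is essentially the once-punctured-torus analogue of the paper's argument for Proposition~\ref{prop: tagged compat} in the four-punctured sphere: reduce to slopes via lifts to the punctured plane, use a fundamental-parallelogram/Pick's-theorem argument to show compatibility holds iff $|ad-bc|=1$, then invoke the arc count and maximality. Your step (1), that every tagged triangulation of $\T-1$ is uniformly tagged, is clean and correct: a loop must carry the same tag at both ends, so condition (1) of Definition~\ref{def: tag} can never hold for two tagged arcs with the same underlying loop, and condition (2) forces agreement at the single shared puncture. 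Step (3) is also fine. For step (2), you have correctly identified where the work lies: the forward direction (if $|ad-bc|=1$ then the straight-line lifts of the two families meet only at integer points) follows because the unique intersection of a line of slope $b/a$ through an integer point with a line of slope $d/c$ through an integer point has coordinates with denominator $|ad-bc|$, hence is integral; but the converse (that $|ad-bc|>1$ produces an \emph{unavoidable} crossing, not just a crossing of the straight-line representatives) needs the extra argument you flag. One way to close it is to note that the lifts of one arc are exactly the lines of slope $b/a$ through $\Z^2$ and likewise for the other, so these two pencils, under any isotopy commuting with $\Z^2$, must still separate the integer lattice into the same combinatorial strips, and an interior lattice point of the parallelogram is a lift of the puncture that must be crossed by any representative of one of the two arcs; alternatively, appeal directly to the classical fact that two closed curves on a torus of slopes $b/a$ and $d/c$ have minimal geometric intersection number $|ad-bc|$ and transfer it via the map $\kappa$. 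Either way, the overall plan is correct and fills the gap the paper leaves by external citation.
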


By Proposition~\ref{all tri}, we immediately obtain the following corollary.
\begin{corollary}\label{bij:torus_tri}
The map sending a tagged triangulations in $\T-1$ to the tagged triangulation in $\Sp-4$ with the same slopes and tagging is a bijection from the set of tagged triangulations in $\T-1$ to the set of type-\textrm{I} triangulations of $\Sp-4$ with either all tags plain or all tags notched.
\end{corollary}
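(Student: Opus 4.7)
The plan is to prove Corollary~\ref{bij:torus_tri} by showing that both sides of the asserted bijection are parametrized by the same combinatorial data, namely a Farey-1 triple together with a single global choice of plain or notched. Once both parametrizations are in hand, the fact that the map in the corollary respects the data will give a bijection essentially for free.

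First, I would invoke Proposition~\ref{torus:tri} to parametrize tagged triangulations of $\T-1$ by pairs $(\{p,q,r\}, \epsilon)$, where $\{p,q,r\}$ is a Farey-1 triple of rational slopes and $\epsilon\in\{\text{plain},\text{notched}\}$ records the common tagging of all arcs. Next, I would apply Proposition~\ref{all tri}, specifically row~\textrm{I} of Table~\ref{table:triangulations}, which shows that a type-\textrm{I} triangulation of $\Sp-4$ is uniquely determined by a Farey-1 triple $\{p,q,r\}$ together with an independent choice of tagging (plain or notched) at each of the four punctures. Restricting to those type-\textrm{I} triangulations in which all four vertex taggings agree---either all plain or all notched---reduces the parametrizing data to a Farey-1 triple $\{p,q,r\}$ together with a single binary choice $\epsilon$, exactly matching the parametrization on the $\T-1$ side.

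Now I would verify that the map in the corollary is precisely the one induced by identifying these two parametrizations. Given a tagged triangulation $T'$ of $\T-1$ determined by $(\{p,q,r\},\epsilon)$, the map sends $T'$ to the unique type-\textrm{I} triangulation of $\Sp-4$ with slope multiset $\{p,p,q,q,r,r\}$ (as displayed in row~\textrm{I} of Table~\ref{table:triangulations}) and with every puncture tagged $\epsilon$. This is well-defined, because Proposition~\ref{all tri} guarantees such a triangulation exists and is unique, and it lands in the subset of type-\textrm{I} triangulations with all tags agreeing. Because the map is literally the identification of parametrizing data on both sides, injectivity and surjectivity are immediate consequences of the uniqueness statements in Propositions~\ref{torus:tri} and~\ref{all tri}.

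There is no real obstacle here: the only point requiring any care is observing that the ``all arcs plain or all arcs notched'' condition on $\T-1$ (forced because every taggable arc is a loop, as noted just before Proposition~\ref{torus:tri}) corresponds exactly to the condition ``all four vertex taggings agree'' on the $\Sp-4$ side. Given that both parametrizations specify the same combinatorial data, the corollary follows.
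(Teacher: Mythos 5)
Your proof is correct and follows exactly the paper's (implicit) argument: the paper states the corollary as an immediate consequence of Propositions~\ref{torus:tri} and~\ref{all tri}, and you have simply spelled out why both sides are parametrized by the same data (a Farey-1 triple together with a single plain/notched choice). No difference in approach.
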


Let $p:\R^2-\Z^2\to \T-1$ denote the covering of $\T-1$ by the integer punctured plane.
Let $\overline{\curve}(a,b)$ denote the projection of a straight line in $\R^2-\Z^2$ with slope $b/a$ in standard form, containing no integer points. 
Let $T_0'$ denote the plain tagged triangulation of $\T-1$ associated with the Farey-1 triple $(-1,0,\infty)$, depicted in Figure~\ref{tri and mat torus}. 
The following proposition is a combination of \cite[Proposition~4.4]{unitorus}, and \cite[Proposition~5.1]{unitorus}.
The computation of shear coordinates for closed curves in $\T-1$ can be found in the proof of \cite[Proposition~5.1]{unitorus}.

\begin{proposition}\label{torus:shear}
\quad  
\begin{enumerate}
\item The curve $\overline{\curve}(a,b)$ is an allowable closed curve in the once punctured torus. 
Each allowable closed curve in the once punctured torus is of this form.
\item The shear coordinates  of $\overline{\curve}(a,b)$ with respect to $T_0'$ are obtained by a cyclic permutation of the vector $[-b,a,b-a]$.
\end{enumerate}
\end{proposition}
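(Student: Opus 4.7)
The plan is to prove both parts via the covering map $p:\R^2-\Z^2 \to \T-1$ by integer translations, using arguments parallel to (but simpler than) those already developed for $\Sp-4$.

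For part (1), I would first observe that any line $\ell$ of rational slope $b/a$ in standard form that passes through no integer point is disjoint from each of its integer translates: such translates are parallel to $\ell$, and the spacing between them is nonzero because $\gcd(a,b)=1$. Hence $\overline{\curve}(a,b) = p(\ell)$ is a simple closed curve in $\T-1$. It is non-contractible because its homology class in $H_1(\T)$ is $\pm[a,b] \neq 0$, and it avoids the puncture by construction, so it satisfies the conditions to be an allowable closed curve. Conversely, any allowable closed curve $\lambda$ has a well-defined homology class in $H_1(\T) = \Z^2$; let $b/a$ be the associated standard form of a rational slope. A straightening argument analogous to the one used in the proof of Proposition~\ref{isotopic = sigma isotopic} --- deforming lifts of $\lambda$ through $1 \times 1$ unit squares of the lifted triangulation to a monotonic polygonal path, then to the straight line --- shows that $\lambda$ is isotopic to $\overline{\curve}(a,b)$.

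For part (2), I would compute shear coordinates by lifting $\overline{\curve}(a,b)$ to the plane and counting contributions from each crossing with the lifted triangulation $\overline{T}_0'$. A key simplification comes from the $\Z/3$ rotational symmetry of $\overline{T}_0'$, realized by the linear map $[a,b] \mapsto [b, -a-b]$, which cyclically permutes the three arcs of $T_0'$ and their three corresponding slope ranges. Hence it suffices to treat a single range, say $0 \le b/a < \infty$; the remaining slope ranges then follow from the cyclic permutation of coordinates stated in the proposition. Within this range, I would choose a lift of $\overline{\curve}(a,b)$ that starts near $[0,0]$ and closes up near $[a,b]$, and record its passage through $\overline{T}_0'$ as a word in letters $\r$ (exit the current square through the right) and $\t$ (exit through the top), containing $a-1$ copies of $\r$, $b-1$ copies of $\t$, together with the letters encoding the initial and final crossings that close the curve. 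Each consecutive letter pair contributes $\pm 1$ to one coordinate of $\b(T_0', \overline{\curve}(a,b))$ following the local rules indicated in Figure~\ref{fig: shear} (specialized from six arcs to three). Summing these contributions yields $[-b, a, b-a]$, matching the finite-positive-slope case of Item 5 of Proposition~\ref{prop: positive shear} applied to $\curve(a,b)$ in $\Sp-4$: the six-coordinate vector $[-b,a,b-a,-b,a,b-a]$ simply doubles the three-coordinate vector for $\T-1$, reflecting the fact that $T_0$ is pulled back from $T_0'$ along the double cover $\T-4 \to \Sp-4$.

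The main obstacle is the combinatorial bookkeeping in part (2), but this is considerably easier than the analogous bookkeeping in the proof of Theorem~\ref{thm: shear}: with only one puncture and three arcs, there is no tagging data to track, no independent spiral directions, no distinct puncture labels, and no need to split cases on parities of $a$ and $b$. The arguments are essentially those of Proposition~4.4 and Proposition~5.1 of~\cite{unitorus}.
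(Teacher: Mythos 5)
In the paper this proposition is not proved but cited: it is introduced as ``a combination of \cite[Proposition~4.4]{unitorus} and \cite[Proposition~5.1]{unitorus},'' with the shear coordinate computation pointed to the proof of the latter. Your sketch reconstructs the cited arguments --- lifting to $\R^2-\Z^2$, classifying closed curves by slope via a straightening argument, and computing shear coordinates by word bookkeeping with a cyclic-symmetry reduction --- and you correctly acknowledge at the end that this is essentially \cite{unitorus}. The approach is sound and consistent with the citation and with the analogous computations carried out for $\Sp-4$ in Section~\ref{shear sec}.

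Two small points to tighten in part (1). Your homology argument implicitly uses that an allowable closed curve in $\T-1$ cannot be null-homologous; this should be spelled out: a null-homologous simple closed curve bounds a disk in $\T$, and that disk either misses the puncture (so the curve is contractible in $\T-1$) or contains it (so the curve is contractible to the puncture), both excluded by Definition~\ref{def: allow}. With that in place the homology class is primitive, giving a well-defined slope $b/a$. Also, a lift of a closed curve to $\R^2-\Z^2$ is a bi-infinite curve rather than a compact arc; ``closes up near $[a,b]$'' is best phrased as: a fundamental piece of the lift runs from a point to its integer translate by $[a,b]$, and the straightening is performed on that piece compatibly with its translates. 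Neither is a gap, only phrasing.
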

Fix a closed curve $\lambda = \curve(a,b)$ in $\Sp-4$ and closed curve $\lambda'=\overline{\curve}(a,b)$ in $\T-1$, and let $[x_1,x_2,x_3,x_4,x_5,x_6]$ denote the shear coordinate vector $\b(\lambda,T_0)$. 
Comparing the formula for $\b(\lambda,T_0)$ from Item 4 in Theorem~\ref{thm: shear} with the second item in Proposition~\ref{torus:shear}, we observe that $\b(\lambda',T_0') = [x_1,x_2,x_3]$.
We now extend this observation to all type-I triangulations.  
Fix a Farey-1 triple $(q_1,q_2,q_3)$, and let $\cT$ denote the tagged triangulation of $\Sp-4$ specified by $(q_1,q_2,q_3)$, with all arcs marked plain at every endpoint.
Denote the arcs in $\cT$ by $\gamma_1, \gamma_2, \ldots \gamma_6$, such that $\gamma_i$ and $\gamma_{i+3}$ are a Farey-0 pair with slope $q_i$, for $i=1,2,3$.
Let $\cT'$ denote the plain tagged triangulation of $\T-1$ associated with $(q_1,q_2,q_3)$.
Let $\{\gamma_i': i=1,2,3\}$ be the arcs in $\cT'$, with $\gamma_i'$ having slope $q_i$.
Consider the set $\mathcal{S}$ of all of the lifts of the arcs in $\cT$ to $\R^2-\Z^2$.
The set $\mathcal{S}$ consists of all integer translations of line segments with slopes $q_1$, $q_2$, and $q_3$, with integer endpoints.
This is exactly the set of all of the lifts (under $p$) of the arcs in $\cT'$ to $\R^2-\Z^2$.
As in the proof of Proposition~\ref{support_closed}, the triangulation of the plane that we obtain from $\mathcal{S}$ is a relabeling of the triangulation depicted in Figure~\ref{lift tri}, given by a suitable change of basis (see Figure~\ref{NT lift tri}).
Thus we can compute shear coordinates with respect to $\cT$ and $\cT'$ by the method used in the previous section.
The following proposition is now immediate.  

\begin{proposition}\label{sphere_to_torus}
Suppose that $\cT'=\{\gamma_1',\gamma_2',\gamma_3'\}$ and $\cT=\{\gamma_1,\ldots \gamma_6\}$ are tagged triangulations of $\T-1$ and $\Sp-4$ respectively, associated to the Farey-1 triple $(q_1,q_2,q_3)$, with arcs indexed as above.
Fix closed curves $\lambda'=\overline{\curve}(a,b)$ and $\lambda=\curve(a,b)$ in $\T-1$ and $\Sp-4$, respectively.
Then $b_{\gamma_i}(\mathcal{T},\lambda) = b_{\gamma_i+3}(\mathcal{T},\lambda) = b_{{\gamma_i'}}(\cT',\lambda')$ for $i \in \{1,2,3\}$.
\end{proposition}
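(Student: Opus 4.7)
The proposition is asserted to be immediate from the setup just before it, which has already established that the lifted triangulations of $\cT$ and $\cT'$ to $\R^2 - \Z^2$ coincide as a single planar triangulation $\mathcal{S}$, that a $GL(2, \Z)$ change of basis carries $\mathcal{S}$ to the standard triangulation $\overline{T}_0$ of Figure~\ref{lift tri}, and that the lifts of $\lambda = \curve(a,b)$ and $\lambda' = \overline{\curve}(a,b)$ are the same family of parallel slope-$b/a$ lines in $\R^2-\Z^2$. My plan is to unpack this into a two-step verification.

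First, I would reduce to the standard case. The Farey-1 condition on $(q_1,q_2,q_3)$ guarantees (as used in the proof of Proposition~\ref{support_closed}) that any two of the slope vectors $[a_i,b_i]$ (writing $q_i = b_i/a_i$ in standard form) form a $\Z$-basis of $\Z^2$, so there exists $M \in GL(2,\Z)$ carrying $(q_1,q_2,q_3)$ to the standard Farey-1 triple $(-1,0,\infty)$ index-by-index. Under $M$, the planar triangulation $\mathcal{S}$ becomes $\overline{T}_0$, the lift of $\lambda$ (resp.\ $\lambda'$) becomes the lift of $\curve(a',b')$ (resp.\ $\overline{\curve}(a',b')$) where $[a',b']$ is the standard form of $M\cdot[a,b]$, and the labelings $\gamma_i\leftrightarrow q_i$ in $\cT$ and $\gamma_i'\leftrightarrow q_i$ in $\cT'$ are transported to the analogous labelings in $T_0$ and $T_0'$. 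Because the word construction of Section~\ref{shear sec} depends only on the combinatorics of intersections of the lifted curve with $\mathcal{S}$, the shear coordinate vectors are $GL(2,\Z)$-covariant, so the proposition reduces to the standard case $\cT=T_0$, $\cT'=T_0'$.

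Second, in the standard case, the verification is a direct comparison of formulas. Proposition~\ref{prop: positive shear} (Item 5) gives
\[\b(T_0,\curve(a',b')) = [-b',\,a',\,b'-a',\,-b',\,a',\,b'-a'] \]
for nonnegative slopes, extended to negative slopes by the $Z = \br{(123)(456)}$ action of Theorem~\ref{thm: shear}. Proposition~\ref{torus:shear} (Item 2) gives $\b(T_0',\overline{\curve}(a',b')) = [-b',\,a',\,b'-a']$ up to cyclic permutation, where the cyclic permutation for negative slopes is precisely $(123)$, compatible with the restriction of $(123)(456)$ to the first three coordinates. Matching coordinates termwise yields $b_{\gamma_i}(T_0,\curve(a',b')) = b_{\gamma_{i+3}}(T_0,\curve(a',b')) = b_{\gamma_i'}(T_0',\overline{\curve}(a',b'))$ for $i=1,2,3$.

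The main subtlety is bookkeeping: one must check that the change of basis $M$ is compatible both with the matching $\gamma_i\leftrightarrow\gamma_i'$ by slope and with the pairing of Farey-0 partners $\gamma_i$ and $\gamma_{i+3}$ in $\cT$, and that the cyclic permutation arising for negative slopes in Proposition~\ref{torus:shear} agrees, via this correspondence, with the restriction of the $\Sp-4$ cyclic permutation to the first three (equivalently, the last three) coordinates. This indexing verification is routine, and once performed for one representative in each slope range, the full statement follows.
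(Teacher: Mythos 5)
Your proposal is correct and takes essentially the same approach as the paper's: both verify the coordinate identity directly for $T_0$ and $T_0'$ by matching the explicit shear-coordinate formulas (Item~4/5 of Theorem~\ref{thm: shear}/Proposition~\ref{prop: positive shear} against Item~2 of Proposition~\ref{torus:shear}), and both handle a general Farey-1 triple by observing that the common lifted planar triangulation $\mathcal{S}$ is a $GL(2,\Z)$-relabeling of $\overline{T}_0$. The paper presents the standard-case verification first and the change-of-basis reduction second, while you reverse the order, but the ingredients and reasoning are the same.
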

We apply Proposition~\ref{sphere_to_torus} and the Null Tangle Property for the once-punctured torus to complete the proof of Theorem~\ref{sphere NTP}.
\begin{proof}[Proof of Theorem~\ref{sphere NTP}] 
Let $\Xi$ be a null tangle in $\Sp-4$.
By Proposition~\ref{support_closed}, we may assume that the support of $\Xi$ contains only closed curves.
Define a tangle $\Xi'$ in $\T-1$ consisting of the curves $\overline{\curve}(a,b)$ such that $\curve(a,b)$ is in $\Xi$, and such that the weight of $\overline{\curve}(a,b)$ in $\Xi'$ is the same as the weight of $\curve(a,b)$ in $\Xi$.
Suppose that $\cT'$ is any triangulation of $\T-1$, associated with the Farey-1 triple $(q_1,q_2,q_3)$. 
Let $\cT$ be a type-\textrm{I} triangulation of $\Sp-4$ associated with the same Farey-1 triple, tagged arbitrarily, with each arc indexed as in Proposition~\ref{sphere_to_torus}.
Fix a closed curve $\lambda'=\overline{\curve}(a,b)$ in $\Xi'$ and its corresponding curve $\lambda = \curve(a,b)$ in $\Xi$.
By Proposition~\ref{sphere_to_torus}, the shear coordinate vector $\b(\cT',\lambda')$ is  obtained from $\b(\cT,\lambda)$ by the projection that maps the vector $[b_1,b_2,b_3,b_4,b_5,b_6]\in\reals^6$ to $[b_1,b_2,b_3]\in\reals^3$.
Thus, $\Xi'$ is a null tangle because $\Xi$ is a null tangle.
Since $\T-1$ is known to have the Null Tangle Property \cite[Theorem 3.2]{unitorus}, $w_{\lambda'}$ is zero.
Since $w_\lambda$ is equal to $w_{\lambda'}$, we conclude that $\Xi$ is trivial.
\end{proof}

With the Null Tangle Property established, Theorem~\ref{Null Tangle consequences} says that the set of all shear coordinate vectors with respect to $T_0$ constitutes universal geometric coefficients for $B(T_0)$ over $\Z$ and $\Q$. 
Thus Theorem~\ref{unisphere thm} follows from Theorem~\ref{thm: shear} and Proposition~\ref{same list}.

\section{The mutation fan}\label{global sec} 
Proposition~\ref{prop: maximal curves} and the ensuing discussion in Section~\ref{curve sec} provide significant information about the rational quasi-lamination fan $\F_\rationals(T_0)$ and thus, by Theorem~\ref{Null Tangle consequences}, the mutation fan $\F_{B(T_0)}$.
However, there is still an important question about each fan that we cannot answer.
First, we don't know the support $|\F_\rationals(T_0)|$ of the rational quasi-lamination fan (the union of all of the cones in $\F_\rationals(T_0)$).
Second, while we know the rational part of $\F_{B(T_0)}$, we do not know all of the cones in $\F_{B(T_0)}$.
In this section, we conjecture answers to these questions.
Furthermore, we make a conjecture on real universal geometric coefficients that would follow from the conjectures about the fans.

\begin{conjecture}\label{support}
For $T_0$ as shown in Figure~\ref{tri and mat}, the complement $\reals^6\setminus|\F_\rationals(T_0)|$ of the support of $\F_\rationals(T_0)$ is the union of the relative interiors of all irrational rays in the plane $P$ given by the equations $x_1+x_2+x_3=0$, $x_1=x_4$, $x_2=x_5$, and $x_3=x_6$.
\end{conjecture}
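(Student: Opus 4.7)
Plan: I would split the conjecture into two directions: (A) every irrational ray in $P$ lies outside $|\F_\rationals(T_0)|$, and (B) every point of $\reals^6 \setminus P$ lies in $|\F_\rationals(T_0)|$.

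For direction (A), the strategy is to show that every cone of $\F_\rationals(T_0)$ meets $P$ only at the origin or along a rational ray. The type-VII cones are $5$-dimensional, generated by one closed-curve shear vector (which lies in $P$) together with four non-closed shear vectors of the same slope (which do not); linear independence of these five generators forces the cone's intersection with $P$ to be exactly the closed curve's rational ray. For the $6$-dimensional cones of types I--VI, none of the six generators lies in $P$, and no closed-curve ray can be a face of such a cone, since (by the discussion following Proposition~\ref{prop: maximal curves}) closed-curve rays are $1$-dimensional faces only of type-VII cones. What remains is to show that these $6$-dim cones meet $P$ only at the origin. My plan is to exhibit, for each combinatorial type in Table~\ref{table:triangulations}, a linear functional $\ell$ which vanishes on $P$ (i.e., $\ell$ is a combination of $x_1-x_4$, $x_2-x_5$, $x_3-x_6$, and $x_1+x_2+x_3$) and satisfies $\ell(u) > 0$ for the shear coordinate $u$ of each of the six generating non-closed curves; then any nonnegative combination lying in $P = \{\ell = 0\}$ must be trivial.

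The main obstacle in (A) is producing these functionals. The explicit shear coordinate formulas of Theorem~\ref{thm: shear}, together with the symmetry under $G = \br{(14),\,(25),\,(36),\,(123)(456)}$ which preserves $P$ setwise, should reduce the problem to a finite set of inequalities parameterized by the Farey data defining each triangulation type. I expect the type-I and type-VI cases, built from Farey-1 triples, to be most delicate, since their slope data can be chosen arbitrarily close to the closed-curve directions where $P$ is approached in the limit.

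For direction (B), the plan is to prove $|\F_\rationals(T_0)|$ is closed as a subset of $\reals^6 \setminus P$; since the support contains $\Q^6$ by Theorem~\ref{Null Tangle consequences} and $\Q^6 \cap (\reals^6 \setminus P)$ is dense in $\reals^6 \setminus P$, this closedness will imply the desired containment. To prove closedness, I would establish local finiteness: every compact $K \subset \reals^6 \setminus P$ at positive distance from $P$ meets only finitely many cones of $\F_\rationals(T_0)$. The cones are indexed by finite combinatorial data (tagged triangulations in Table~\ref{table:triangulations}, or the maximal compatible collections containing a closed curve in Table~\ref{table:curves}), themselves parameterized by Farey-1 triples or Farey-2 pairs together with endpoint and tagging data. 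These Farey families become arbitrarily fine precisely as their slopes tend to a common irrational limit. Quantitatively controlling this degeneration — bounding the ``angular spread'' of each cone in terms of its distance from $P$ — is the principal obstacle and, I believe, the technical heart of the conjecture.
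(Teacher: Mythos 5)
This statement is Conjecture~\ref{support}, not a theorem of the paper. Section~\ref{global sec} opens by explicitly flagging it as an open question: the authors write that ``we don't know the support $|\F_\rationals(T_0)|$ of the rational quasi-lamination fan,'' and the only thing they prove in that section is the conditional Theorem~\ref{two conjs}. There is no proof of Conjecture~\ref{support} in the paper to compare your sketch against.

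Taking your proposal on its own terms: the overall decomposition into (A) ``irrational rays of $P$ avoid the support'' and (B) ``everything off $P$ is in the support'' is the right way to split the claim, and your identification of the genuine obstacles is accurate --- they are exactly the obstacles the authors could not overcome. Two remarks. First, in the type-VII step, linear independence of the five generators alone does not force the cone to meet $P$ only along the closed-curve ray. If $v_1\in P$ and $v_2,\dots,v_5\notin P$ span a $4$-dimensional space, a nonnegative combination $\sum_{i\ge 2} c_i v_i$ could still land in $P$ unless $P\cap\operatorname{span}(v_2,\dots,v_5)=\{0\}$, equivalently unless the images of $v_2,\dots,v_5$ in $\reals^6/P$ (which is $4$-dimensional) are linearly independent. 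That is an additional computation you would need to carry out using the explicit formulas of Theorem~\ref{thm: shear}. Second, for direction (A), your proposed method --- finding a functional $\ell$ vanishing on $P$ and strictly positive on all six generators of each type-I through type-VI cone --- is equivalent to the conclusion you want (such $\ell$ exists iff the cone meets $P$ only at the origin), so producing these functionals is not a reduction but the whole problem; fortunately you say as much. Direction (B) as you outline it (local finiteness of the fan away from $P$, hence closedness of the support, hence density of $\Q^6$ forces full coverage off $P$) is logically sound if local finiteness can be shown, but quantitatively bounding how cones degenerate as their defining Farey data approaches an irrational limit is precisely where the difficulty lies and remains unresolved. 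In short, the plan is a sensible attack but not a proof, and the paper itself leaves the question open.
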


\begin{conjecture}\label{irrat rays}
The mutation fan $\F_B$, for $B=B(T_0)$ as shown in Figure~\ref{tri and mat}, is the union of $\F_\rationals(T_0)$ with the set of all irrational rays in the plane $P$ defined in Conjecture~\ref{support}.
\end{conjecture}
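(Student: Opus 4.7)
The plan is to reduce Conjecture~\ref{irrat rays} to Conjecture~\ref{support} by a fan-theoretic argument, putting essentially all the weight onto a proof of Conjecture~\ref{support}.

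Granted Conjecture~\ref{support}, the complement of $|\F_\rationals(T_0)|$ in $\reals^6$ consists exactly of the relative interiors of the irrational rays in $P$. Let $C$ be any cone of $\F_{B(T_0)}$ not already in $\F_\rationals(T_0)$. Theorem~\ref{Null Tangle consequences} and the characterization of $\F_\rationals(T_0)$ as the rational part of $\F_{B(T_0)}$ yield that $C$ is irrational, not full-dimensional, and has $\mathrm{relint}(C)$ disjoint from $|\F_\rationals(T_0)|$. Hence $\mathrm{relint}(C)$ is contained in the pairwise disjoint union $\bigsqcup_\rho \mathrm{relint}(\rho)$ taken over irrational rays $\rho$ of $P$, and by connectedness $\mathrm{relint}(C) \subset \mathrm{relint}(\rho)$ for a single $\rho$, forcing $C \subset \overline{\rho} = \rho$, hence $C = \rho$. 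Conversely, completeness of $\F_{B(T_0)}$ places every irrational ray $\rho \subset P$ inside some cone of $\F_{B(T_0)}$, and the preceding argument forces that cone to coincide with $\rho$. Modulo Conjecture~\ref{support}, the cones of $\F_{B(T_0)}$ are therefore exactly those of $\F_\rationals(T_0)$ together with the irrational rays in $P$.

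All that remains is the proof of Conjecture~\ref{support}. The plan is to describe $|\F_\rationals(T_0)|$ explicitly using Proposition~\ref{prop: maximal curves} and the shear coordinate formulas of Theorem~\ref{thm: shear}. The $6$-dimensional cones of types I--VI should collectively cover $\reals^6 \setminus P$, while the $5$-dimensional type-VII cones all lie inside $P$ and together exhaust the rational directions of $P$. The inclusion of each type-VII cone in $P$ follows because its generators (one closed-curve ray of the form $[-b, a, b-a, -b, a, b-a]$ together with four non-closed curve rays prescribed by Table~\ref{table:curves}) can be checked directly against the defining equations of $P$, and the density of rational slopes then produces every rational direction of $P$ as some closed-curve ray.

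The main obstacle is the converse: showing that no sequence of type-I--VI full-dimensional cones accumulates at an irrational direction of $P$. My approach is to fix an irrational $s \in P$ and, using the floor-function form of the shear coordinates together with the Farey-triple parametrization of each tagged triangulation from Table~\ref{table:triangulations}, to bound from below the distance in $\reals^6$ from $s$ to the closure of every type-I--VI maximal cone. The delicate point is keeping this bound uniform as the Farey data approximates $s$; I would model the argument on the analogous analysis for the once-punctured torus in \cite{unitorus}, where a similar irrational gap in the quasi-lamination fan was verified directly from the structure of Farey-1 triples. A subsidiary step, once Conjecture~\ref{support} is in hand, is to verify that the fan-theoretic reduction in the first paragraph is rigorous---in particular, that every cone of $\F_\rationals(T_0)$ is in fact a cone of $\F_{B(T_0)}$, not merely contained in one---which should follow from a case analysis of Proposition~\ref{prop: maximal curves} combined with the codimension-1 statement in Theorem~\ref{Null Tangle consequences}.
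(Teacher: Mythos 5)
The statement you are asked to prove is a \emph{conjecture}, and the paper does not prove it. What the paper establishes is only the equivalence of Conjecture~\ref{irrat rays} with Conjecture~\ref{support} (``Conjectures~\ref{support} and~\ref{irrat rays} are easily seen to be equivalent\ldots''), and the paper then explicitly states, ``There is not much evidence for Conjecture~\ref{support},'' before offering a single piece of indirect evidence (Proposition~\ref{induced}). Your first paragraph is, in substance, the same equivalence argument the paper gives: granting Conjecture~\ref{support}, the unique complete fan refining nothing and containing $\F_\rationals(T_0)$ together with the stranded irrational rays is the fan in Conjecture~\ref{irrat rays}. That part matches the paper. But it is a reduction, not a proof of the conjecture.

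The remainder of your proposal attempts to prove Conjecture~\ref{support}, which the paper leaves open, and here there are concrete problems. First, the type-VII cones are $5$-dimensional while the plane $P$ is $2$-dimensional (it is cut out by four independent linear equations in $\reals^6$), so the claim that ``the $5$-dimensional type-VII cones all lie inside $P$'' is a dimension error. What is true is that each type-VII cone has exactly one extreme ray in $P$ (the ray spanned by the shear coordinates $[-b,a,b-a,-b,a,b-a]$ of a closed curve); the other four generators of such a cone are shear vectors of non-closed curves, which do not lie in $P$. Second, the proposed strategy of establishing a \emph{uniform} lower bound on the distance from an irrational $s\in P$ to the closures of the full-dimensional cones is impossible if the conjecture is true: the full-dimensional cones of $\F_\rationals(T_0)$ accumulate on the irrational rays of $P$ (as the Farey data of the parametrizing triangulations converges to the irrational slope), so those distances tend to $0$. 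What must actually be shown is that $s$ lies in no individual cone, which is a pointwise statement that cannot be reduced to a uniform estimate. Finally, the analogy to \cite{unitorus} is not as direct as you suggest: the once-punctured-torus case is resolved there via the Real Null Tangle Property (\cite[Theorem~8.1]{unitorus}), and the present paper uses exactly that route to prove the \emph{conditional} Theorem~\ref{two conjs}, but stops short of proving Conjecture~\ref{support}/\ref{irrat rays} itself because the torus argument does not transfer directly. As written, your proposal should be restructured to make clear that everything past the first paragraph is an unproven program, with the specific gaps above flagged.
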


Conjectures~\ref{support} and~\ref{irrat rays} are easily seen to be equivalent.
Indeed, given Conjecture~\ref{support}, the unique complete fan containing $\F_\rationals(T_0)$ is the fan described in Conjecture~\ref{irrat rays}.
Conversely, if Conjecture~\ref{irrat rays} holds, then by Theorem~\ref{Null Tangle consequences}, the only irrational cones in $\F_B$ are the irrational rays in the plane $P$. 

Theorem~\ref{two conjs}, below, states that Conjecture~\ref{irrat rays} is the last piece needed to prove the following conjecture.

\begin{conjecture}\label{univ real}
For $B=B(T_0)$ as shown in Figure~\ref{tri and mat}, a complete list of universal geometric coefficients for $B$ \emph{over $\reals$} consists of the vectors described in Items $1$--$3$ of Theorem~\ref{unisphere thm} together with exactly one nonzero vector in $\rho$ for each ray $\rho$ contained in the plane $P$ given by $x_1+x_2+x_3=0$, $x_1=x_4$, $x_2=x_5$, and $x_3=x_6$.
\end{conjecture}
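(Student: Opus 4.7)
The plan is to reduce Conjecture~\ref{univ real} to Conjecture~\ref{irrat rays} via the general theory of \cite{universal}, which identifies universal geometric coefficients over $\reals$ with a positive $\reals$-basis for mutation-linear algebra. In well-behaved cases, such a basis is obtained by selecting one nonzero vector from each ray of the mutation fan $\F_B$. Assuming Conjecture~\ref{irrat rays}, the task thus reduces to enumerating the rays of $\F_B$ and matching the result against Conjecture~\ref{univ real}.

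Under Conjecture~\ref{irrat rays}, the rays of $\F_B$ are exactly the rays of $\F_\rationals(T_0)$ together with every irrational ray lying in the plane $P$. By Definition~\ref{rat q-lam fan def}, each ray of $\F_\rationals(T_0)$ has the form $C_{\{\lambda\}} = \reals_{\ge 0}\cdot\b(T_0,\lambda)$ for some allowable curve $\lambda$, and no two distinct allowable curves can span the same ray: if $\b(T_0,\lambda_1)$ and $\b(T_0,\lambda_2)$ were positive scalar multiples with $\lambda_1\ne\lambda_2$, then suitable positive-integer weightings of $\lambda_1$ and $\lambda_2$ would yield two distinct integral quasi-laminations with identical shear coordinates, contradicting the bijection of Theorem~\ref{q-lam bij}. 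Thus the rays of $\F_\rationals(T_0)$ are in bijection with the allowable curves.

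By Theorem~\ref{thm: shear}, these rays split into those spanned by shear coordinates of non-closed allowable curves (exactly the vectors listed in Items 1--3 of Theorem~\ref{unisphere thm}) and those spanned by shear coordinates of closed allowable curves. By Item 4 of Theorem~\ref{unisphere thm} together with the remark following it, the latter are exactly the rational rays contained in $P$. Combined with the irrational rays in $P$ provided by Conjecture~\ref{irrat rays}, this accounts for every ray in $P$. Hence choosing one nonzero representative from each ray of $\F_B$ yields precisely the collection described in Conjecture~\ref{univ real}.

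The principal obstacle, aside from Conjecture~\ref{irrat rays} itself, is securing the general principle that real universal coefficients correspond to one vector per ray of $\F_B$ in our specific setting. The rational cones of $\F_\rationals(T_0)$ are simplicial by construction, and each of the uncountably many rays in $P$ is a one-dimensional cone, hence trivially simplicial. The delicate point is showing that the collective behavior of $\F_B$ on $P$---a two-dimensional subspace subdivided into all its rays with no higher-dimensional cone---still admits a positive basis in the sense of \cite{universal}. The expectation is that this reduces to a direct argument on $P$ alone, since the restriction of mutation-linear relations to $P$ should decompose any vector in $P$ uniquely as a finite nonnegative combination of ray representatives in the two-dimensional plane.
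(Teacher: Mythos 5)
Your proposal parallels the paper's Theorem~\ref{two conjs} in that both reduce Conjecture~\ref{univ real} to Conjecture~\ref{irrat rays}, and your enumeration of the rays of $\F_B$ (rational rays are shear-coordinate rays of allowable curves, pairwise distinct by the Theorem~\ref{q-lam bij} argument; irrational rays lie in $P$) is correct as far as it goes. However, you then invoke the principle that ``a positive $\reals$-basis is obtained by selecting one nonzero vector from each ray of the mutation fan,'' and this is precisely the step where your argument has a genuine gap that you yourself flag but do not close.

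The ``one vector per ray'' heuristic does not apply automatically here, and you cannot dispatch the $2$-plane $P$ by a ``direct argument on $P$ alone.'' The restriction of $\F_B$ to $P$ (under Conjecture~\ref{irrat rays}) consists of uncountably many rays with no two-dimensional cone among them. Your claim that ``the restriction of mutation-linear relations to $P$ should decompose any vector in $P$ uniquely as a finite nonnegative combination of ray representatives'' is not self-evident: linear-algebraically a vector in $P$ admits infinitely many such decompositions, so any uniqueness must be extracted from the mutation-linear structure, and you do not supply that extraction. This is exactly the content that needs proof, not a reduction step. The paper avoids the issue entirely by taking a different route: it extends shear coordinates to curves of irrational slope via a normalization, defines \emph{real tangles} and a \emph{Real Null Tangle Property}, shows (by the argument in \cite[Section~8]{unitorus}) that the Real Null Tangle Property suffices to establish the claimed list as universal over $\reals$, and then establishes the Real Null Tangle Property for $\Sp-4$ by first eliminating spiraling curves (reusing Proposition~\ref{support_closed}) and then projecting to $\T-1$ and invoking \cite[Theorem~8.1]{unitorus}. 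Unless you replace your final heuristic with an argument of comparable force---either by proving the $\reals$-independence directly on $P$ via something like the torus's real null tangle mechanism, or by citing and verifying the hypotheses of a specific cone-basis theorem in \cite{universal} that is actually applicable to a fan with a plane's worth of rays---your proof is incomplete at the critical point.
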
Equivalently, universal geometric coefficients for $B$ over $\reals$ consist of all of the vectors described in Theorem~\ref{unisphere thm} together with exactly one nonzero vector in $\rho$ for each \emph{irrational} ray $\rho$ contained in $P$.

\begin{theorem}\label{two conjs}
If Conjecture~\ref{irrat rays} holds, then Conjecture~\ref{univ real} holds also.
\end{theorem}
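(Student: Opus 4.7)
The plan is to reduce Theorem~\ref{two conjs} to the general principle that, under the Null Tangle Property, universal geometric coefficients over $\reals$ for $B=B(T_0)$ can be read off from the rays of the mutation fan $\F_B$, one nonzero vector per ray. Conjecture~\ref{irrat rays} will then let us enumerate these rays explicitly and match them against the list in Conjecture~\ref{univ real}.

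First I would extend the $\integers/\rationals$-statement already established in Theorem~\ref{unisphere thm} to $\reals$ using the $\reals$-analog of \cite[Theorem~4.4]{universal}. By Theorem~\ref{sphere NTP} the four-punctured sphere has the Null Tangle Property, so by Theorem~\ref{Null Tangle consequences} the rational part of $\F_B$ coincides with $\F_\rationals(T_0)$ and shear coordinates of allowable curves give universal coefficients over $\integers$ and $\rationals$. I would extract, or reconfirm by replaying the proofs of \cite[Section~6 and Section~8]{universal} with $R=\reals$, the corresponding statement: under the Null Tangle Property, universal geometric coefficients over $\reals$ for $B$ are obtained by choosing one nonzero vector in each ray of $\F_B$. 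The subtlety here is that $\reals$-coefficient specialization is governed by a formal-power-series topology on $\PP$, but the argument in \cite{universal} reduces the specialization test to a ray-by-ray check that transfers without essential change.

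Next I would invoke Conjecture~\ref{irrat rays} to enumerate the rays of $\F_B$. Under that conjecture, $\F_B$ consists of the cones of $\F_\rationals(T_0)$ together with all irrational rays in the plane $P$. By Definition~\ref{rat q-lam fan def}, every ray of $\F_\rationals(T_0)$ is generated by a shear coordinate $\b(T_0,\lambda)$ of some allowable curve, and by Theorem~\ref{thm: shear} these are exactly the vectors listed in Items 1--4 of Theorem~\ref{unisphere thm}. A direct case analysis of the floor-function expressions shows that no vector from Items 1--3 lies in $P$: the Item 1 vectors have coordinate sum equal to $1$ rather than $0$, and for Items 2--3 the pairwise-equality conditions $x_1=x_4$, $x_2=x_5$, $x_3=x_6$ force incompatible parity constraints on $a$ and $b$ given $\gcd(a,b)=1$. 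Meanwhile, as recorded in the paragraph following Theorem~\ref{unisphere thm}, the vectors in Item 4 are exactly the primitive integer vectors in $P$ and so span all rational rays in $P$.

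Combining these two steps, every ray of $\F_B$ is either an off-$P$ ray generated by an Items 1--3 vector, or a ray in $P$ (rational, generated by an Item 4 vector, or irrational). Selecting one nonzero representative per ray then reproduces exactly the list in Conjecture~\ref{univ real}. The main obstacle is the first step: making the $\reals$-coefficients correspondence between rays of $\F_B$ and universal coefficients rigorous in our setting requires care, because the rays inside $P$ accumulate into a dense one-parameter family and the resulting fan is far from simplicial along $P$, so the spanning and linear-independence arguments in \cite{universal} must be checked to continue to apply in this nonsimplicial situation.
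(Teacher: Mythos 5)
Your decomposition has the right high-level shape, but the first step is not a theorem and cannot be obtained by ``replaying the proofs of \cite[Section~6 and Section~8]{universal} with $R=\reals$.'' The statement ``under the Null Tangle Property, universal geometric coefficients over $\reals$ are obtained by choosing one nonzero vector in each ray of $\F_B$'' is not established anywhere; Theorem~\ref{Null Tangle consequences} gives the shear-coordinate characterization of universal coefficients only over $\integers$ and $\rationals$. You correctly flag at the end that closing this step ``requires care,'' but that caveat is where the entire content of the argument lives --- identifying an obstacle is not overcoming it.

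The paper's proof does precisely the work you are waving at. It is not a ray-by-ray check that ``transfers without essential change''; it requires a genuinely new device. The paper extends the notion of allowable curve to \emph{irrational} slopes (projecting real lines $\lambda(\sigma)$ with $\sigma\notin\rationals$ to $\Sp-4$), defines \emph{normalized shear coordinates} for those dense curves as a limit along the traversal, shows these normalized shear coordinates span the irrational rays in the plane $P$, defines a \emph{real tangle} admitting both rational and irrational curves with arbitrary real weights, and then reduces Conjecture~\ref{univ real} (given Conjecture~\ref{irrat rays}) to a \emph{Real Null Tangle Property}: every real null tangle is trivial. This Real Null Tangle Property is then proved by first eliminating spiral curves exactly as in the proof of Proposition~\ref{support_closed}, and then reusing the proof of Theorem~\ref{sphere NTP} with the Real Null Tangle Property of $\T-1$ (\cite[Theorem~8.1]{unitorus}) in place of the ordinary one. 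The paper even warns that ``we have no definition of a real tangle in a general surface,'' which signals that the extension to $\reals$ is specific to this setting and not a formal consequence of the $\rationals$-theory. Your second and third steps --- enumerating rays via Conjecture~\ref{irrat rays} and matching them (Item 1 has coordinate sum $1$, Item 4 gives the primitive integer vectors in $P$, etc.) --- are fine as sanity checks, but without the Real Null Tangle Property there is no argument that this enumeration produces a \emph{universal} collection over $\reals$ (neither spanning nor linear independence in the mutation-linear sense has been shown). So yes, there is a genuine gap: the claimed general principle must be replaced by the construction of real tangles and the Real Null Tangle Property, which you did not supply.
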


\begin{proof}[Proof sketch]
Suppose Conjecture~\ref{irrat rays} holds.
The proof that Conjecture~\ref{univ real} holds is essentially a straightforward modification of the proof (in \cite[Section~8]{unitorus}) of the part of \cite[Theorem~1.1]{unitorus} that is analogous to Conjecture~\ref{univ real}.
However, we can shortcut the proof by actually \emph{using}, rather than modifying, the key subsidiary result in \cite{unitorus}.

We extend the collection of allowable curves to allow curves that are dense in $\Sp-4$, parametrized by irrational slopes.
Specifically, for each positive slope $\sigma$, choose a line in $\reals^2 - \integers^2$ having slope $\sigma$ and let $\lambda(\sigma)$ be the projection of that line to $\Sp-4$.
If $\sigma$ is rational, then $\lambda(\sigma)$ is a closed allowable curve in the usual sense.
In general, we can define \newword{normalized shear coordinates} for any $\lambda(\sigma)$ with $\sigma>0$
by starting at the projection of the point on the line with vertical coordinate zero and traversing the line a distance $d$ in the direction of increasing vertical coordinate.
Given a tagged triangulation of $\Sp-4$, we compute shear coordinates as we traverse $\lambda(\sigma)$.
The normalized shear coordinates are the limit, as $d\to\infty$, of the shear coordinates scaled to unit vectors.
Arguing as in \cite[Section~8]{unitorus}, we conclude that the normalized shear coordinates of $\lambda(\sigma)$ is the unit vector in the direction of $[-\sigma,1,\sigma-1,-\sigma,1,\sigma-1]$.
In particular, Proposition~\ref{sphere_to_torus} extends to curves with irrational slope $\sigma$.
We define a \newword{real tangle} in $\Sp-4$ to be a finite collection of allowable curves and/or irrational curves of the form $\lambda(\sigma)$ with arbitrary real weights.
A real tangle is \newword{null} if the weighted sum of its shear coordinates (using \emph{normalized} shear coordinates for irrational curves) is zero with respect to any tagged triangulation.
By the same argument as in \cite[Section~8]{unitorus}, it is enough to establish the Real Null Tangle Property:  that every real null tangle is trivial.

The Real Null Tangle Property can be established by first showing, using exactly the argument given here for ordinary (rational) tangles, that a real null tangle has no curves with spirals.
To show that a real null tangle without spirals is trivial, we can re-use the proof of Theorem~\ref{sphere NTP}, substituting the Real Null Tangle Property of $\T-1$, proved as \cite[Theorem~8.1]{unitorus}, for the Null Tangle Property of $\T-1$.
\end{proof}
Although the proof of Theorem~\ref{two conjs} involved establishing a ``Real Null Tangle Property'' for the four-punctured sphere, we emphasize, as was emphasized in \cite{unitorus}, that we have no definition of a real tangle in a general surface.

There is not much evidence for Conjecture~\ref{support}, beyond the fact that Theorem~\ref{two conjs} holds.
We conclude by offering one more piece of indirect evidence.

Given any linear subspace $U$ of $\reals^6$, the mutation fan $\F_B$ induces a fan structure $(\F_B)_U=\set{C\cap U:C\in \F_B}$ on $U$.

\begin{proposition}\label{induced}
Let $B=B(T_0)$ be as shown in Figure~\ref{tri and mat} and let $B'=B(T'_0)$ be as shown in Figure~\ref{tri and mat torus}.   
Let $U$ be the subspace of $\reals^6$ given by the equations $x_1=x_4$, $x_2=x_5$, and $x_3=x_6$.
The map $r$ that maps the vector $[b_1,b_2,b_3,b_1,b_2,b_3]\in\reals^6$ to $[b_1,b_2,b_3]\in\reals^3$ restricts to an isomorphism from the induced fan $(\F_B)_U$ to the mutation fan $\F_{B'}$.
\end{proposition}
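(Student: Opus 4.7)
The plan is to establish the fan isomorphism via an explicit bijection on maximal cones, leveraging the correspondence (Corollary~\ref{bij:torus_tri}) between tagged triangulations of $\T-1$ and type-I triangulations of $\Sp-4$ with uniform taggings. First I would observe that $r: U\to\reals^3$ is a linear isomorphism with inverse $s(b_1,b_2,b_3)=(b_1,b_2,b_3,b_1,b_2,b_3)$. By Theorem~\ref{Null Tangle consequences}, applied via Theorem~\ref{sphere NTP} and \cite[Theorem~3.2]{unitorus}, the full-dimensional cones of $\F_B$ and $\F_{B'}$ are precisely the cones $C_\cT$ and $C_{\cT'}$ indexed by tagged triangulations.

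The core step is to verify that for each pair $(\cT,\cT')$ matched via Corollary~\ref{bij:torus_tri}, one has $s(C_{\cT'})=C_\cT\cap U$. I would first dispatch the base case $(T_0,T'_0)$ by direct computation: Theorem~\ref{thm: shear} (via Item~4 of Proposition~\ref{prop: positive shear} together with the symmetries in the proof of Theorem~\ref{thm: shear}) produces the six shear coordinate vectors $\b(T_0,\kappa(\gamma_j))$ spanning $C_{T_0}$; imposing the defining equations $x_1=x_4$, $x_2=x_5$, $x_3=x_6$ of $U$ on a nonnegative combination $\sum \alpha_j \b(T_0,\kappa(\gamma_j))$ forces $\alpha_1=\alpha_4$, $\alpha_2=\alpha_5$, $\alpha_3=\alpha_6$, and the resulting 3-dimensional cone matches $C_{T'_0}$ under $r$ via the analogous shear-coordinate computation in $\T-1$ from \cite{unitorus}. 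For a general Farey-1 triple, the argument extends by a $GL_2(\Z)$ change of basis on $\R^2-\Z^2$ taking $(0,1,\infty)$ to the triple of $\cT$; this basis change induces compatible permutations on the six arcs of $T_0$ and the three arcs of $T'_0$ (elements of $S_6$ and $S_3$) that are intertwined by $r$, in the spirit of the arguments in the proofs of Propositions~\ref{support_closed} and~\ref{sphere_to_torus}.

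To conclude, I would argue that these matched cones $s(C_{\cT'})$ exhaust the 3-dimensional cones of $(\F_B)_U$: completeness of $\F_{B'}$ (via Theorem~\ref{Null Tangle consequences}, since the support of $\F_\rationals(T'_0)$ contains $\Q^3$ and is closed in $\reals^3$) gives $\bigcup_{\cT'}s(C_{\cT'})=U$, so an interior point $v$ of any 3-dimensional cone $K$ of $(\F_B)_U$ lies in the relative interior (in $U$) of some $s(C_{\cT'})$; the fan axioms for $(\F_B)_U$ then force $K=s(C_{\cT'})$. Hence full-dimensional cones of $(\F_B)_U$ correspond bijectively to those of $\F_{B'}$ via $r$, and the lower-dimensional cones and adjacencies are determined by the fan axioms, completing the isomorphism. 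The main obstacle is the general Farey-1 case of the base correspondence: it requires careful tracking of how the $GL_2(\Z)$ basis change acts by compatible permutations on $\reals^6$ and $\reals^3$, a compatibility stemming from the underlying 2-to-1 pairing of the six arcs of $T_0$ into three parallel pairs that projects onto the three arcs of $T'_0$.
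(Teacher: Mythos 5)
Your core step---matching the type-I (full-dimensional) cones of $\F_{B'}$ with the slices $C_{\cT}\cap U$ of type-I cones of $\F_B$, by a base-case computation plus a $GL_2(\integers)$ change of basis---is a reasonable variant of the paper's approach (the paper instead works directly with the shear-coordinate formulas of Item~4 of Proposition~\ref{prop: positive shear} and the coordinate permutations, proving only the inclusion $C_{\kappa(\cT')}\subseteq r(C_{\kappa(\cT)}\cap U)$ and upgrading to equality later). However, the conclusion step of your proposal has a genuine gap, and it concerns precisely the maximal cones that are \emph{not} full-dimensional.

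The claim ``completeness of $\F_{B'}$ gives $\bigcup_{\cT'}s(C_{\cT'})=U$'' is false. The mutation fan $\F_{B'}$ of the once-punctured torus has $2$-dimensional \emph{maximal} cones, namely the cones $C_\Lambda$ where $\Lambda$ contains a closed allowable curve (see \cite[Theorem~7.1]{unitorus} and the analogous type-VII phenomenon in Proposition~\ref{prop: maximal curves} and the discussion following it). Being maximal means these $2$-dimensional cones are not faces of any $3$-dimensional cone, so their relative interiors are disjoint from $\bigcup_{\cT'}C_{\cT'}$. Thus the $3$-dimensional cones do \emph{not} cover $\reals^3$, and the union $\bigcup_{\cT'}s(C_{\cT'})$ is a proper subset of $U$. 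Correspondingly, $(\F_B)_U$ has maximal cones of dimension less than $3$ coming from slices of the $5$-dimensional type-VII cones of $\F_B$, and these are not faces of your matched $3$-dimensional cones. Your closing remark that ``the lower-dimensional cones and adjacencies are determined by the fan axioms'' does not hold in this setting: a non-simplicial, infinite fan is not determined by its full-dimensional cones alone, precisely because lower-dimensional maximal cones exist. To close the gap you need, as the paper does, a separate verification that each $2$-dimensional maximal cone $C'$ of $\F_{B'}$ (determined by a slope, a marked point, and a tagging) satisfies $C'\subseteq r(C\cap U)$ for the corresponding type-VII cone $C$ of $\F_B$, and then an argument---such as the paper's observation that $\F_\rationals(T'_0)$ covers all of $\reals^3$ except the irrational rays in $x_1+x_2+x_3=0$, combined with rationality of all cones in question---to upgrade all the inclusions (type-I and type-VII simultaneously) to equalities.
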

\begin{proof}[Proof sketch]
Recall from Corollary~\ref{bij:torus_tri} the correspondence between triangulations $\mathcal T'$ of $\T-1$ and type-I triangulations $\mathcal T$ of $\Sp-4$ with all tags the same.
Recall also the map $\kappa$ defined in Section~\ref{defs sec}.
(We will use the same symbol $\kappa$ for this map in both surfaces.) 
We first check that, for each $\mathcal T$, the full-dimensional cones $C_{\kappa(\mathcal T)}$ of $\F_\rationals(T_0)$ and $C_{\kappa(\mathcal T')}$ of $\F_\rationals(T'_0)$ satisfy $C_{\kappa(\mathcal T')}\subseteq r(C_{\kappa(\mathcal T)}\cap U)$.
First suppose that $\mathcal T$ has plain tags, and consider two curves $\kappa(\gamma)$ and $\kappa(\gamma')$ in $\kappa(\mathcal T)$ with the same slope $b/a$.
If $b/a$ is nonnegative, then one of these curves has shear coordinates as described in Item 4 of Proposition~\ref{prop: positive shear}.
The shear coordinates of the other are obtained by applying the coordinate permutation $(14)(25)(36)$.
The sum of the two is contained in $U$ and maps, by $r$, to the shear coordinates for the curve of the same slope in $\kappa(\mathcal T')$, as recorded in \cite[Proposition~5.1]{unitorus}.
If $b/a$ is negative, then by applying $(123)(456)$ to shear coordinates of curves in $\Sp-4$ and $(123)$ to shear coordinates of curves in $\T-1$, we come to the same conclusion.
If $\mathcal T$ has notched tags, we argue similarly.
We see that each $C_{\kappa(\cT')}$
 has its extreme rays contained in $r(C_{\kappa(\mathcal T)}\cap U)$, so $C_{\kappa(\mathcal T')}\subseteq r(C_{\kappa(\mathcal T)}\cap U)$.

Recall from Proposition~\ref{prop: maximal curves} that a type-VII ($5$-dimensional) maximal cone in $\F_B$ is determined by a slope, by a choice of two marked points, and by a choice of tagging at the other two points.
A $2$-dimensional maximal cone of $\F_{B'}$ is determined by a slope and by a tagging at the marked point.
Similarly to the previous paragraph, we argue, using \cite[Proposition~5.1]{unitorus} and \cite[Theorem~7.1]{unitorus}, that each two-dimensional maximal cone $C'$ of $\F_{B'}$ is contained in $r(C\cap U)$, where $C$ is any $5$-dimensional (i.e.\ type-VII) maximal cone of $(\F_B)$ with the same slope as $C'$ and with both taggings in $C$ matching the tagging of $C'$.

Since $\F_\rationals(T'_0)$ covers all of $\reals^3$, except for the irrational rays in the plane given by $x_1+x_2+x_3=0$, and since all the cones in question are rational, we conclude that each containment $C_{\kappa(\mathcal T')}\subseteq r(C_{\kappa(\mathcal T)}\cap U)$ or $C'\subseteq r(C\cap U)$ is actually equality.
The result follows.
\end{proof}

\section*{Acknowledgments}
The authors thank
Dylan Allegretti,
Bill Floyd,
Vladimir Fock,
Alexander Goncharov,
Allen Hatcher, and
Lee Mosher
for helpful responses to email queries.

An extended abstract \cite{FPSAC} of this paper appeared in the proceedings of the 27th International Conference on Formal Power Series and Algebraic Combinatorics (\mbox{FPSAC} 2015) in Daejeon, South Korea, July 6--10, 2015.
\label{sec:ack}

\end{document}